\newcommand{\cal}{\mathcal}
\def\epsilon{\varepsilon}
\def\phi{\varphi}
\newcommand{\R}{\mathbb R}
\newcommand{\Z}{\mathbb Z}
\newcommand{\N}{\mathbb N}
\def\strutdepth{\dp\strutbox}
\def \ss{\strut\vadjust{\kern-\strutdepth \sss}}
\def \sss{\vtop to \strutdepth{
\baselineskip\strutdepth\vss\llap{$\diamondsuit\;\;$}\null}}
\def\strutdepth{\dp\strutbox}
\def \sst{\strut\vadjust{\kern-\strutdepth \ssss}}
\def \ssss{\vtop to \strutdepth{
\baselineskip\strutdepth\vss\llap{$\spadesuit\;\;$}\null}}
\def\strutdepth{\dp\strutbox}
\def \ssh{\strut\vadjust{\kern-\strutdepth \sssh}}
\def \sssh{\vtop to \strutdepth{
\baselineskip\strutdepth\vss\llap{$\heartsuit\;\;$}\null}}
\def\qed{\hfill\rlap{$\sqcup$}$\sqcap$\par}
\def\strutdepth{\dp\strutbox}
\def \ss{\strut\vadjust{\kern-\strutdepth \sss}}
\def \sss{\vtop to \strutdepth{
\baselineskip\strutdepth\vss\llap{$\diamondsuit\;\;$}\null}}
\def\strutdepth{\dp\strutbox}
\def \sst{\strut\vadjust{\kern-\strutdepth \ssss}}
\def \ssss{\vtop to \strutdepth{
\baselineskip\strutdepth\vss\llap{$\spadesuit\;\;$}\null}}
\def\qed{\hfill\rlap{$\sqcup$}$\sqcap$\par}
\newtheorem{thm}{Theorem}[section]
\newtheorem{cor}[thm]{Corollary}
\newtheorem{lem}[thm]{Lemma}
\newtheorem{prop}[thm]{Proposition}
\theoremstyle{definition}
\newtheorem{defn}[thm]{Definition}
\newtheorem{convention-defn}[thm]{Convention-Definition}
\newtheorem{defn-rem}[thm]{Definition-Remark}
\newtheorem{rem}[thm]{Remark}
\newtheorem{quest}[thm]{Question}
\numberwithin{equation}{section}
\begin{document}

 \author[Martin Lustig]{Martin Lustig} 
 \address{\tt 
Aix Marseille Universit\'e, CNRS, Centrale Marseille, I2M UMR 7373,
13453  Marseille, France
}
 \email{\tt Martin.Lustig@univ-amu.fr} 
 
 \author[Caglar Uyanik]{Caglar Uyanik} 
 \address{\tt Department of Mathematics, University of Illinois at 
 Urbana-Champaign, 1409 West Green Street, Urbana, IL 61801, USA
 \newline \href{https://sites.google.com/site/caglaruyanik/}{https://sites.google.com/site/caglaruyanik/}} 
 \email{\tt cuyanik2@illinois.edu}

\title[Perron-Frobenius for reducible substitutions]{Perron-Frobenius theory and frequency convergence for reducible substitutions}

\begin{abstract}
We prove a general version of the classical Perron-Frobenius convergence property for reducible matrices. We then 
apply this result to reducible substitutions and use it to produce limit frequencies for factors and hence invariant measures on the associated subshift. 

The analogous results 
are well known for primitive substitutions and 
have found many applications, but for reducible substitutions the tools provided here were so far missing from the theory.
\end{abstract} 

\subjclass[2010]{37B10} 
 


\maketitle
 

\section{Introduction}
\label{introduction}

One of the most investigated dynamical systems, with important applications in many areas, are subshifts that are generated by substitutions.  If the substitution is primitive, then a number of well known and powerful tools are available, most notably the Perron-Frobenius theorem for primitive matrices, which ensures that the subshift in question is uniquely ergodic.

On the other hand, substitutions with reducible incidence matrices 
have only recently received some serious attention (see Remark \ref{moulinette}
and Remark \ref{Schneider}).
One reason for this neglect is that the standard methods, employed in the primitive case 
for 
analyzing the dynamics of
such substitutions and their incidence matrices,
uses tools that so far didn't have analogues in reducible case.  
It is the purpose of this paper to provide these tools, and thus 
to extend the basic theory from the primitive to the reducible case.

We concentrate on substitutions $\zeta$ which are {\em expanding}, i.e. 
$\zeta$ 
does not act periodically or erasing
on any subset of the given alphabet
(for our notation and terminology on substitutions see \S \ref{substitutions}).

\smallskip

Every non-negative irreducible square matrix has a power which is a block diagonal matrix, where every diagonal block is primitive.
The classical Perron-Frobenius theorem asserts that, for any primitive matrix $M$ and for any non-negative column vector $\vec{v}\neq\vec{0}$, 
the sequence of vectors  $M^t \vec v$,
after normalization, converges to a
positive eigenvector of $M$, and that the latter is unique up to rescaling.

In analogy 
with  
the above
facts, 
in section \ref{prelims*} we 
introduce
the  
{\em PB-Frobenius form} for matrices, 
which is set up so that, 
up to conjugation with a permutation matrix, every non-negative integer square 
matrix has  
a positive power which is in PB-Frobenius form. We 
prove the following 
convergence
result
for matrices in PB-Frobenius form;
its proof 
spans 
sections \ref{primitive-Frobenius}--\ref{PB-convergence}
and 
can be read independently 
from 
the rest of the paper.

\begin{thm}
\label{thmI}
Let $M$ be a non-negative integer $(n\times n)$-matrix which is in PB-Frobenius form.
Assume that none of the coordinate vectors is mapped by a positive power of $M$ to itself or to $\vec 0$.

Then for any non-negative 
column 
vector $\vec{v} \neq \vec 0$ 
there exists a ``limit vector'' 
\[
\vec{v}_{\infty}=\lim_{t \to \infty} \frac{1}{\|M^t \vec v\|} M^t \vec v
\neq \vec 0\, ,
\]
and $\vec{v}_{\infty}$ 
is an 
eigenvector of $M$. 
\end{thm}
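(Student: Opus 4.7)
The plan is to prove Theorem~\ref{thmI} by induction on the number of primitive diagonal blocks in the PB-Frobenius decomposition of $M$. In the base case $M$ is (a permutation conjugate of) a single primitive matrix, and the classical Perron--Frobenius theorem supplies convergence of $M^t \vec v / \|M^t \vec v\|$ to the Perron eigenvector for every non-negative $\vec v \neq \vec 0$. The hypothesis that no coordinate vector is sent by a positive power of $M$ to itself or to $\vec 0$ kills the degenerate $1 \times 1$ blocks with entry $0$ or $1$, so every primitive block has Perron eigenvalue strictly greater than $1$; this keeps the normalizations well defined throughout the induction.

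For the inductive step I would peel off one extremal diagonal block of the PB-Frobenius form, writing
\[
M = \begin{pmatrix} A & B \\ 0 & C \end{pmatrix},
\]
with $C$ a single primitive block at the bottom of the block partial order and $A$ itself in PB-Frobenius form with one fewer diagonal block. Decomposing $\vec v = (\vec v_A, \vec v_C)$ accordingly gives
\[
M^t \vec v = \begin{pmatrix} A^t \vec v_A + \sum_{k=0}^{t-1} A^k B\, C^{t-1-k} \vec v_C \\ C^t \vec v_C \end{pmatrix}.
\]
The bottom coordinate is handled directly by Perron--Frobenius for $C$, yielding a normalized limit $\vec u_C$ with eigenvalue $\lambda_C$. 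The inductive hypothesis applied to $A$ handles $A^t \vec v_A$ (when $\vec v_A \neq \vec 0$), producing a limit direction $\vec u_A$ which is an eigenvector of $A$ with some associated growth rate $\lambda_A = \lim_t \|A^{t+1}\vec v_A\|/\|A^t \vec v_A\|$.

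The heart of the proof is a case analysis comparing $\lambda_A$ and $\lambda_C$. When $\lambda_A > \lambda_C$ and $\vec v_A \neq \vec 0$, the term $A^t \vec v_A$ dominates, the convolution sum is of lower order, and the limit lives in the $A$-block. When $\lambda_A < \lambda_C$, the convolution sum satisfies
\[
\sum_{k=0}^{t-1} A^k B\, C^{t-1-k} \vec v_C \;\sim\; \lambda_C^t \cdot \vec w
\]
by a geometric comparison of ratio $\lambda_A / \lambda_C < 1$, where $\vec w$ is an explicit vector built from $B$ and the Perron data of $C$; the overall limit is then an eigenvector of $M$ with eigenvalue $\lambda_C$ supported in both blocks. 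In either case, once convergence of the normalized orbit is in hand, the standard identity
\[
M \vec v_\infty = \lim_{t \to \infty} \frac{\|M^{t+1} \vec v\|}{\|M^t \vec v\|}\, \vec v_\infty
\]
forces the limit to be an eigenvector of $M$ with eigenvalue equal to the limit of the norm ratios.

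The main obstacle is the resonant case $\lambda_A = \lambda_C$, where the convolution sum produces a polynomial factor of the form $t \cdot \lambda^t$ that survives normalization, and more generally the bookkeeping required when several primitive blocks in the hierarchy share the same Perron eigenvalue. There the geometric-series estimate fails and one needs a Jordan-form-style resonance analysis to show that the polynomially corrected growth still produces a well-defined limit direction lying in the appropriate Perron eigenspace. I expect most of sections \ref{primitive-Frobenius}--\ref{PB-convergence} to be devoted to making this resonance precise and to verifying that the non-periodic, non-erasing hypothesis on coordinate vectors rules out cancellations that would collapse the limit to zero.
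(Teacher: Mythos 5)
Your induction and three-way eigenvalue comparison is essentially the engine of the paper's argument (Theorem \ref{GPFT} and Proposition \ref{induction-step}, with the convolution identity $M^t\vec v_0=\lambda^t\vec v_t+\sum\lambda^{k}M^{t-k-1}\vec u_{k+1}$ playing the role of your sum $\sum A^kBC^{t-1-k}\vec v_C$). But there is a genuine gap at the very start: you never handle the power-bounded diagonal blocks that are not primitive, and your reason for dismissing them is wrong. The expanding hypothesis only forces the \emph{minimal} diagonal blocks to be primitive (Definition-Remark \ref{expanding-matrix}(3)); non-minimal PB blocks are allowed, they need not be $1\times 1$ (e.g.\ a $2\times 2$ cyclic permutation block is power bounded and irreducible but not primitive), and they are in fact the raison d'\^etre of the PB-Frobenius form — the middle block of every blow-up matrix is of this kind (Lemma \ref{third-block}). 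For such a block $C$, neither the classical Perron--Frobenius theorem nor your base case applies: $C^t\vec v_C$ oscillates and does not converge in direction, so the peeling step breaks. Concretely, for $M=\left(\begin{smallmatrix}2&1&0\\0&0&1\\0&1&0\end{smallmatrix}\right)$ your induction has nothing to say about the bottom $2\times2$ block, yet $M$ satisfies all hypotheses of the theorem.

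The paper's fix is a two-stage argument that your proposal is missing entirely. First one passes to a power $M_1=M^k$ in \emph{primitive} Frobenius form (Lemma \ref{Frobenius-powers*}, Remark \ref{iterating-to-get-primitive-F}), where every former PB block has been refined into degenerate $1\times1$ blocks; the induction with the three cases is then carried out for $M_1$ (this is where your "resonance" case $\lambda_A=\lambda_C$ lives, and note the general normalization is $\lambda^t t^d$ for a chain of $d+1$ blocks of equal eigenvalue, not just $t\lambda^t$). This only yields convergence of $M^{kt}\vec v/\|M^{kt}\vec v\|$ along each residue class mod $k$, and the second stage — reconciling these $k$ potentially different limits — is a separate, non-trivial step: one shows (Proposition \ref{PB-eigenvectors}) that $M$ permutes the principal eigenvectors of $M_1$, and that the expanding hypothesis forces every principal block of $M_1$ to sit inside a primitive (not PB) block of $M$, so the permutation is trivial and every relevant eigenvector of $M_1$ is already an eigenvector of $M$. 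Without this reconciliation your normalized sequence is only known to accumulate on a finite $M$-orbit, not to converge. Also note that primitive blocks automatically have Perron eigenvalue $>1$ (a primitive integer matrix of size $\ge 2$ has a positive power with row sums $\ge 2$); the expanding hypothesis is not needed for that, but it is used essentially in the reconciliation step and to guarantee $\|M^t\vec v\|\to\infty$.
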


In symbolic dynamics 
the classical Perron-Frobenius theorem plays a 
key role, when applied to the incidence matrix $M_\zeta$ of a primitive substitution $\zeta$:
Any finite word $w$ in the language 
$\cal L_\zeta$ 
associated to 
$\zeta: A \to A^*$ has the property that for any letter $a_i$ of the alphabet $A$, the number 
$|\zeta^t(a_i)|_w$ 
of occurrences of $w$ 
as a factor 
in $\zeta^t(a_i)$, normalized by the word length $|\zeta^t(a_i)|$, converges to a well defined {\em limit frequency}. The latter can 
be used to 
define the unique 
(up to scaling) invariant measure on the subshift 
$\Sigma_\zeta$ 
defined by 
the primitive substitution 
$\zeta$.

\medskip

The purpose of this paper is to 
establish 
the analogous results for expanding reducible substitutions $\zeta$. The key observation (Proposition \ref{blow-up-Frobenius}) here is that for any $n \geq 2$  the classical 
{\em level $n$ blow-up substitution} $\zeta_n$ (based on a derived alphabet $A_n$ which contains all factors 
$w_i \in \cal L_\zeta$
of length 
$|w_i| = n$ 
as ``blow-up letters'') has 
incidence 
matrix 
$M_{\zeta_n}$ 
in PB-Frobenius form, assuming that 
the 
incidence matrix 
$M_\zeta$ is in PB-Frobenius form. 

Combining Proposition \ref{blow-up-Frobenius} with Theorem \ref{thmI} 
gives the following
(see 
Lemma \ref{letter-convergence} and 
Proposition 
\ref{theoremB}):

\begin{thm}
\label{theoremII}
Let $\xi$ be an expanding substitution 
on 
a finite alphabet $A$. 
Then there exist a positive power 
$\zeta=\xi^{s}$ 
such that
for 
any 
non-empty 
word 
$w \in A^*$ 
and any letter $a_i \in A$ 
the 
limit 
frequency
\[
\lim_{t\to\infty}\frac{|\zeta^{t}(a_i)|_w}{|\zeta^{t}(a_i)|}
\]
exists. 
\end{thm}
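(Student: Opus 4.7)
The plan is to reduce the statement to Theorem \ref{thmI}, applied to the incidence matrix of the level-$n$ blow-up substitution of a suitable power $\zeta=\xi^s$ of $\xi$, where $n=|w|$, and then to extract the desired frequencies from the resulting vector convergence.

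First, I would invoke the observation (recalled just before the statement of Theorem \ref{thmI}) that every non-negative integer square matrix admits a positive power which, after conjugation by a permutation matrix, is in PB-Frobenius form. Absorbing this permutation by a harmless relabelling of $A$, I fix $s\geq 1$ so that the incidence matrix $M_\zeta$ of $\zeta=\xi^s$ is in PB-Frobenius form. The expanding property of $\xi$ is inherited by $\zeta$, and crucially this choice of $s$ depends only on $\xi$ (and not on $w$ or $a_i$), which matches the quantifier structure in the conclusion.

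Next, for a given non-empty $w\in A^*$, set $n=|w|$ and pass to the level-$n$ blow-up substitution $\zeta_n$ on the derived alphabet $A_n\ni w$. By Proposition \ref{blow-up-Frobenius} the matrix $M_{\zeta_n}$ is again in PB-Frobenius form, so to invoke Theorem \ref{thmI} it remains to check that no coordinate vector of $M_{\zeta_n}$ is mapped to itself or to $\vec 0$ by a positive power of $M_{\zeta_n}$. This should follow from the expanding hypothesis on $\zeta$, but establishing that this property passes from $\zeta$ to $\zeta_n$ --- i.e.\ that no blow-up letter is periodic or ``erased'' under iteration of $\zeta_n$ --- requires careful bookkeeping of how length-$n$ factors are created and destroyed when $\zeta$ is iterated, and I expect this to be the main technical point of the argument.

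Finally, for each letter $a_i\in A$ I would choose $T$ large enough that $|\zeta^T(a_i)|\geq n$ and define a non-zero vector $\vec v^{(i)}\in\Z^{A_n}$ whose $u$-coordinate counts the occurrences of $u$ as a length-$n$ factor of $\zeta^T(a_i)$. By the design of the blow-up substitution, the $u$-coordinate of $M_{\zeta_n}^t\vec v^{(i)}$ is then $|\zeta^{t+T}(a_i)|_u$, while its $\ell^1$-norm equals $|\zeta^{t+T}(a_i)|-n+1$. Theorem \ref{thmI} produces a non-zero limit vector $\vec v^{(i)}_\infty$, and since $|\zeta^t(a_i)|\to\infty$ by the expanding hypothesis, the additive correction $n-1$ is absorbed into the normalisation; the $w$-coordinate of $\vec v^{(i)}_\infty$ is therefore precisely the claimed limit frequency, which completes the argument once the lemmas \ref{letter-convergence} and \ref{theoremB} packaging these translations are in hand.
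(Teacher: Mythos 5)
Your overall route --- fix a power $s$ depending only on $\xi$ so that $M_\zeta$ is in PB-Frobenius form, blow up to level $n=|w|$, apply Proposition \ref{blow-up-Frobenius} and Theorem \ref{thmI}, and translate back --- is the paper's route. But your final translation step contains a genuine error. If $\vec v^{(i)}$ is the level-$n$ occurrence vector of $\zeta^T(a_i)$, then $M_{\zeta_n}^t\vec v^{(i)}$ is the occurrence vector of $\zeta_n^t(W')$, where $W'\in A_n^*$ is the blow-up reading of $\zeta^T(a_i)$. By construction $|\zeta_n^t(W')|=|\zeta^t(X)|$, where $X$ is $\zeta^T(a_i)$ with its last $n-1$ letters deleted, so $\zeta_n^t(W')$ records only those length-$n$ factors of $\zeta^{t+T}(a_i)$ that start in the first $|\zeta^t(X)|$ positions. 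Hence your identities $(M_{\zeta_n}^t\vec v^{(i)})_u=|\zeta^{t+T}(a_i)|_u$ and $\|M_{\zeta_n}^t\vec v^{(i)}\|=|\zeta^{t+T}(a_i)|-n+1$ hold only at $t=0$; for $t\geq 1$ the discrepancy is $|\zeta^t(S)|$ (with $S$ the deleted suffix of length $n-1$), which tends to infinity and is in general of the same order of magnitude as $|\zeta^{t+T}(a_i)|$ --- already for primitive $\zeta$ the deleted part has positive asymptotic density. So the boundary effect is not an additive constant absorbed by the normalisation: Theorem \ref{thmI} gives you the limiting frequency of $w$ in a truncation of $\zeta^{t+T}(a_i)$ whose complement does not become negligible, and in the reducible setting nothing identifies that limit with the one you want, nor does its existence imply existence of the desired limit. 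The paper sidesteps this by feeding Theorem \ref{thmI} the single coordinate vector $\vec e_{w'}$ of one blow-up letter $w'\in A_n$ beginning with $a_i$: then $\|M_{\zeta_n}^t\vec e_{w'}\|=|\zeta_n^t(w')|=|\zeta^t(a_i)|$ exactly, and $\bigl||\zeta_n^t(w')|_w-|\zeta^t(a_i)|_w\bigr|\leq n$ is a genuinely bounded boundary error, because $\zeta^t(a_i)$ is a prefix of $\zeta^t(w')$.

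A smaller point: the step you flag as ``the main technical point'' --- that the expanding property passes from $\zeta$ to $\zeta_n$ --- is a one-line consequence of the identity $|\zeta_n^t(x_1\ldots x_n)|=|\zeta^t(x_1)|$. Some power of $\zeta$ sends every letter to a word of length at least $2$, hence the same power of $\zeta_n$ sends every blow-up letter to a word of length at least $2$, which is exactly the definition of expanding; no bookkeeping of how factors are created or destroyed is required.
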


As a consequence 
of Theorem \ref{theoremII} we 
obtain 
 - precisely as in the primitive case -
for any $a_i \in A$
an invariant measure 
on the subshift $\Sigma_\zeta$ defined by the substitution $\zeta$.
However, contrary to the primitive case, in general 
this invariant measure 
will heavily depend on the chosen letter $a_i$, 
see Question \ref{yet-open}. 
We prove (see 
Remark \ref{invariant-measure+}):

\begin{cor}
\label{invariant-measure1}
For any expanding substitution $\zeta: A \to A^*$ and any letter 
$a_i \in A$ there is a well defined invariant measure $\mu_{a_i}$ on the substitution subshift $\Sigma_\zeta$. 
For any 
non-empty $w \in A^*$ and the associated 
cylinder ${\rm Cyl}_w \subset \Sigma_\zeta$ 
(see subsection \ref{invmeasures})
the value of $\mu_{a_i}$
is
given, 
after possibly raising $\zeta$ to a suitable power according to Theorem \ref{theoremII},
by the limit frequency 
\[
\mu_{a_i}({\rm Cyl}_w) = \lim_{t\to\infty}\frac{|\zeta^{t}(a_i)|_w}{|\zeta^{t}(a_i)|}.
\]
\end{cor}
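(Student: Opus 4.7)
The plan is to use Theorem~\ref{theoremII} to define a set function on cylinder sets of $\Sigma_\zeta$ and then verify that this function extends, via Carath\'eodory, to a shift-invariant Borel probability measure on $\Sigma_\zeta$. After replacing $\zeta$ by the power $\zeta^s$ supplied by Theorem~\ref{theoremII}, I would set, for every non-empty $w \in A^*$,
\[
\mu_{a_i}({\rm Cyl}_w) \;:=\; \lim_{t \to \infty} \frac{|\zeta^{t}(a_i)|_w}{|\zeta^{t}(a_i)|}\, ,
\]
where the denominator tends to $\infty$ because $\zeta$ is expanding, and the limit exists by Theorem~\ref{theoremII}. The empty word corresponds to the whole space, which is normalized to have measure $1$ (equivalent to the sum of the $\mu_{a_i}({\rm Cyl}_a)$ over $a \in A$, itself an instance of the identity below).

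The heart of the argument is the Kolmogorov compatibility for cylinders of different lengths. For any word $w$ and any sufficiently large $t$, every occurrence of $w$ as a factor in $\zeta^{t}(a_i)$ is followed by exactly one letter $a \in A$, except possibly for a single occurrence sitting at the very end of $\zeta^{t}(a_i)$. This yields
\[
\Bigl|\, |\zeta^{t}(a_i)|_w \;-\; \sum_{a \in A} |\zeta^{t}(a_i)|_{wa} \,\Bigr| \;\leq\; 1\, ,
\]
and an analogous inequality with $aw$ in place of $wa$. Dividing by $|\zeta^{t}(a_i)|$ and letting $t \to \infty$ produces both
\[
\mu_{a_i}({\rm Cyl}_w) \;=\; \sum_{a \in A} \mu_{a_i}({\rm Cyl}_{wa}) \;=\; \sum_{a \in A} \mu_{a_i}({\rm Cyl}_{aw})\, .
\]
The first equality is the Kolmogorov consistency required by Carath\'eodory; the second, combined with the first, is exactly shift-invariance at the level of cylinders.

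Carath\'eodory's extension theorem then produces a unique Borel probability measure $\mu_{a_i}$ on the full shift $A^{\mathbb Z}$, and shift-invariance on the algebra of cylinders propagates to shift-invariance on all Borel sets by uniqueness of the extension. The support is automatically contained in $\Sigma_\zeta$: for any $w \notin \mathcal L_\zeta$ one has $|\zeta^{t}(a_i)|_w = 0$ for every sufficiently large $t$, whence $\mu_{a_i}({\rm Cyl}_w) = 0$, and $\Sigma_\zeta$ is precisely the intersection of the complements of such forbidden cylinders. The replacement of $\zeta$ by a power is harmless because $\mathcal L_{\zeta^s} = \mathcal L_\zeta$ and hence $\Sigma_{\zeta^s} = \Sigma_\zeta$.

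The main delicate point is making sure that the $O(1)$ boundary error in the counting identity is genuinely negligible against $|\zeta^{t}(a_i)|$. This is exactly where the expanding hypothesis enters: it guarantees, via the coordinate-vector condition in Theorem~\ref{thmI} applied to $M_\zeta$, that $|\zeta^{t}(a_i)| \to \infty$ for every letter $a_i$, so that the error washes out uniformly in $w$ of bounded length. Once this is in hand, the remaining verifications are standard adaptations of the primitive-case construction.
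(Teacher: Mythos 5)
Your proposal is correct and follows essentially the same route as the paper: the core step is the $\pm 1$ counting argument showing that the limit frequencies satisfy the Kirchhoff/consistency identities $\mu_{a_i}({\rm Cyl}_w)=\sum_a\mu_{a_i}({\rm Cyl}_{wa})=\sum_a\mu_{a_i}({\rm Cyl}_{aw})$, with the error washed out because the expanding hypothesis forces $|\zeta^t(a_i)|\to\infty$, exactly as in the paper's Proposition \ref{limit-frequencies-Kirchhoff}. The only difference is cosmetic: you carry out the Carath\'eodory extension and the support argument explicitly, where the paper delegates this standard correspondence between weight functions and invariant measures to a citation.
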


Although there are various generalizations of the classical Perron-Frobenius theorem for primitive matrices in the literature, we could not find one with the convergence statement as in Theorem \ref{thmI}, which is needed for our applications. 
Perron-Frobenius theory and its generalizations are relevant in many more branches of mathematics than just symbolic dynamics, including applied linear algebra, and some areas of analysis and probability theory (see for instance \cite{AGN},\cite{BSS} and \cite{Le}). 
We expect that Theorem \ref{thmI} will find useful applications in other contexts. 

Our proof of Theorem \ref{thmI} uses only standard methods from linear algebra and is hence accessible to mathematicians from all branches. The reader interested only in Theorem \ref{thmI} may go straight to section \ref{primitive-Frobenius} and start reading from there. 
The sections \ref{primitive-Frobenius} to \ref{PB-convergence} 
are organized as follows:

After setting up some definitions and terminology in section \ref{primitive-Frobenius}, we state Theorem \ref{GPFT}, a 
slight 
strengthening of Theorem \ref{thmI}. 
To stay within the realm if this paper 
we phrase 
Theorem \ref{GPFT} 
for integer matrices, but this assumption is 
not 
used in the proof of Theorem \ref{GPFT}.

The proof of Theorem \ref{GPFT} is done by induction over the number of  primitive diagonal blocks in a suitable power of the given matrix $M$, and the induction step itself (Proposition \ref{induction-step}) reveals a crucial amount of information about the 
dynamics on the non-negative cone $\R_{\geq 0}^n$ induced by iterating the map which is defined by the matrix $M$.  
The proof of 
Proposition \ref{induction-step}, which involves a careful (and hence a bit lengthy) 3-case analysis, is assembled in section \ref{theproof}. 
In 
section \ref{primitive-F-eigenvectors} some results about the eigenvectors of such a matrix $M$ are 
shown to be 
direct consequence of Proposition~\ref{induction-step}.

\medskip
\noindent
{\em Acknowledgements:} We would like to thank Ilya Kapovich and Chris Leininger for their interest and helpful discussions related to this work. We also want to thank Arnaud Hilion and Nicolas B\'edaride for several useful comments and remarks. Finally, we would like thank Jon Chaika for helpful conversations and useful suggestions. 

Both authors gratefully acknowledge support from U.S. National Science Foundation grants DMS 1107452, 1107263, 1107367 "RNMS: GEometric structures And Representation varieties" (the GEAR Network)." 
The first author was partially supported by the French research grant ANR-2010-BLAN-116-01 GGAA. 
The second author was partially supported by the NSF grants of Ilya Kapovich (DMS 1405146) and Christopher J. Leininger (DMS 1510034).

\section{Non-negative matrices in PB-Frobenius form}
\label{prelims*}

A non-negative
integer
$(n\times n)$-matrix $M$ is called \emph{irreducible} if for any $1\le i,j\le n$ there exists an exponent $k=k(i,j)$ such that the $(i, j)$-th 
entry of $M^k$ 
is positive.
The matrix $M$ is called \emph{primitive} if the 
exponent $k$ can be chosen independent of $i$ and $j$. 
The matrix $M$ is called \emph{reducible} if $M$ is not irreducible. 
Since in some places in the literature the $(1 \times 1)$-matrix with entry 0 is also accepted as ``primitive'' 
we will be explicit whenever this issue comes up.

It is a well known fact for non-negative matrices that every irreducible matrix has a power which is, 
up to conjugation with a permutation matrix, a block diagonal matrix where every diagonal block is 
a primitive square matrix.

For the purposes of 
our results on reducible substitutions presented in the next section 
the following terminology turns out to be 
crucial:

\begin{defn}
\label{power-bounded}
A non-negative integer square matrix $M$ is called \emph{power bounded ($PB$)} if the entries of $M^{t}$ are uniformly bounded for all $t\ge1$.
\end{defn}

Let $M$ be a non-negative integer square matrix as considered above, and assume  
that $M$ is partitioned into matrix blocks which along the diagonal are square matrices.

\begin{defn}
\label{Frobenius-form*}
(a)
The matrix $M$ is in {\em PB-Frobenius form} if $M$ is a lower diagonal block matrix where every diagonal block is either primitive or power bounded.

\smallskip
\noindent
(b)
If $M$ is in PB-Frobenius form, then 
the special case of a diagonal block which is a $(1 \times 1)$-matrix with entry $1$ or $0$ will be counted as PB block and not as primitive block, although technically speaking such a  
block
could also be considered as ``primitive''.\end{defn}

\begin{lem}
\label{Frobenius-powers*}
Every non-negative square matrix $M$ has a positive power $M^t$ which
is in PB-Frobenius form
(with respect to some block decomposition of $M$).
\end{lem}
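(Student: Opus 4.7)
The plan is to combine the standard Frobenius normal form, based on the strongly connected components of the digraph associated to $M$, with the classical cyclic decomposition of powers of irreducible non-negative matrices.

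First, I would consider the directed graph $\Gamma_M$ on vertex set $\{1,\ldots,n\}$ with an edge from $i$ to $j$ whenever $M_{ji}>0$. Its strongly connected components form a partially ordered set under reachability, and a compatible linear ordering of vertices determines a permutation matrix $P$ for which $P^{-1}MP$ is block lower triangular with diagonal blocks $B_1,\ldots,B_r$ in bijection with the SCCs. Each $B_i$ is either (i) a $1\times 1$ zero block, arising from a singleton SCC with no self-loop, or (ii) an irreducible matrix in the sense recalled at the beginning of Section \ref{prelims*}. I may replace $M$ by $P^{-1}MP$ and assume from the outset that $M$ has this block shape.

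Next, I would invoke the classical cyclic structure theorem for irreducible non-negative matrices: for each irreducible block $B_i$ there is an integer $d_i \geq 1$, namely its period, and a permutation of the indices internal to $B_i$, such that $B_i^{d_i}$ becomes block diagonal with each diagonal piece either primitive in the traditional sense or a $1\times 1$ matrix with entry $\geq 1$ (the latter happens when a cyclic class has size one; the entry is positive because it counts closed paths of length $d_i$ at that vertex in the irreducible graph of $B_i$). Setting $t = \mathrm{lcm}(d_1,\ldots,d_r)$, the block lower triangular shape of $M$ is inherited by $M^t$, and each of its diagonal blocks $B_i^t = (B_i^{d_i})^{t/d_i}$ is itself block diagonal with pieces of the same type, using that a positive power of a primitive matrix remains primitive; the $1\times 1$ zero blocks of the original $M$ of course stay zero in $M^t$.

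Finally, I would conjugate $M^t$ by the block-diagonal permutation matrix that simultaneously implements the internal rearrangements inside each $B_i$. This preserves the block lower triangular pattern and produces a matrix whose refined diagonal blocks are each either primitive or a $1\times 1$ matrix with entry $0$ or $1$, which is precisely the PB-Frobenius form of Definition \ref{Frobenius-form*}, the entries $0$ and $1$ qualifying as PB blocks by part (b) of that definition. The only genuinely non-trivial ingredient is the cyclic structure theorem quoted in the second step; everything else amounts to tracking how block lower triangularity is inherited by powers and preserved under internal permutation conjugation, and so the main ``obstacle'' is really just organizational rather than mathematical.
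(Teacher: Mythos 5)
Your proposal is correct and follows essentially the same route as the paper: reduce to the Frobenius normal form with zero or irreducible diagonal blocks, then raise to a power (the lcm of the periods) so that each irreducible block splits into primitive diagonal pieces, refining the block structure accordingly. The paper simply cites these two classical facts without spelling out the cyclic decomposition and the internal permutations as you do.
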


\begin{proof}
This is an immediate consequence of the well known normal form for non-negative matrices, which says that, up to conjugation with a permutation matrix, $M$ is a lower block diagonal matrix with all diagonal blocks are either zero or irreducible. It suffices now to rise $M$ to a power such that every diagonal block matrix block is itself a block diagonal matrix with primitive matrix blocks, and to refine the block structure of $M$ accordingly.
\end{proof}

As is often done when working with non-negative matrices, we will use 
in this paper 
as norm on $\R^n$ the $\ell_1$-norm, i.e. 
$$\big{\|} \sum a_i \vec e_i  \big{\|} = \sum |a_i|$$
for all $a_1 , \ldots , a_n \in \R$.

In section \ref{PB-convergence}
we prove the 
convergence result
for matrices in PB-Frobenius form 
stated in Theorem~\ref{thmI}, 
which is crucial for our extension of the classical theory for primitive substitutions to 
the much more general class of 
expanding 
substitutions in the next 
section. 
It turns out 
(see Proposition \ref{blow-up-Frobenius})
that the class of PB-Frobenius matrices is precisely the class of matrices for which the blow-up technique known from primitive matrices can be extended naturally. 



For practical purposes we formalize the condition that is used as assumption in 
Theorem \ref{thmI}:

\begin{defn-rem}
\label{expanding-matrix}
(1) An integer square matrix 
$M$ is called {\em expanding} if 
none of the coordinate vectors $\vec e_i$
is mapped by a positive power of $M$ to itself 
or to $\vec 0$.

\smallskip
\noindent
(2)
It is easy to see that this is equivalent to the condition that for any non-negative column vector $\vec v \neq \vec 0$ the length of the iterates 
satisfy
\[
\|M^t \vec v\| \to \infty
\]
for $t \to \infty$.

\smallskip
\noindent
(3)
Let $M$ be in PB-Frobenius form.  The statement that ``$M$ is expanding'' is equivalent to the requirement that no minimal 
diagonal matrix block $M_{i, i}$ 
of $M$ is PB. Here {\em minimal} refers to the partial order on blocks 
as defined in section \ref{primitive-Frobenius}.
Thus ``$M_{i, i}$ is minimal'' means that
$M \vec v$ has non-zero coefficients only in 
the coordinates corresponding to $M_{i, i}$,
if the same assertion is true for $\vec v$.
\end{defn-rem}

\section{Dynamics of expanding substitutions}
\label{dynamics-section}
\subsection{Basics of substitutions}
\label{substitutions}
A \emph{substitution} $\zeta$ on a finite set $A=\{a_1,a_2,\ldots a_n\}$ (called the {\em alphabet}) of {\em letters} $a_i$ is given by associating to every $a_i \in A$ a finite 
word $\zeta(a_i)$ in the {alphabet} $A$:
$$
a_i \mapsto \zeta(a_i) = x_1 \ldots x_n \quad \quad {\rm (with} \quad x_i \in A)
$$
This defines
a map from $A$ to $A^{*}$, by which we denote the free monoid over the alphabet $A$. The map $\zeta$ extends to a well defined monoid endomorphism $\zeta: A^* \to A^*$ which is usually denoted by the same symbol as the substitution.

The combinatorial length of $\zeta(a_i)$, denoted by $|\zeta(a_i)|$, is the number of letters in the word $\zeta(a_i)$. 
We call a substitution $\zeta$ 
\emph{expanding} if there exists $k\ge1$ such that for every $a_i\in A$
one has
\[
|\zeta^{k}(a_i)|\ge2.
\]
It follows directly that this is equivalent to stating that $\zeta$ is {\em non-erasing}, i.e. none of the $\zeta(a_i)$ is equal to the empty word, and that $\zeta$ doesn't act periodically on any subset of the generators.

Let $A^{\mathbb{Z}}$ be the set of all biinfinite words $\ldots x_{-1}x_{0}x_{1}x_2\ldots $ in $A$, 
endowed with the 
product
topology.  It is equipped with the {\em shift operator}, which shifts the indices of any biinfinite word by $-1$, and is continuous. 

Any substitution $\zeta$ defines a {\em language} $\cal L_\zeta \subset A^*$ which consists of all words $w \in A^*$ that appear as a factor of $\zeta^{k}(a_i)$ for some $a_i \in A$ and some $k\ge 0$. 
Here {\em factor} means any finite subword of a word in $A^*$ or $A^\Z$, referring to the multiplication in the free monoid $A^*$.

Furthermore, $\zeta$ defines a {\em substitution subshift}, i.e. a  subshift $\Sigma_\zeta \subset A^\Z$ which is 
the space of all biinfinite words in $A$ which have the property that any finite factor belongs to $\cal L_\zeta$.

\smallskip

A substitution $\zeta$ on $A$ is called \emph{irreducible} if for all $1\le i,j\le n$, there exist $k=k(i,j)\ge1$ such that $\zeta^{k}(a_j)$ contains the letter $a_i$. It is called {\em primitive} if $k$ can be chosen independent of $i,j$. A substitution is called \emph{reducible} if it is not irreducible. 
Note that any irreducible substitution $\zeta$ (and hence any primitive $\zeta$) is expanding, except if $A = \{a_1\}$ and $\zeta(a_1) = a_1$.

Given a substitution $\zeta:A\to A^*$, there is an associated \emph{incidence matrix} $M_\zeta$ defined as follows: The $(i,j)^{th}$ entry of $M_{\zeta}$ is the number of occurrences of the letter $a_i$ in the word $\zeta(a_j)$. Note that the matrix $M_\zeta$ is a non-negative integer square matrix. It is easy to verify that 
an expanding 
substitution $\zeta$ is irreducible (primitive) if and only if the matrix $M_\zeta$ is irreducible (primitive), as defined in section \ref{prelims*}.

It also follows directly that $M_{\zeta^t} = (M_\zeta)^t$ for any exponent $t \in \N$. Furthermore, the incidence matrix $M_\zeta$ is expanding 
(see Definition-Remark \ref{expanding-matrix})
if and only if the substitution $\zeta$ is expanding. 
In particular, we obtain directly from Lemma \ref{Frobenius-powers*}:

\begin{lem}
\label{PB-power-substitution}
Every expanding substitution $\zeta$ has a positive power $\zeta^t$ such that the incidence matrix $M_{\zeta^t}$ is PB-Frobenius
and expanding.
\qed
\end{lem}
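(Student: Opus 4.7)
The plan is to combine Lemma \ref{Frobenius-powers*} with two facts that have already been recorded just above the statement: that $M_{\zeta^t} = (M_\zeta)^t$ for every $t \in \N$, and that a substitution $\zeta$ is expanding if and only if its incidence matrix $M_\zeta$ is expanding in the sense of Definition-Remark \ref{expanding-matrix}. So the proof should be essentially a one-line deduction, and the only thing that really needs to be checked separately is that ``expanding'' is preserved under passing to a positive power of the matrix.

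First I would apply Lemma \ref{Frobenius-powers*} to the incidence matrix $M_\zeta$, obtaining an exponent $t \ge 1$ such that $(M_\zeta)^t$ is in PB-Frobenius form (after possibly conjugating by a permutation matrix, which corresponds to relabeling the alphabet). Using the identity $M_{\zeta^t} = (M_\zeta)^t$, this immediately gives that $M_{\zeta^t}$ itself is in PB-Frobenius form.

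Next I would verify that $M_{\zeta^t}$ is expanding. Here I would invoke the equivalent characterization (2) of Definition-Remark \ref{expanding-matrix}: a non-negative integer square matrix $N$ is expanding if and only if $\|N^s \vec v\| \to \infty$ as $s \to \infty$ for every non-negative column vector $\vec v \neq \vec 0$. Since $\zeta$ is expanding, $M_\zeta$ has this property, and hence so does the subsequence $(M_\zeta)^{ts} = (M_{\zeta^t})^s$, which shows $M_{\zeta^t}$ is expanding as required.

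The only subtlety — and the closest thing to an obstacle — is just the brief sanity check that ``expanding'' is inherited by positive powers; the direct first-principles version would note that if $(M_\zeta^t)^k \vec e_i \in \{\vec 0,\vec e_i\}$, then $M_\zeta^{tk} \vec e_i \in \{\vec 0,\vec e_i\}$, contradicting the assumption on $M_\zeta$. Beyond that, the lemma is a direct splicing together of Lemma \ref{Frobenius-powers*} and the correspondence between substitutions and their incidence matrices.
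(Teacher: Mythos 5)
Your proposal is correct and matches the paper's intent exactly: the paper states this lemma with a bare \qed as a direct consequence of Lemma \ref{Frobenius-powers*} together with the identities $M_{\zeta^t}=(M_\zeta)^t$ and the equivalence of ``expanding'' for $\zeta$ and $M_\zeta$ recorded just above it. Your extra sanity check that the expanding property passes to positive powers is the right (and only) detail the paper leaves implicit.
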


\subsection{Frequencies of letters} 

For any letter $a_i\in A$ and any word $w\in A^*$ we denote the number of occurrences of the letter $a_i$ in the word $w$ by 
$|w|_{a_i}$.

We observe directly from the definitions that 
the resulting 
{\em occurrence vector}
$\vec v(w) := (|w|_{a_i})_{a_i \in A}$ satisfies:
\begin{equation}
\label{3.1}
M_\zeta \cdot \vec v(w) = \vec v({\zeta(w)})
\end{equation}

The statement of the following lemma, for the special case of primitive substitutions, is a well known classical tool in symbolic dynamics (see \cite[Proposition 5.8]{Q}).

\begin{lem} 
\label{letter-convergence}
Let $\zeta: A^*\to A^*$ be an expanding substitution. Then, up to replacing $\zeta$ by a positive power, for any $a \in A$ and any $a_i\in A$ the 
limit frequency
\[
f_{a_i}(a):= \lim_{t\to\infty}\frac{|\zeta^{t}(a)|_{a_i}}{|\zeta^{t}(a)|}
\]
exists. 
The resulting limit 
frequency 
vector $\vec v_{\infty}(a) := (f_{a_i}(a))_{a_i \in A}$ is an eigenvector of the matrix $M_\zeta$.
\end{lem}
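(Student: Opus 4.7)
The plan is to apply Theorem \ref{thmI} to the sequence of occurrence vectors obtained by iterating the substitution on a single letter. By Lemma \ref{PB-power-substitution}, after replacing $\zeta$ by a suitable positive power we may assume that its incidence matrix $M_\zeta$ is in PB-Frobenius form and is expanding in the sense of Definition-Remark \ref{expanding-matrix}. This is precisely the ``up to replacing $\zeta$ by a positive power'' clause in the statement of the lemma, and it is the only preparatory step required.

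Next, I would identify the objects whose convergence is being asserted with the iterates of $M_\zeta$ on the coordinate vectors. For any letter $a \in A$, the occurrence vector satisfies $\vec v(a) = \vec e_a$, and iterating equation (\ref{3.1}) $t$ times yields
\[
M_\zeta^t\, \vec e_a \;=\; \vec v(\zeta^t(a))\, .
\]
Since we are working with the $\ell_1$-norm, the norm of the occurrence vector is exactly the combinatorial length of the associated word:
\[
\|\vec v(\zeta^t(a))\| \;=\; \sum_{a_i \in A} |\zeta^t(a)|_{a_i} \;=\; |\zeta^t(a)|\, .
\]
In particular, the $a_i$-th coordinate of the normalized vector $M_\zeta^t \vec e_a / \|M_\zeta^t \vec e_a\|$ is exactly the frequency ratio $|\zeta^t(a)|_{a_i}/|\zeta^t(a)|$ whose limit we wish to compute.

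Now I would invoke Theorem \ref{thmI} directly, applied to $M := M_\zeta$ and to the non-negative non-zero vector $\vec v := \vec e_a$. Since $M_\zeta$ is in PB-Frobenius form and expanding, all hypotheses of Theorem \ref{thmI} are satisfied, so the limit
\[
\vec v_\infty(a) \;=\; \lim_{t \to \infty} \frac{M_\zeta^t\, \vec e_a}{\|M_\zeta^t\, \vec e_a\|} \;=\; \lim_{t \to \infty} \frac{\vec v(\zeta^t(a))}{|\zeta^t(a)|}
\]
exists, is non-zero, and is an eigenvector of $M_\zeta$. Reading off the $a_i$-th coordinate of the identity above gives existence of the limit frequency $f_{a_i}(a)$ and identifies the vector $(f_{a_i}(a))_{a_i \in A}$ with $\vec v_\infty(a)$, which concludes the proof.

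Since all the heavy lifting has already been carried out in Theorem \ref{thmI} and Lemma \ref{PB-power-substitution}, there is no real obstacle here; the only mild subtlety is the bookkeeping point that the $\ell_1$-norm of the occurrence vector equals the word length, which is what lets us translate the abstract normalization $\|M_\zeta^t \vec e_a\|$ into the concrete denominator $|\zeta^t(a)|$ appearing in the limit-frequency expression.
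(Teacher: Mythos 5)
Your proposal is correct and follows essentially the same route as the paper's own proof: reduce to PB-Frobenius form via Lemma \ref{PB-power-substitution}, apply Theorem \ref{thmI} to the occurrence vector $\vec v(a)=\vec e_a$, and use the identity $\|M_\zeta^t \vec v(a)\| = |\zeta^t(a)|$ coming from equality (\ref{3.1}) and the $\ell_1$-norm. No issues.
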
 

\begin{proof} 
By Lemma \ref{PB-power-substitution} we can assume that,
up to replacing $\zeta$ by a positive power, the incidence matrix $M_\zeta$ is in PB-Frobenius form
and expanding.
Thus, Theorem \ref{thmI} applied to the 
occurrence vector $\vec v({a})$ 
gives the required result, where we note that $\| M_\zeta^t\vec v({a}) \| = \|\vec v(\zeta^t(a))\| = |\zeta^{t}(a)|$ is a direct consequence of 
equality (\ref{3.1}) and 
the definition of the norm in section \ref{prelims*}.
\end{proof}

Notice that, as for primitive substitutions, 
it follows that the sum of the coefficients of the limit 
frequency 
vector $\vec v_{\infty}(a)$ is equal to $1$.  However, contrary to the primitive case, for a reducible substitution $\zeta$ the limit 
frequency 
vector $\vec v_\infty(a)$ will in general depend on the choice of $a \in A$.

\begin{rem}
\label{new-remark}
From the statement of Lemma \ref{letter-convergence} and from equality (\ref{3.1}) one obtains directly that $f'_{a_i}(a):= \underset{t\to\infty}{\lim}\frac{|\zeta^{t+1}(a)|_{a_i}}{|\zeta^{t}(a)|}$ exists and that it gives rise to a vector $\vec v'_{\infty}(a) := (f'_{a_i}(a))_{a_i \in A}$ which satisfies $\vec v'_{\infty}(a) = M_\zeta \vec v_{\infty}(a)$. Since $\vec v_{\infty}(a)$ is an eigenvector of $M_\zeta$, with eigenvalue that satisfies $\lambda_a > 1$, we deduce from $f_{a_i}(a):= \underset{t\to\infty}{\lim}\frac{|\zeta^{t}(a)|_{a_i}}{|\zeta^{t}(a)|} = \underset{t\to\infty}{\lim}\frac{|\zeta^{t+1}(a)|_{a_i}}{|\zeta^{t+1}(a)|}$ that
$$
\lim_{t\to\infty}\frac{|\zeta^{t+1}(a)|}{|\zeta^{t}(a)|} = \lambda_a \,.
$$
\end{rem}

\subsection{Frequencies of factors via the level $n$ blow-up substitution} 

Recall from section \ref{substitutions} that for any substitution $\zeta$ we denote by $\cal L_\zeta$ the subset of $A^*$ which consists of all factors of any iterate $\zeta^{k}(a_i)$, for any letter $a_i \in A$.
We say that $w$ is 
{\em used} by $a_i$ if $w$ appears as a factor in some $\zeta^{k}(a_i)$.

We see from Lemma \ref{letter-convergence} that the frequencies of letters are encoded in the incidence matrix $M_\zeta$; 
however, 
this matrix doesn't give us 
any information about the frequencies of factors. 
In order to understand the asymptotic behavior of frequencies of factors one has to appeal to a classical ``blow-up'' technique for the substitution (see for instance \cite{Q}). We now give a quick introduction to this blow-up technique, which will be crucially used below.
 
Let $n\ge2$, 
and denote by 
$A_n = A_n(\zeta)$ the set of all 
words in $\cal L_\zeta$ 
of length $n$.
We consider $A_n$ as the new alphabet, and define a substitution $\zeta_n$ on $A_n$ as follows: 

For $w=a_1 a_2\ldots a_{n}\in A_n$, consider the word 
\[
\zeta(a_1a_2\ldots a_{n})=x_1x_2\ldots x_{|\zeta(a_1)|}x_{|\zeta(a_1)|+1}\ldots x_{|\zeta(w)|}. 
\]

Define 
\[
\zeta_n(w)=(x_1\ldots x_{n})(x_2\ldots x_{n+1})\ldots(x_{|\zeta(a_1)|}\ldots x_{|\zeta(a_1)|+ n-1}).
\]

That is, $\zeta_{n}(w)$ is defined as the ordered list of first $|\zeta(a_1)|$ 
factors of length $n$ of the word $\zeta(w)$. As before, $\zeta_n$ extends to $A_n^*$ and 
$A_n^{\mathbb{Z}}$, by concatenation. 
Here a word $w'\in A_n^*$ of length $k$ is an ordered list of $k$ words of length $n$ in $A^*$. Namely,
\[
w'=w_0w_1\ldots w_k
\]
such that $|w_i|=n$ for all $i=1,\ldots, k$. 
We call $\zeta_n$ the \emph{level $n$ blow-up substitution} for $\zeta$. From this definition
it follows directly 
that $(\zeta_n)^{t}=(\zeta^{t})_n$, hence we will omit the parentheses. 
Observe that for $w=a_1 a_2\ldots a_{n}\in A_{n}$, we have $|\zeta_{n}(w)|=|\zeta(a_1)|$, from which 
it follows that for an expanding substitution $\zeta$ the blow-up substitution $\zeta_n$ is expanding, for any $n \geq 2$.

One of the classical tools that is 
used to understand irreducible 
substitutions and their invariant measures is the following:

\begin{lem}\cite[Lemma 5.3]{Q}
\label{blow-up-primitive}
Let $\zeta:A^*\to A^*$ be a substitution such that $M_\zeta$ is primitive. Then for any $n\ge 1$, the incidence matrix $M_{\zeta_n}$ for the level $n$ blow-up substitution $\zeta_n$ is again primitive. 
\end{lem}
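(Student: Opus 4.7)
My plan is to prove directly from the definitions that, under the primitivity hypothesis on $M_\zeta$, there exists a single exponent $K$ such that for every pair $u,v \in A_n$ the iterated blow-up $\zeta_n^K(u)$ contains $v$ as an $A_n$-letter. The starting point is the identity $\zeta_n^K = (\zeta^K)_n$ recorded in the excerpt: unwinding the definition of $\zeta_n$, the $A_n$-word $\zeta_n^K(u)$ is precisely the ordered list of length-$n$ factors of $\zeta^K(u)$ whose starting positions run over $1, 2, \ldots, |\zeta^K(a_1)|$, where $a_1$ is the first letter of $u$. Consequently it is enough to arrange that $v$ occurs as a length-$n$ factor entirely within the initial block $\zeta^K(a_1)$ of $\zeta^K(u)$.

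To locate such an occurrence of $v$ I would combine two ingredients. First, since $v \in A_n \subseteq \cal L_\zeta$, by the definition of $\cal L_\zeta$ there exist $m_v \ge 0$ and a letter $a_{\ell_v} \in A$ such that $v$ is a factor of $\zeta^{m_v}(a_{\ell_v})$. Second, primitivity of $M_\zeta$ gives an exponent $K_0$ such that $\zeta^k(a)$ contains every letter $b \in A$ whenever $k \ge K_0$ and $a \in A$. Chaining these, for any letter $a \in A$ and any $k \ge K_0 + m_v$, I can write $\zeta^k(a) = \zeta^{m_v}(\zeta^{k-m_v}(a))$; since $\zeta^{k-m_v}(a)$ contains the letter $a_{\ell_v}$, the word $\zeta^k(a)$ contains $\zeta^{m_v}(a_{\ell_v})$ as a factor and, in particular, contains $v$ as a length-$n$ factor.

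Setting $K := K_0 + \max\{m_v : v \in A_n\}$ — which is finite since $A_n$ is finite — produces a uniform exponent. For any $u,v \in A_n$, the length-$n$ word $v$ then appears as a factor of $\zeta^K(a_1)$, where $a_1$ is the first letter of $u$; its starting position satisfies $1 \le p \le |\zeta^K(a_1)| - n + 1 \le |\zeta^K(a_1)|$, so it is one of the length-$n$ factors enumerated by $\zeta_n^K(u)$. Hence $v$ occurs as an $A_n$-letter of $\zeta_n^K(u)$, and the independence of $K$ from the pair $(u,v)$ shows that $M_{\zeta_n}$ is primitive with primitivity exponent at most $K$.

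I do not expect a serious obstacle here: the argument is essentially combinatorial bookkeeping around the definition of the blow-up substitution and the crucial observation that $\zeta_n^K(u)$ only records the factors of $\zeta^K(u)$ that start inside the first block $\zeta^K(a_1)$. The only marginal case to mention in passing is the degenerate primitive substitution on a single letter with $\zeta(a_1)=a_1$, where $A_n = \{a_1^n\}$ and the conclusion is tautological; in every other case the inequality $|\zeta^K(a_1)| \ge n$ needed for $v$ to fit inside $\zeta^K(a_1)$ is automatic, because $\zeta^K(a_1)$ already contains $v$ as a factor.
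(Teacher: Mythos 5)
Your proof is correct and takes essentially the same approach as the paper: the paper itself only cites \cite[Lemma 5.3]{Q} for this statement, but your chaining of a uniform primitivity exponent $K_0$ with a bounded appearance time $m_v$ for each $v\in A_n$, so that $v$ lands inside the prefix $\zeta^K(a_1)$ of $\zeta^K(u)$ and hence among the first $|\zeta^K(a_1)|$ length-$n$ factors enumerated by $\zeta_n^K(u)$, is exactly the argument the paper uses for its generalization in Lemma~\ref{middle-block}. (One cosmetic point: in the degenerate case $\zeta(a_1)=a_1$ one has $A_n=\emptyset$ for $n\ge 2$ rather than $\{a_1^n\}$, so the claim is vacuous there; this does not affect your argument.)
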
 

We show that the analogue is true for 
expanding 
substitutions with 
possibly
reducible incidence matrices:

\begin{prop} 
\label{blow-up-Frobenius}
Let $\zeta:A^*\to A^*$ be a substitution such that $M_\zeta$ is in PB-Frobenius form. Then for any $n\ge1$, the incidence matrix $M_{\zeta_n}$ for the level $n$ blow-up substitution $\zeta_n$ is again in PB-Frobenius form. 
\end{prop}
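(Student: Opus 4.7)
The plan is to exhibit a block decomposition of $A_n$ with respect to which $M_{\zeta_n}$ is lower block triangular and every diagonal block is primitive or PB. Let $A = B_1 \sqcup \cdots \sqcup B_k$ be the partition of $A$ given by the PB-Frobenius form of $M_\zeta$, with partial order $\preceq$ defined by $B_r \preceq B_s$ whenever some letter of $B_r$ appears in some iterate $\zeta^t(a)$ with $a \in B_s$; in particular, for $a \in B_s$ every letter of $\zeta(a)$ lies in some $B_r \preceq B_s$, and each diagonal block $M_{B_s}$ is primitive or PB by hypothesis. I would then decompose $A_n$ into the strongly connected components $\mathcal{C}_1, \ldots, \mathcal{C}_m$ under the relation ``$w \to w'$ iff $w'$ is a factor of some $\zeta_n^t(w)$'', which automatically makes $M_{\zeta_n}$ lower block triangular with respect to this decomposition. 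To each $w \in A_n$ I would associate its \emph{height}
\[
h(w) := \{B_s : B_s \text{ is }\preceq\text{-maximal among the blocks containing some letter of } w\},
\]
and show using the partial-order structure that $h$ is invariant on each such SCC.

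The core of the argument is a dichotomy on each diagonal block $M_{\mathcal{C}_j}$ according to the common height $H := h(\mathcal{C}_j)$. If every $B \in H$ is a PB block of $M_\zeta$, then the number of letters from $B$ appearing in $\zeta^t(w)$ is bounded uniformly in $t$, since these arise only from the PB sub-block $M_B^t$ applied to the $B$-letters of $w$. Each $w' \in \mathcal{C}_j$ must contain at least one letter from some $B \in H$ (otherwise $h(w')$ would be strictly lower than $H$), so the occurrences of $w'$ as factors of $\zeta^t(w)$ are anchored at the boundedly many positions of such letters, forcing the entries of $M_{\mathcal{C}_j}^t$ to be uniformly bounded, hence $M_{\mathcal{C}_j}$ is PB. If instead some $B \in H$ is primitive, the $B$-letters proliferate in $\zeta^t(w)$ at a geometric rate, and I would combine primitivity of $M_B$ with Lemma \ref{blow-up-primitive} applied to a suitable primitive structure on $B$, together with a mixing argument on the ``decorations'' coming from strictly lower blocks, to show that for some uniform $t$ every $w' \in \mathcal{C}_j$ appears as a factor of $\zeta_n^t(w)$ for every $w \in \mathcal{C}_j$; this gives primitivity of $M_{\mathcal{C}_j}$.

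The main obstacle is making the primitive case rigorous: the restriction of $\zeta$ to $B$ is not literally a substitution on $B$ because $\zeta(a)$ for $a \in B$ may contain letters from strictly lower blocks, so Lemma \ref{blow-up-primitive} does not apply directly. A natural resolution is an induction on the number of blocks of $M_\zeta$: first pass to the quotient substitution on $B$ obtained by deleting letters in strictly lower blocks (to which Lemma \ref{blow-up-primitive} does apply), and then reintroduce the lower-block decorations using the inductive hypothesis. Alternatively, one may argue directly that every factor $w' \in \mathcal{C}_j$ of height $H$ eventually appears around each $B$-letter in $\zeta^t(w)$, exploiting primitivity of $M_B$ together with the monotonicity of $h$ under $\zeta_n$.
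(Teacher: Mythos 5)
There is a genuine gap, and it lies in the dichotomy you base on the height $h(\mathcal{C}_j)$. The quantity that governs the diagonal block containing a word $w = x_1\ldots x_n$ is not the set of maximal blocks meeting $w$ but the block of the \emph{first letter} $x_1$: by definition $\zeta_n^t(w)$ has length $|\zeta^t(x_1)|$, and all but at most $n-1$ of its components are factors of $\zeta^t(x_1)$ itself (only the last $n-1$ straddle into $\zeta^t(x_2\ldots x_n)$). Consider a word $w$ whose first letter $x_1$ lies in a lower $\zeta$-invariant subalphabet $A'$ but which contains (or whose orbit is tied to) a letter of a higher \emph{primitive} block $B$. Then $h(w)\ni B$, so your dichotomy sends this SCC to the ``primitive case,'' and you would try to show that height-$H$ words of $A_n$ proliferate in $\zeta_n^t(w)$. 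They do not: the $B$-letters do proliferate in $\zeta^t(w)$ as a word in $A^*$, but $\zeta_n^t(w)$ only records length-$n$ factors starting inside $\zeta^t(x_1)\in A'^*$, so at most $n-1$ of its components lie outside the lower blow-up subalphabet, independently of $t$. The diagonal block on such words is therefore power bounded (with entries at most $n$) and in general not primitive, so the conclusion you are trying to prove for these SCCs is false, not merely hard. One can also check that your SCCs never mix these ``transition'' words with words whose first letter is in the top block (the reachability relation only goes one way), so the misclassification is not rescued by the SCC structure.

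This is precisely the point where the paper's proof diverges from yours: it partitions $A_n\smallsetminus A'_n$ into \emph{top-used} words ($x_1\in A\smallsetminus A'$) and \emph{top-transition} words ($x_1\in A'$), proves the top-transition diagonal block is always PB with entries bounded by $n$ (Lemma \ref{third-block}), and only then runs your intended dichotomy — but on the top-used block alone, where it is governed by the top diagonal block of $M_\zeta$ (Lemmas \ref{non-growing-top} and \ref{middle-block}). Your PB case is essentially sound as far as it goes (bounding $|\zeta_n^t(w)|_{w'}$ by $|\zeta^t(w)|_{w'}$ and anchoring occurrences at the boundedly many $H$-letters), and your instinct to induct on the number of blocks of $M_\zeta$ matches the paper's final assembly. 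To repair the argument you should replace the height invariant by the first-letter invariant, add the top-transition class as a third (always PB) type of diagonal block, and then your primitive-case factor-tracing argument only needs to handle words whose first letter lies in the primitive top block — which also dissolves the ``main obstacle'' you flag, since no quotient substitution on $B$ is needed.
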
 

The proof of this proposition, 
which is one of the main results of this paper, 
requires several lemmas; we assemble all of 
 them in the next subsection.

\subsection{The proof of Proposition \ref{blow-up-Frobenius}}
\label{blow-up}

Let $\zeta: A^* \to A^*$ be a substitution as before, and let $A' \subset A$ be a {\em $\zeta$-invariant subalphabet}, i.e. we assume that $\zeta(a') \in A'^*$ for any $a' \in A'$, where we identify the free monoid $A'^*$ with the submonoid of $A^*$ that is generated by the letters from $A'$. 

For most applications one may chose $A'$ to be a maximal proper $\zeta$-invariant subalphabet of $A$, although formally we don't need this assumption.
The terminology below comes from thinking of $A \smallsetminus A'$ as representing the ``top stratum'' for the reducible substitution $\zeta$.

For any $n \geq 2$ and for the level $n$ blow-up substitution $\zeta_n: A_n \to A_n^*$ we consider the subalphabet $A'_n \subset A_n$ which is given by all words $w = x_1 \ldots x_n$ with $x_i \in A'$
that are used by some $a'_i \in A'$.
From the $\zeta$-invariance of $A'$ it follows directly that $A'_n$ is $\zeta_n$-invariant.

We now partition the letters $w$ of $A_n \smallsetminus A'_n$, i.e. 
$w = x_1 \ldots x_n$ is a word of length $n$ 
which 
is 
used by some $a_i \in A \smallsetminus A'$ but not by any $a'_i \in A'$,
into two classes:
\begin{enumerate}
\item
$w \in A_n \smallsetminus A'_n$ is {\em top-used} if $x_1 \in A \smallsetminus A'$.
\item
$w \in A_n \smallsetminus A'_n$ is {\em top-transition} if 
$x_1 \in A'$.
\end{enumerate}

\begin{rem}
\label{transition-invariance}
From the definition of the map $\zeta_n$ and from the $\zeta$-invariance of $A'$ it follows directly that the top-transition words together with $A'_n$ constitute a $\zeta_n$-invariant subalphabet of $A_n$. Indeed,
recall that for any $w = x_1 \ldots x_n \in A_n$ the image $\zeta^t_n(w)$ is a word $w_1 w_2 \ldots w_r$ in $A_n$, with $r = r(t) = |\zeta^t(x_1)|$ such that $w_k$ is the prefix of length $n$ of the word obtained from $\zeta^t(w)$ by deleting the first $k-1$ letters.
Thus it follows that the first $r - (n-1)$ of the words $w_k$
are factors of $\zeta^t(x_1)$, and that the last $n-1$ of the words $w_k$ have at least their first letter in $\zeta^t(x_1)$.
Hence, 
if $x_1 \in A'$, then 
the first $r - (n-1)$ of the words $w_k$ 
belong to $A'_n$, and the last $n-1$ words $w_k$ are all top-transition.
\end{rem} 

We now consider the incidence matrices $M_\zeta$ and $M_{\zeta_n}$:  From the $\zeta$-invariance of $A'$ it follows that after properly reordering the letters of $A$ the matrix $M_\zeta$ is a $2\times 2$ lower triangular block matrix, with $M_{\zeta|_{A'}}$ as lower diagonal block.  Similarly, $M_{\zeta_n}$ is a $3 \times 3$ lower triangular block matrix, with $M_{\zeta_n|_{A'_n}}$ as bottom diagonal block. 
The top-used edges form the top diagonal block, and top-transition edges form the 
middle 
diagonal block.

The arguments given below work also for the special case where $A'$ is empty; in this case the bottom diagonal block of $M_\zeta$ and the two bottom diagonal blocks of $M_{\zeta_n}$ 
have size $0 \times 0$, so that both, $M_\zeta$ and $M_{\zeta_n}$, consist de facto of a single matrix block.

\begin{lem}
\label{third-block}
The middle diagonal block of $M_{\zeta_n}$ as defined above is power bounded.
\end{lem}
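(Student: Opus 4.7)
The plan is to show that every entry of $B^{t}$ is bounded by $n-1$ uniformly in $t\ge 1$, where $B$ denotes the middle diagonal block of $M_{\zeta_n}$; this is exactly power-boundedness in the sense of Definition \ref{power-bounded}.

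First I would verify that the $3\times 3$ block decomposition of $M_{\zeta_n}$ with rows and columns indexed respectively by top-used, top-transition, and $A'_n$ letters is genuinely block lower-triangular. The $A'_n$-column has zeros above the diagonal by $\zeta_n$-invariance of $A'_n$, while the top-transition-column has a zero in the top-used row because, by Remark \ref{transition-invariance}, the top-transition letters together with $A'_n$ form a $\zeta_n$-invariant subalphabet. Block lower-triangularity is preserved under multiplication, so the middle diagonal block of $(M_{\zeta_n})^{t}=M_{\zeta_n^t}$ is exactly $B^{t}$.

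The heart of the argument is to feed Remark \ref{transition-invariance} with an arbitrary exponent $t$. For any top-transition letter $w=x_1\ldots x_n$ and any $t\ge 1$, the remark says that $\zeta_n^{t}(w)$ is a word of length $|\zeta^{t}(x_1)|$ whose first $|\zeta^{t}(x_1)|-(n-1)$ letters lie in $A'_n$ and whose last $n-1$ letters are top-transition. Hence the total number of top-transition letters occurring in $\zeta_n^{t}(w)$ is at most $n-1$, independently of $t$ and of $w$. Since the $(v,w)$-entry of $B^{t}$ records the number of occurrences of the top-transition letter $v$ in $\zeta_n^{t}(w)$, summing over $v$ shows that every column of $B^{t}$ has $\ell_1$-norm at most $n-1$, and in particular every entry of $B^{t}$ is at most $n-1$.

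The main — and really only — conceptual obstacle is to notice that Remark \ref{transition-invariance} is already phrased uniformly in $t$: a naive per-step iteration of a single-step version of the statement would only yield the useless bound $(n-1)^{t}$. The uniform constant $n-1$ reflects the geometric fact that the "boundary effect" producing top-transition letters is confined to a tail of length $n-1$ in $\zeta_n^{t}(w)$, independently of how far the substitution has been iterated.
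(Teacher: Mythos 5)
Your proof is correct and follows essentially the same route as the paper: apply Remark \ref{transition-invariance} with the exponent $t$ to see that for a top-transition letter $w=x_1\ldots x_n$ only the final $n-1$ letters of $\zeta_n^{t}(w)$ can lie outside $A'_n$, giving a uniform bound (you get $n-1$, the paper states $n$) on every entry of the middle diagonal block of $M_{\zeta_n^t}=(M_{\zeta_n})^t$. Your explicit check that the $3\times 3$ decomposition is block lower-triangular, so that this block really is $B^t$, is a detail the paper leaves implicit but is the right thing to verify.
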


\begin{proof}
Using the same terminology as in Remark \ref{transition-invariance} we recall that for $w = x_1 \ldots x_n \in A_n$ and $\zeta^t_n(w) = w_1 w_2 \ldots w_{|\zeta^t(x_1)|}$ it follows from $x_1 \in A'$ that
only the last $n-1$ words $w_k$ may possibly be in $A_n \smallsetminus A'_n$, but their first letter always belongs to $A'$. This shows that independently of $t$ any coefficient in the middle diagonal block of $M_{\zeta^t_n}$ is bounded above by $n$, for any $t \geq 1$.
\end{proof}

\begin{lem}
\label{non-growing-top}
If the top block diagonal matrix of $M_\zeta$ 
is power bounded, then so is the top 
block diagonal matrix of $M_{\zeta_n}$.
\end{lem}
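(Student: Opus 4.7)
The plan is to unwind the definitions and reduce the statement to the hypothesis on $M_\zeta$ by exploiting that the first letter of each ``blow-up letter'' $w_k$ produced by $\zeta_n^t$ is determined by the corresponding letter of $\zeta^t(x_1)$, where $x_1$ is the first letter of $w$.

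First I would fix a top-used $w = x_1 \ldots x_n \in A_n \smallsetminus A'_n$, so that $x_1 \in A \smallsetminus A'$. Using the description of $\zeta_n^t(w) = w_1 w_2 \ldots w_{r(t)}$ from Remark \ref{transition-invariance} (with $r(t) = |\zeta^t(x_1)|$), I note that $w_k$ is the length-$n$ prefix of $\zeta^t(w)$ starting at position $k$, and that for $k \leq |\zeta^t(x_1)|$ the first letter of $w_k$ is exactly the $k$-th letter of $\zeta^t(x_1)$. Hence $w_k$ is top-used if and only if the $k$-th letter of $\zeta^t(x_1)$ lies in $A \smallsetminus A'$.

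Next I would count these indices: the total number of top-used letters appearing in $\zeta_n^t(w)$ is
\[
\sum_{x \in A \smallsetminus A'} |\zeta^t(x_1)|_x \;=\; \sum_{x \in A \smallsetminus A'} (M_\zeta^t)_{x,\, x_1}.
\]
Since by hypothesis the top diagonal block of $M_\zeta$ is power bounded, each summand is bounded by a constant $C$ independent of $t$, so the whole sum is bounded by $|A \smallsetminus A'| \cdot C$. Because every entry $(M_{\zeta_n}^t)_{w',\, w} = |\zeta_n^t(w)|_{w'}$ with $w'$ top-used is at most this total count, each entry of the top diagonal block of $M_{\zeta_n}^t = M_{\zeta_n^t}$ is bounded uniformly in $t$.

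There is essentially no obstacle here beyond bookkeeping; the one step that requires care is checking that for the range $k = 1, \ldots, |\zeta^t(x_1)|$ the first letter of $w_k$ is genuinely a letter of $\zeta^t(x_1)$ (and not of some later $\zeta^t(x_i)$), which is immediate from the concatenation $\zeta^t(w) = \zeta^t(x_1)\zeta^t(x_2)\ldots\zeta^t(x_n)$ together with the bound on $k$.
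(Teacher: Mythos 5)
Your proof is correct and follows essentially the same route as the paper's: both arguments observe that a top-used blow-up letter occurring in $\zeta_n^t(w)$ must have its initial letter among the letters of $\zeta^t(x_1)$ lying in $A \smallsetminus A'$, and then bound the number of such letters using the power boundedness of the top diagonal block of $M_\zeta$. Your version merely makes the count explicit as a sum of matrix entries, which is a harmless rephrasing of the paper's constant $K$.
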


\begin{proof}
From the hypothesis that the top block diagonal matrix of $M_{\zeta}$ is power bounded we obtain that there is a constant $K \in \N$ such that for any letter $a_i \in A \smallsetminus A'$ and any $t \geq 0$ the number of letters $x_i$ of the factor $\zeta^t(a_i)$ that do not belong to $A'$ is bounded above by $K$. But then it follows directly that there can be at most $K$ top-used letters $y_1 \ldots y_n$  from $A_n \smallsetminus A'_n$ in any of the $\zeta_n^t(w)$ with $w = x_1 \ldots x_n \in A_n \smallsetminus A'_n$ top-used, since any such 
$y_1 \ldots y_n$ must have its initial letter $y_1$ in $\zeta^t(x_1)$, and $y_1$ must belong to $A \smallsetminus A'$.
\end{proof}

\begin{rem}
\label{observation}
From the definition of ``top-used'' and from the finiteness 
of $A_n$ it follows that there is an exponent $t \geq 0$ such that for any 
word $u \in A_n \smallsetminus A'_n$ 
(and hence in particular for any top-used $u$) 
there is a letter $a_i \in A \smallsetminus A'$ such that $u$ is a factor of the word $\zeta^{t'}(a_i)$ for some positive integer $t' \leq t$.
\end{rem}

\begin{lem}
\label{middle-block}
If the top diagonal block matrix of $M_\zeta$ is primitive, then so is the top diagonal block matrix of $M_{\zeta_n}$.
\end{lem}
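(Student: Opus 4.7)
The plan is to produce a single exponent $m$ such that every entry of the top diagonal block of $M_{\zeta_n}^m = (M_{\zeta^m})_n$ is strictly positive; this is what it means for the top block of $M_{\zeta_n}$ to be primitive. Fix top-used $u,v \in A_n \smallsetminus A'_n$ and write $v = x_1\ldots x_n$; by definition of top-used, $x_1 \in A \smallsetminus A'$. Unwinding the definition of the blow-up map, the $(u,v)$-entry of $M_{\zeta_n}^m$ counts the occurrences of $u$ as a length-$n$ factor of $\zeta^m(v)$ whose starting position lies in the prefix $\zeta^m(x_1)$. So the task reduces to locating a single uniform $m$ that realizes $u$ as a factor of $\zeta^m(x_1)$, for every pair of top-used $u,v$.

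Two ingredients are at hand. First, the hypothesis that the top block of $M_\zeta$ is primitive produces, by a standard argument (a primitive non-negative matrix with no zero row satisfies $N^k > 0$ for all $k \ge k_0$, and non-zeroness of each row of the top block follows from the $\zeta$-invariance of $A'$), an exponent $k_0$ such that for every $k \ge k_0$ and every pair $a,b \in A \smallsetminus A'$, the letter $a$ appears in $\zeta^k(b)$. Second, Remark \ref{observation} furnishes an exponent $t$ such that every top-used $u$ is a factor of $\zeta^{t'}(a)$ for some $a \in A \smallsetminus A'$ and some $1 \le t' \le t$.

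With these in hand, I would set $m := k_0 + t$ (enlarged if necessary to guarantee $|\zeta^m(b)| \ge n$ for every $b \in A \smallsetminus A'$, which is automatic for large $m$ since $\zeta$ is expanding and the top block is primitive). For top-used $u$ and $v = x_1 \ldots x_n$ as above, pick $a \in A \smallsetminus A'$ and $1 \le t' \le t$ with $u$ a factor of $\zeta^{t'}(a)$; since $m - t' \ge k_0$, the first ingredient places $a$ as a letter inside $\zeta^{m-t'}(x_1)$, and applying $\zeta^{t'}$ pushes $\zeta^{t'}(a)$ — and \emph{a fortiori} $u$ — into $\zeta^m(x_1)$ as a factor. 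Since this occurrence starts at a position $\le |\zeta^m(x_1)| - n + 1 \le |\zeta^m(x_1)|$, it is one of the letters listed in $\zeta_n^m(v)$, yielding the desired positivity.

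The main obstacle is the mismatch between ``$u$ is a factor of $\zeta^m(v)$'' and ``$u$ is a letter of $\zeta_n^m(v)$'': the latter only captures occurrences whose starting position lies in the prefix $\zeta^m(x_1)$ of $\zeta^m(v)$. This is exactly why the top-used hypothesis on $v$ (which forces $x_1 \in A \smallsetminus A'$) is indispensable, for only then can the primitivity of the top block of $M_\zeta$ be exploited to land the occurrence of $u$ inside $\zeta^m(x_1)$ — the precise window captured by the blow-up construction.
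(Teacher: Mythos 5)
Your proof is correct and follows essentially the same route as the paper: use primitivity of the top block of $M_\zeta$ to place any letter of $A\smallsetminus A'$ inside $\zeta^{k}(x_1)$ for all large $k$, combine with Remark \ref{observation} to realize each top-used word $u$ as a factor of a bounded iterate of some top letter, and compose to land $u$ inside the prefix $\zeta^m(x_1)$, which is exactly the window recorded by $\zeta_n^m(v)$. (One tiny quibble: the absence of zero rows in the top block comes from its primitivity/irreducibility, not from the $\zeta$-invariance of $A'$ — but the fact you need, $N^k>0$ for all large $k$, is standard and correct.)
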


\begin{proof}
It suffices to show that there is an integer $t_0 \geq 0$ such that for any two top-used words $w = x_1 \ldots x_n$ and $w'$ of $A_n$ the word $w'$ is a factor of the prefix 
of length $|\zeta^{t_0}(x_1)|$ of $\zeta^{t_0}(w)$.
From the assumption that the top diagonal block matrix of $M_\zeta$ is primitive we know that there is an exponent $t_1 \geq 0$ such that for any two letters $a$ and $a'$ of $A \smallsetminus A'$ the word $\zeta^{t'_1}(a')$ 
contains as factor the letter $a$, 
for any integer $t'_1 \geq t_1$. From the observation stated 
in Remark \ref{observation} we deduce
that there is an exponent $t_2 \geq 0$ such that $w'$ is a factor of $\zeta^{t'_2}(a'')$ of some letter $a''$ of $A \smallsetminus A'$, for some positive integer $t'_2 \leq t_2$. 
Thus from setting $a = x_1$ and $a' = a''$
it follows that $w'$ is a factor of $\zeta^{t_1 + t_2}(x_1)$. This shows the claim, for $t_0 = t_1 + t_2$.
\end{proof}
We now obtain as direct consequence of the above Lemmas:

\begin{proof}[Proof of Proposition \ref{blow-up-Frobenius}]
The claim that 
the incidence matrix $M_{\zeta_n}$ is in PB-Frobenius form
follows from an easy inductive argument over the number of blocks in the PB-Frobenius form of  $M_\zeta$: At each induction step the top left diagonal block of $M_\zeta$ is either primitive or power bounded, and all other blocks are assembled together in an invariant subalphabet $A'$ of the given alphabet $A$. 
Then $M_{\zeta_n}$ is considered as above as $3 \times 3$ lower triangular block matrix. For the two upper diagonal blocks the claim follows directly from the above lemmas. The bottom diagonal block is equal to $\zeta_n|_{A'_n}$, which is equal to the incidence matrix of $(\zeta|_{A'})_n$. But for $\zeta|_{A'}$ the claim can be assumed to be true via the induction hypothesis. 
\end{proof}

\subsection{Level $n$ limit frequencies}
\label{level-n-freq}
We can now state the analogue of Lemma \ref{letter-convergence} for words $w$ of length $n\geq 2$ instead of letters $a_i \in A$. As done there for $n=1$, we can use all words $w$ from the alphabet 
$A_n = A_n(\zeta)$ as ``coordinates'' and consider, for any 
word $w' \in A_{n}^*$, the 
{\em level $n$  
occurrence  
vector}
$\vec v_n(w') := (|w'|_w)_{w \in A_n}$. Again we obtain:
$$
M_{\zeta_n} \cdot \vec v_n(w') = \vec v_n({\zeta_n(w'))}
$$

\begin{prop}
\label{theoremB} 
Let $\zeta:A\to A^{*}$ be an expanding substitution.
Then, up to replacing $\zeta$ by a power, the frequencies of factors converge: For any 
word 
$w\in A^{*}$ 
of length 
$|w|\geq 2$ and any letter $a \in A$ the limit 
frequency 
\[
f_w(a) := \lim_{t\to\infty}\frac{|\zeta^{t}(a)|_w}{|\zeta^{t}(a)|}
\]
exists. 
\end{prop}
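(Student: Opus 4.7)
The plan is to apply Theorem \ref{thmI} to the incidence matrix $M_{\zeta_n}$ of the level-$n$ blow-up substitution. By Lemma \ref{PB-power-substitution}, after replacing $\zeta$ by a positive power I may assume $M_\zeta$ is in PB-Frobenius form and expanding; Proposition \ref{blow-up-Frobenius} then guarantees that $M_{\zeta_n}$ is in PB-Frobenius form for every $n\geq 2$. Since $\zeta$ is expanding so is $\zeta_n$, hence $M_{\zeta_n}$ is expanding, and Theorem \ref{thmI} applies to any nonzero non-negative vector under $M_{\zeta_n}$.

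Fix $a\in A$ and $w\in A^*$ with $|w|=n$, choose $k$ with $|\zeta^k(a)|\geq n$, write $\zeta^k(a)=u_1\cdots u_p$, and let $\vec V_k:=(|\zeta^k(a)|_v)_{v\in A_n}$ be the level-$n$ occurrence vector of $\zeta^k(a)$. Iterating the level-$n$ analogue of equation (\ref{3.1}), $M_{\zeta_n}^s\vec V_k$ is the level-$n$ occurrence vector of $\zeta_n^s(\Phi_n(\zeta^k(a)))$, whose $w$-coordinate $(M_{\zeta_n}^s\vec V_k)_w$ counts precisely the length-$n$ factors of $\zeta^{k+s}(a)=\zeta^s(u_1)\cdots\zeta^s(u_p)$ that start within the first $p-n+1$ blocks $\zeta^s(u_1),\ldots,\zeta^s(u_{p-n+1})$; Theorem \ref{thmI} then gives $M_{\zeta_n}^s\vec V_k/\|M_{\zeta_n}^s\vec V_k\|_1\to\vec V_{k,\infty}$. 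The remaining length-$n$ factors of $\zeta^{k+s}(a)$---those starting in the last $n-1$ blocks $\zeta^s(u_{p-n+2}),\ldots,\zeta^s(u_p)$---are exactly the length-$n$ factors of $\zeta^s(\sigma)$, where $\sigma:=u_{p-n+2}\cdots u_p\in A^{n-1}$ is a fixed word; hence, up to an additive constant $\leq n-1$,
\[
|\zeta^{k+s}(a)|_w=(M_{\zeta_n}^s\vec V_k)_w+|\zeta^s(\sigma)|_w,\qquad |\zeta^{k+s}(a)|=\|M_{\zeta_n}^s\vec V_k\|_1+|\zeta^s(\sigma)|.
\]
Convergence of the desired ratio therefore reduces to convergence of the two auxiliary quantities $|\zeta^s(\sigma)|/|\zeta^{k+s}(a)|$ and $|\zeta^s(\sigma)|_w/|\zeta^s(\sigma)|$.

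The main obstacle is the boundary ratio $|\zeta^s(\sigma)|/|\zeta^{k+s}(a)|$, which need not tend to a nice limit when distinct letters have distinct growth rates under $\zeta$, as can happen in the reducible setting. My plan is to handle it by applying Theorem \ref{thmI} to $M_\zeta$ acting on the non-negative vectors $\vec x:=(|\zeta^k(a)|_{a_i})_{a_i}$, $\vec y:=(|\sigma|_{a_i})_{a_i}$, and their sum $\vec x+\vec y$: linearity of $M_\zeta^s$ combined with $\ell_1$-additivity on the non-negative cone forces the three limit directions to be linked by an affine relation that pins down the ratio $\|M_\zeta^s\vec y\|_1/\|M_\zeta^s\vec x\|_1=|\zeta^s(\sigma)|/|\zeta^{k+s}(a)|$ whenever the limit directions of $M_\zeta^s\vec x$ and $M_\zeta^s\vec y$ differ; in the coincident case both the main and boundary contributions share the same asymptotic direction and no control of the ratio is needed. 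The boundary frequency $|\zeta^s(\sigma)|_w/|\zeta^s(\sigma)|$ is then obtained by iterating the same blow-up/Theorem \ref{thmI} argument with the word $\sigma$ in place of the letter $a$, using Lemma \ref{letter-convergence} as the base case of an induction on the length of the initial word; the recursion stabilizes because the map ``last $n-1$ letters of $\zeta^{s_0}(\cdot)$'' on the finite set $A^{n-1}$ is eventually periodic, so only finitely many suffix words occur up to that periodicity. Combining these convergences with the Perron-Frobenius limit for $M_{\zeta_n}^s\vec V_k$ yields existence of $\lim_s|\zeta^{k+s}(a)|_w/|\zeta^{k+s}(a)|$ as an explicit convex combination of limit vectors.
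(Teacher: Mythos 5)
Your setup (pass to a power so that $M_\zeta$ is PB-Frobenius, invoke Proposition \ref{blow-up-Frobenius} and Theorem \ref{thmI} for $M_{\zeta_n}$) matches the paper, but your choice of starting vector creates a boundary problem that the paper's proof never encounters, and your treatment of that boundary has a genuine gap. The paper does not feed the full level-$n$ occurrence vector of $\zeta^k(a)$ into $M_{\zeta_n}$; it takes a \emph{single} word $w'=x_1\ldots x_n\in A_n$ with $x_1=a$ and uses the two facts $|\zeta_n^t(w')|=|\zeta^t(a)|$ (the blow-up is normalized by the first letter only) and $\bigl||\zeta_n^t(w')|_w-|\zeta^t(a)|_w\bigr|\le n$. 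Theorem \ref{thmI} applied to the coordinate vector $\vec e_{w'}$ then gives the limit directly, with no suffix word $\sigma$, no ratio of norms, and no induction. Your decomposition $|\zeta^{k+s}(a)|_w=(M_{\zeta_n}^s\vec V_k)_w+|\zeta^s(\sigma)|_w$ is correct, but it forces you to prove two extra convergences that the paper's argument makes unnecessary.

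The concrete gap is in your handling of the coincident case. Writing $\alpha_s=\|M_{\zeta_n}^s\vec V_k\|_1/|\zeta^{k+s}(a)|$, the quantity you want is $\alpha_sP_s+(1-\alpha_s)Q_s$ with $P_s=(M_{\zeta_n}^s\vec V_k)_w/\|M_{\zeta_n}^s\vec V_k\|_1$ and $Q_s=|\zeta^s(\sigma)|_w/|\zeta^s(\sigma)|$. Your affine-relation argument does pin down $\lim\alpha_s$ when the letter-level limit directions of $M_\zeta^s\vec x$ and $M_\zeta^s\vec y$ differ, but when they coincide you assert that ``no control of the ratio is needed.'' That is false: equality of the limiting \emph{letter} frequencies of $\zeta^{k+s}(a)$ and $\zeta^s(\sigma)$ does not imply $\lim P_s=\lim Q_s$ for a length-$n$ factor $w$, and if those limits differ while $\alpha_s$ oscillates, the convex combination does not converge. (The coincident case is salvageable, but only by upgrading from Theorem \ref{thmI} to Theorem \ref{GPFT}: since $\|M_\zeta^s\vec x\|$ and $\|M_\zeta^s\vec y\|$ are each asymptotic to a constant times a growth-type function $\lambda^ss^{d}$, their ratio always converges in $[0,\infty]$ -- an input you never invoke.) Separately, your claim that $Q_s$ converges is not proved: $\sigma$ has length $n-1<n$, so its level-$n$ occurrence vector is $\vec 0$ and Theorem \ref{thmI} does not apply to it; the proposed induction via the eventually periodic suffix map would reproduce the same boundary/coincidence difficulty at every stage and is only sketched. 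As written, the proof is incomplete, and the entire difficulty disappears if you replace $\vec V_k$ by $\vec e_{w'}$ as the paper does.
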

 
\begin{proof} 
Set $n = |w|$.
If $w$ does not belong to $A_n$, then $|\zeta^{t}(a)|_w = 0$ for all $t \in \N$, so that we can assume $w \in A_n$.

By Lemma \ref{PB-power-substitution} we can assume that,
up to replacing $\zeta$ by a positive power, the incidence matrix $M_\zeta$ is in PB-Frobenius form. Thus we can apply Proposition \ref{blow-up-Frobenius} to obtain that the blow-up incidence matrix $M_{\zeta_n}$ is also in PB-Frobenius form. Furthermore, if $\zeta$ is expanding,  
then so is $\zeta_n$, and hence $M_{\zeta_n}$.

From the definition of $\zeta_{n}$ we have the following estimate: For any two 
$w, w_1 \in A_n$, with 
$w_1 = x_1 x_2 \ldots x_n$, we have 
\[
\left||\zeta_{n}^{t}(w_1)|_w-|\zeta^{t}(x_1)|_w\right|\le n
\]
for all $t\ge1$. On the other hand, 
we have 
$|\zeta_{n}^{t}(w_1)|=|\zeta^{t}(x_1)|$. Therefore
one deduces:
\[
\lim_{t\to\infty}\left|\frac{|\zeta_{n}^{t}(w_1)|_w}{|\zeta_{n}^{t}(w_1)|}-\frac{|\zeta^{t}(x_1)|_w}{|\zeta^{t}(x_1)|}\right|\le \lim_{t\to\infty}\left|\frac{n}{|\zeta^{t}(x_1)|}\right|= 0
\]

Now, let $w'\in A_n$ be a word of length $n$ that starts with the letter $a$. As in the proof of Lemma \ref{letter-convergence} we can thus use Theorem \ref{thmI}, which applied to the level $n$ 
occurrence  
vector $\vec v_n(w')$ gives that
\[
\lim_{t\to\infty}\frac{|\zeta_{n}^{t}(w')|_w}{|\zeta_{n}^{t}(w')|}
\]
exists, and together with the above observation equals to 
\[
\lim_{t\to\infty}\frac{|\zeta^{t}(a)|_w}{|\zeta^{t}(a)|}.
\]
 
\end{proof}

Similar to the case where $n =1$ in Lemma \ref{letter-convergence} 
it follows that the sum of the coefficients of the limit 
frequency 
vector $\vec v_n^{\, \infty}(a)$ is equal to $1$.  Again, for an expanding reducible substitution $\zeta$ the limit 
frequency 
vector $\vec v_n^{\, \infty}(a)$ will in general depend on the choice of $a \in A$.

\subsection{Invariant measures for expanding substitutions}
\label{invmeasures}

Recall from section \ref{substitutions} that
the subshift $\Sigma_\zeta$ associated to a substitution $\zeta$ is
the space of all biinfinite words 
which have the property that any finite factor belongs to $\cal L_\zeta$.
Any word $w = x_1 \ldots x_m \in A^*$ defines a {\em Cylinder} 
${\rm Cyl}_w = {\rm Cyl}_w(\zeta) \subset \Sigma_\zeta$ which consists of all biinfinite sequences $\ldots y_i y_{i+1} y_{i+2} \ldots$ 
in $\Sigma_\zeta$ 
which satisfy $y_1 = x_1, y_2 = x_2, \ldots, y_m = x_m$.

In the classical case where $\zeta$ is 
primitive,
it is well known that the subshift $\Sigma_\zeta$ defined by $\zeta$ is uniquely ergodic. In this case
the limit 
frequency 
$f_w(a)$ obtained in Proposition \ref{theoremB} 
is typically 
used to describe the value that the invariant 
probability 
measure $\mu_\zeta$ takes on the cylinder ${\rm Cyl}_w$ defined by 
any 
$w \in \cal L_\zeta \subset A^*$ 
(see section 5.4.2 of \cite{Q}).

In 
the situation treated 
in this paper,
where $\zeta$ is only assumed to be expanding (so that $M_\zeta$ may well be reducible), there is no such hope for a similar unique ergodicity result. 
However, the definition of invariant measures on $\Sigma_\zeta$, through
limit frequencies 
as known from the primitive case, 
extends 
naturally via the results of this paper to any expanding reducible substitution $\zeta$. 
We will use the remainder of this subsection to elaborate this, and to comment 
on 
some related developments.

\smallskip

Every shift-invariant measure $\mu$ on $\Sigma_\zeta$ defines a function $\omega_\mu: A^* \to \R_{\geq 0}$ by setting $\omega_\mu(w) := \mu({\rm Cyl}_w)$ if $w$ belongs to $\cal L_\zeta$, and $\omega_\mu(w) := 0$ otherwise.

Conversely, 
it is well known 
(see for instance \cite{FM})
that any function $\omega: A^* \to \R_{\geq 0}$ is defined by an invariant measure $\mu$
on the full shift $A^\Z$
if and only if $\omega$ is a {\em weight function}, i.e. $\omega$ satisfies the Kirchhoff conditions spelled out in Definition \ref{Kirchhoff} below.  In this case $\omega$ determines $\mu$, i.e. there is a unique invariant measure $\mu$ on 
$A^\Z$
that satisfies $\omega = \omega_\mu$. Furthermore, the support of $\mu$ is contained in $\Sigma_\zeta \subset A^\Z$ if and only of $\omega(w) = 0$ for all $w \in A^* \smallsetminus \cal L_\zeta$.

\begin{defn}
\label{Kirchhoff}
A function $\omega: A^* \to \R_{\geq 0}$ satisfies the {\em Kirchhoff conditions} if for any $w \in A^*$ it satisfies:
$$\omega(w) = \sum_{a_i \in A} \omega(a_i w) = \sum_{a_i \in A} \omega(w a_i)$$
\end{defn}

\begin{prop}
\label{limit-frequencies-Kirchhoff}
Let $\zeta: A \to A^*$ be an expanding substitution, raised to a suitable power according to Proposition \ref{theoremB}. Then for any letter $a \in A$ the function
$$\omega_a: A^* \to \R_{\geq 0}, \,\,\, w \mapsto f_w(a)\, ,$$
given by the limit frequencies $f_w(a)$ from Proposition \ref{theoremB}, satisfies the Kirchhoff conditions.
\end{prop}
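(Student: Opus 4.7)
The plan is to fix a letter $a \in A$ and a word $w \in A^*$, then compare, for each $t \geq 0$, the count $|\zeta^t(a)|_w$ with the sum $\sum_{a_i \in A}|\zeta^t(a)|_{w a_i}$ (and similarly $\sum_{a_i \in A}|\zeta^t(a)|_{a_i w}$). The key combinatorial observation is that every occurrence of $w$ as a factor of $\zeta^t(a)$ is either immediately followed by a letter of $A$, in which case it contributes to exactly one of the $|\zeta^t(a)|_{w a_i}$, or it sits at the very end of $\zeta^t(a)$ (and then has no successor letter). Hence
\[
|\zeta^t(a)|_w \;-\; \sum_{a_i \in A} |\zeta^t(a)|_{w a_i} \;\in\; \{0,1\}\, ,
\]
and symmetrically,
\[
|\zeta^t(a)|_w \;-\; \sum_{a_i \in A} |\zeta^t(a)|_{a_i w} \;\in\; \{0,1\}\, ,
\]
where the right error equals $1$ precisely when $w$ occurs as a suffix (respectively prefix) of $\zeta^t(a)$.

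Next, I would divide both identities by $|\zeta^t(a)|$. Since $\zeta$ is expanding, Definition-Remark \ref{expanding-matrix}(2) applied to the occurrence vector $\vec v(a)$ (together with the identity $\|M_\zeta^t \vec v(a)\| = |\zeta^t(a)|$ recalled in the proof of Lemma~\ref{letter-convergence}) yields $|\zeta^t(a)| \to \infty$ as $t \to \infty$. The bounded error terms thus contribute $0$ in the limit. Since $\zeta$ has been replaced by the power provided by Proposition~\ref{theoremB}, each of the limits
\[
\lim_{t\to\infty}\frac{|\zeta^t(a)|_w}{|\zeta^t(a)|}, \quad \lim_{t\to\infty}\frac{|\zeta^t(a)|_{wa_i}}{|\zeta^t(a)|}, \quad \lim_{t\to\infty}\frac{|\zeta^t(a)|_{a_iw}}{|\zeta^t(a)|}
\]
exists, and equals $\omega_a(w)$, $\omega_a(wa_i)$, and $\omega_a(a_iw)$ respectively. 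Passing to the limit on both sides of the two bounded-error identities above then gives exactly
\[
\omega_a(w) \;=\; \sum_{a_i \in A} \omega_a(a_i w) \;=\; \sum_{a_i \in A} \omega_a(w a_i)\, ,
\]
which are the Kirchhoff conditions.

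There is really no hard step here; the proof is essentially a bookkeeping argument together with the crucial input that $|\zeta^t(a)| \to \infty$, which relies on $\zeta$ being expanding. The only mild point to check is that if $w \notin \cal L_\zeta$, then $|\zeta^t(a)|_w = 0$ for all $t$ and also $|\zeta^t(a)|_{wa_i} = |\zeta^t(a)|_{a_iw} = 0$ for every $a_i \in A$, so both sides of Kirchhoff are zero and the identity holds trivially; the content of the statement is thus concentrated on $w \in \cal L_\zeta$, where the above limit frequencies are genuinely computed by Proposition~\ref{theoremB}.
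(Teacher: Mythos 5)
Your proof is correct and follows essentially the same route as the paper: count occurrences of $w$ versus occurrences of $a_i w$ (resp.\ $w a_i$) in $\zeta^t(a)$, observe the discrepancy is at most $1$ (coming from a prefix/suffix occurrence), divide by $|\zeta^t(a)| \to \infty$, and pass to the limit using the convergence supplied by Proposition \ref{theoremB}. Your version is if anything slightly more careful than the paper's, in pinning down the error term as exactly $0$ or $1$ and in noting the trivial case $w \notin \cal L_\zeta$.
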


\begin{proof}
We consider $\zeta^t(a)$ as in Proposition \ref{theoremB} and observe that any occurrence of a word $w$ as factor in $\zeta^t(a)$, unless it is a prefix, together with its preceding letter $a_i$ in $\zeta^t(a)$ gives an occurrence of the factor $a_i w$, and conversely. The analogous statement holds for factors $w a_i$. Hence for every $w \in A^*$ each of the two equalities in Definition \ref{Kirchhoff}, for $\omega(w) := |\zeta^t(a)|_w$, either holds directly, or else it holds up to an additive constant $\pm 1$. Since by the assumption that $\zeta$ is expanding we have $|\zeta^t(a)| \to \infty$ , the Kirchhoff conditions must hold for the limit quotient function $\omega_a = f_w(a) = \lim_{t\to\infty}\frac{|\zeta^{t}(a)|_w}{|\zeta^{t}(a)|}$.
\end{proof}

\begin{rem}
\label{invariant-measure+}
Since for any $a \in A$ and any $w \notin \cal L_\zeta$ the limit frequencies satisfy $f_a(w) = 0$, we obtain directly from Proposition \ref{limit-frequencies-Kirchhoff} that the weight function $\omega_a$ defines an invariant measure $\mu_a$ on $\Sigma_\zeta$. This proves 
Corollary \ref{invariant-measure1} from the Introduction.
\end{rem}



From the definition via limit frequencies it follows immediately that any of the $\mu_a$ is a {\em probability measure}, i.e. 
$\mu_a(\Sigma_\zeta) = 1$.
Contrary to the primitive case, for an expanding substitution $\zeta$ distinct letters $a_i$ of $A$ may well define distinct measures $\mu_{a_i}$ on $\Sigma_\zeta$. However, 
as it happens in the primitive case, distinct $a_i \in A$ may also define the same measure $\mu_{a_i}$. This raises several 
natural 
questions:

\begin{quest}
\label{yet-open}
Let $\zeta$ be an expanding substitution as before. 
\begin{enumerate}
\item
What is the precise condition on letters $a, a' \in A$ such that they define the same measure $\mu_a = \mu_{a'}$ on $\Sigma_\zeta$ ?
\item
Are there invariant measures on $\Sigma_\zeta$ that are not contained in the convex cone $\cal C_\zeta$,
by which we denote the set of
all 
non-negative 
linear combinations of the $\mu_a$ ?
\item
Which of the measures in $\cal C_\zeta$ have the property that in addition to being invariant under the shift operator they are also 
{\em projectively invariant} under application of the substitution $\zeta$ ? By this we mean that
there exist some scalar $\lambda > 0$ such that the image measure $\zeta_*(\mu)$ on $\Sigma_\zeta$ satisfies $\zeta_*(\mu)(X) = \lambda \mu(X)$ for any measurable subset $X \subset \Sigma_\zeta$.
\end{enumerate}
\end{quest}

Attempting seriously to find answers to these questions with the methods laid out here goes beyond the scope of this paper. We limit ourselves to the following:

\begin{rem}
\label{potentially-more}
Our analysis of the eigenvectors of non-negative matrices in PB-Frobenius form in \S \ref{PB-convergence}, when combined with the technique presented in \S \ref{blow-up} above to understand simultaneous eigenvectors for all blow-up level incidence matrices, seems to have the potential to show that the convex cone $\cal C_\zeta$ is spanned by invariant measures that are 
determined 
by the principal eigenvectors (see \S \ref{PB-convergence}) 
of the ``level 1'' incidence matrix $M_\zeta$.
In particular - regarding Question \ref{yet-open} (1) - it seems feasible that $\mu_a = \mu_{a'}$ if and only if $a$ and $a'$ define coordinate vectors $\vec e_a$ and $\vec e_{a'}$ which converge (up to normalization) to the same eigenvector of $M_\zeta$.
\end{rem}

\begin{rem}
\label{big-paper}
In the special case where the substitution $\zeta$, reinterpreted as ``positive'' endomorphism of the free group $F(A)$ with basis $A$, is invertible
with no periodic non-trivial conjugacy classes in $F(A)$, 
a negative answer to Question \ref{yet-open} (2) follows from the the main result of our paper \cite{LU}, 
which was our original motivation to do the work presented here.
\end{rem}

In much more generality reducible substitutions on the whole and 
Question \ref{yet-open} (2) in particular have already been treated in the literature, by the work of Bezuglyi--Kwiatkowski--Medynets--Solomyak, see \cite{BKMS} and the papers cited there. 
A more restricted class of substitutions had been treated previously by Hama--Yuasa, see \cite{HY}.
In particular, the following should be noted:

\begin{rem}
\label{moulinette}
It is shown in \cite{BKMS} 
for expanding substitutions $\zeta$ with a mild extra restriction
that the 
ergodic invariant probability measures on the subshift $\Sigma_\zeta$  
are 
in 1-1 correspondence with the 
normalized
(extremal)
distinguished eigenvectors (see Remark \ref{Schneider})
of the incidence matrix $M_\zeta$
(or 
perhaps 
rather, of the incidence matrix of a conjugate substitution defined there). 

However, 
a direct 
translation of the results of \cite{BKMS}, which is based on Bratteli diagrams and Vershik maps, to the 
framework of the 
work presented here seems 
to be non-evident.
\end{rem}

Also in this context, in particular with respect to Question \ref{yet-open} (3) above, we note:

\begin{rem}
\label{relation-to-BHL}
In the recent preprint \cite{BHL} a conceptually new machinery (called ``train track towers'' and ``weight towers'') 
for subshifts in general 
has been developed, and applied as special case to reducible substitutions $\zeta$ as considered here. As a main result a bijection has been established there between the non-negative eigenvectors of $M_\zeta$ and the ``invariant'' measures on $\Sigma_\zeta$. Although limit frequencies are not treated in \cite{BHL}, it can be seen via weight functions that this bijection is the same as the one indicated in Remark \ref{potentially-more} above.

However, a crucial difference to the work presented here is that in \cite{BHL} ``invariant'' means not just shift-invariance but also projective invariance with respect to the map on measures induced by the substitution $\zeta$, see Question \ref{yet-open} (3) above.
\end{rem}

\section{Primitive Frobenius Matrices and Normalization}
\label{primitive-Frobenius}
\subsection{Normalization functions}
\label{normalization}

Let $\vec U = (\vec u_t)_{t \in \N}$ be an infinite family of vectors in $\R^n$. Then 
$$h 
: \N \to \R$$
is a {\em normalization function} for $\vec U$ if 
\[
\lim_{t\to\infty}\frac{\vec u_t}{h(t)} = \vec v_U
\]
exists, for some limit vector 
$\vec v_{\vec U} \neq \vec 0\, $. 

\begin{rem}
\label{length-normalization}
For any family $\vec U = (\vec u_t)_{t \in \N}$ which possesses a normalization function $h$ as above, the function $h'(t):= \|\vec u_t \|$ is also a normalization function. 
This follows directly from the fact that $\underset{t\to\infty}{\lim}\frac{\vec u_t}{h(t)} = \vec v_U$ implies $\| \vec v_U\| = \underset{t\to\infty}{\lim}{\big \|}\frac{\vec u_t}{h(t)} {\big \|} = \underset{t\to\infty}{\lim}\frac{\|\vec u_t\|}{h(t)} $ and thus $\underset{t\to\infty}{\lim}\frac{\vec u_t}{\|\vec u_t\|} = \underset{t\to\infty}{\lim}\frac{\vec u_t}{h(t)} \frac{h(t)}{\|\vec u_t\|} = \frac{1}{\|\vec v_U\|}\vec v_U$.

\end{rem}

Two functions $f: \N \to \R$ and $g: \N \to \R$ are said to be {\em of the same growth type} if there exist a constant $C > 0$ such that

\[
\lim_{t \to \infty} \frac{f(t)}{g(t)} = C\, .
\]
We say that the growth type of $g$ is strictly bigger than that of $f$ if
\[
\lim_{t \to \infty} \frac{f(t)}{g(t)} = 0\, .
\]

It follows directly 
that, given any infinite family of vectors $\vec U = (\vec u_t)_{t \in \N}$ in $\R^n$, then any two normalization functions $h
$ and $h'
$ for 
$\vec U$ must be of the same growth type, and that, conversely, any other function $h'': \N \to \R$ which is of the same growth type, can be used as normalization function for $\vec U$:
the family 
of values
$\frac{\vec u_t}{h''(t)}$ converges to some non-zero vector in $\R^n$, and 
the latter must be a positive scalar multiple of 
the above limit vector $\vec v_U$.

The following is a direct consequence of the definitions:

\begin{lem}
\label{growth-type-sums}
Let $\vec U = (\vec u_t)_{t \in \N}$ and $\vec U' = (\vec u'_t)_{t \in \N}$ be two infinite family of vectors in $\R^n$, and define $\vec U + \vec U' = (\vec u_t + \vec u'_t)_{t \in \N}$. Let $h 
: \N \to \R$ and $h' 
: \N \to \R$ be normalization functions for $\vec U$ and $\vec U'$ respectively.

\smallskip
\noindent
(1) If the growth type of $h$ is strictly bigger than that of $h'$, then $h$ is also a normalization function for $\vec U + \vec U'$.  Similarly, if the growth type of $h$ is strictly smaller than that of $h'$, then $h'$ is a normalization function for $\vec U + \vec U'$.

\smallskip
\noindent
(2) If $h$ and $h'$ have the same growth type, then a normalization function for $\vec U + \vec U'$ is given by 
both, $h$ or $h'$.
\qed
\end{lem}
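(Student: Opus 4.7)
The plan is to prove both parts by direct manipulation of quotients using the definition of normalization function and the algebra of limits applied coordinatewise. The unifying identity is
\[
\frac{\vec u_t + \vec u'_t}{h(t)} \;=\; \frac{\vec u_t}{h(t)} \;+\; \frac{\vec u'_t}{h'(t)} \cdot \frac{h'(t)}{h(t)},
\]
so the behavior of the sum under normalization by $h$ is reduced to understanding the scalar ratio $h'(t)/h(t)$, together with the symmetric identity obtained by exchanging the roles of $h$ and $h'$.

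For part (1), suppose $h$ has strictly bigger growth type than $h'$, so that $h'(t)/h(t)\to 0$. The first summand in the identity above tends to $\vec v_U$ by hypothesis; the second summand is a product of a convergent (hence bounded) vector sequence $\vec u'_t/h'(t)\to\vec v_{U'}$ with a scalar tending to $0$, and so tends to $\vec 0$. Thus the sum converges to $\vec v_U\neq\vec 0$, which shows that $h$ is a normalization function for $\vec U+\vec U'$. The reverse case, in which $h'$ dominates, is entirely symmetric.

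For part (2), suppose $h$ and $h'$ have the same growth type, so $h'(t)/h(t)\to C$ for some $C>0$. The same identity yields that $(\vec u_t+\vec u'_t)/h(t)$ converges to $\vec v_U+C\vec v_{U'}$, and the analogous computation normalizing by $h'$ produces the limit $\vec v_{U'}+C^{-1}\vec v_U$, which is just $C^{-1}$ times the first. The one nontrivial point, and the only genuine obstacle worth flagging, is that in order to qualify as a normalization function in the sense of Section \ref{normalization} this limit vector must be non-zero, whereas \emph{a priori} cancellation $\vec v_U=-C\vec v_{U'}$ is possible. The statement of (2) is therefore to be read under the (natural) proviso that $\vec U+\vec U'$ does admit a normalization function; under that proviso the computation above shows that both $h$ and $h'$ serve, and moreover that their limit vectors differ only by the positive scalar $C$, consistent with the earlier observation that all normalization functions for a given family have the same growth type.
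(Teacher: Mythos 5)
Your argument is correct and is exactly the elementary computation the paper has in mind (the paper offers no proof, asserting the lemma as a direct consequence of the definitions): write $\frac{\vec u_t+\vec u'_t}{h(t)}=\frac{\vec u_t}{h(t)}+\frac{\vec u'_t}{h'(t)}\cdot\frac{h'(t)}{h(t)}$ and pass to the limit. Part (1) is complete as written. Your flag on part (2) identifies a real imprecision in the statement: for arbitrary families in $\R^n$ the limit $\vec v_U+C\vec v_{U'}$ can vanish, so (2) is false as literally stated. However, the proviso you propose --- that $\vec U+\vec U'$ admits \emph{some} normalization function --- does not repair it: take $\vec u_t=t\vec e_1+\vec e_2$ and $\vec u'_t=-t\vec e_1+\vec e_2$ with $h=h'=t$; then both families are normalized by $t$ with non-zero limits $\pm\vec e_1$, and the sum $2\vec e_2$ admits the constant normalization function, yet $t$ is not a normalization function for it. The correct additional hypothesis is precisely $\vec v_U+C\vec v_{U'}\neq\vec 0$, which is what your computation actually establishes as the sufficient condition. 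In every application in the paper the families lie in the non-negative cone $\cal C^n$, so the limit vectors are non-negative and non-zero and no cancellation can occur; it would be worth recording that this is the implicit standing assumption under which (2) is used.
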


\subsection{Lower triangular block matrices}
\label{block-matrices}
Let $M$ be a non-negative integer square matrix. Assume that the rows (and correspondingly the columns) 
of $M$ are partitioned into {\em blocks} $B_i$ so that $M$ is a lower triangular block matrix
with square diagonal matrix blocks. 

We now define a relation on  
the set of 
blocks as follows:
We write
$B_i \succ B_j$ 
if and only if 
$B_i \neq B_j$ and if
there exists a 
non-negative vector $\vec v$ which has non-zero coefficients only in the block $B_i$, such that for some $t\ge1$ the vector $M^{t} \vec v$ has a non-zero coefficient in the block $B_j$.  This is equivalent to stating that for some $t\geq 1$, in the matrix $M^{t}$ the off-diagonal matrix block 
in the $i^{th}$ block column and the $j^{th}$ block row has at least one positive entry. 

For any block $B_i$ we define the {\em dependency block union} $C(B_i)$ to be the union of all blocks $B_j$ with $B_i \succ B_j$. 

Observe that, if every diagonal block of $M$ is either 
irreducible or a $(1\times 1)$-matrix, this relation defines a
partial order on the blocks, 
denoted by writing $B_i \succeq B_j$ if either $B_i = B_j$ or $B_i \succ B_j$.

\smallskip

Let us denote by $\cal C^n$ the non-negative cone in $\R^n$ with respect to the fixed ``standard basis'' $\vec e_1, \ldots ,\vec e_n$. For any block $B_i$ we define the {\em associated cone} $\cal B_i$ as the set of 
all non-negative 
column vectors in $\cal C^n$ that have non-zero entries only in the block $B_i$, i.e. all convex combinations of those $\vec e_i$ that ``belong'' to $B_i$.

A {\em block cone} $\cal C$ is a  
subcone of $\cal C^n$ which has the property that each cone $\cal{B}_i$ is either ``contained or disjoint", i.e. one has either $\cal B_i \subset \cal C$ or $\cal B_i \cap \cal C =  \{\vec 0 \}$. 
Unless otherwise stated, we are only interested in block cones $\cal C$ that are invariant under the action of $M$, i.e.  $M \vec v \in \cal C$ for any $\vec v \in \cal C$. This
is
equivalent to stating that for any block $B$ with $\cal B \subset \cal C$ the block cone $\cal C(B)$ 
(called {\em the dependency block cone}) 
associated to the dependency block union $C(B)$ is contained in $\cal C$.

\subsection{Primitive Frobenius Form}
\label{frobform}

Let $M$ be a non-negative integer square matrix as considered above, 
and assume that $M$ is partitioned into matrix blocks 
so that $M$ is a lower triangular block matrix, and
along the diagonal all matrix blocks are squares.

\begin{defn}
\label{primitive-F}
(1)
The matrix $M$ is said to be in {\em primitive Frobenius form} if
every diagonal matrix block is 
primitive,
including the case of  a $(1 \times 1)$-matrix with entry $1$ or $0$.

\noindent(2)
For 
every  
block $B_i$ we 
refer to 
the Perron-Frobenius eigenvalue $\lambda_i$ 
of the corresponding diagonal block of $M$
as the {\em PF-eigenvalue} 
of 
the block $B_i$.
This includes (for the special case of  a $(1 \times 1)$-zero block $B_i$) the possible value $\lambda_i = 0$. 

\smallskip
\noindent
(3)
For every 
diagonal block $M_{i, i}$ 
of $M$ we define the {\em extended PF-eigenvector} $\vec v = \sum a_i \vec e_i$ to be obtained from a Perron-Frobenius eigenvector of $M_{i, i}$ through adding $0$ as values in all other coordinates, subject to the condition that 
$\|\vec v\| = \sum |a_i| = 1$.
\end{defn}

\begin{rem}
\label{iterating-to-get-primitive-F}
(a) Every non-negative integer square matrix $M$ has a positive power which is in primitive Frobenius form. This is a direct consequence of the well known normal form for non-negative matrices (compare the proof of Lemma \ref{Frobenius-powers*}).

\smallskip
\noindent
(b) If $M$ is already in PB-Frobenius form (see Definition \ref{Frobenius-form*}), and the positive power $M^t$ is in primitive Frobenius from, then it follows directly from the definitions that the block decomposition for $M$ agrees outside of the PB-blocks with that of $M^t$, while the PB-blocks for $M$ need possibly be partitioned further to get the blocks for $M^t$.
\end{rem}

For any matrix $M$ in primitive Frobenius form we define the {\em growth type} associated to any of its blocks $B_i$ as follows: 
Among the blocks $B_j$ with $B_j \preceq B_i$, we consider the maximal PF-eigenvalue $\lambda_{max}(B_i) := \max\{\lambda_j \mid B_i \succeq B_j\}$, and the longest 
(or rather: ``a longest'') 
chain of blocks $B_{i_k} \succ B_{i_{k-1}} \succ \ldots \succ B_{i_1}$ which all have PF-eigenvalue $\lambda_{i_k} = \lambda_{i_{k-1}} = \ldots = \lambda_{i_1}=\lambda_{max}(B_i)$.
We then define the function 
$$h_i: \N \to \R, \, t \mapsto \lambda_{max}(B_i)^t \cdot t^{k-1}$$ 
as the {\em growth type function} of the block $B_i$.

Similarly, we define the {\em growth type function} 
$h_C: \N \to \R$
of any union of blocks $C$  (or of the associated block cone $\cal C$) as the maximal growth type function $h_j$ of any $B_j$ which belongs to $C$.

\begin{defn}[Dominant Interior]
\label{domin} 
Let  $\cal C$ be the block cone associated to any 
union $C$ of blocks. 
Define the {\em dominant interior} of $\cal{C}$
as follows:
Pick some longest chain of blocks $B_{i_k} \succ B_{i_{k-1}} \succ \ldots \succ B_{i_1}$ as above, i.e. all $B_{i_j}$ have PF-eigenvalue $\lambda_{i_k} = \lambda_{i_{k-1}} = \ldots = \lambda_{i_1}=\lambda_{max}(C)$
(in other words: the block $B_{i_j}$ is part of a ``realization'' of the growth type function $h_C$).

Let $\vec v \in \cal C$ be a vector for which the coordinates, for all vectors $\vec e_i$ of the standard basis that belong to one of the blocks $B_{i_j}$, are non-zero. 
The dominant interior of $\cal C$ consists of all such vectors $\vec v$, for any longest chain of blocks as above, 
which may of course vary with the choice of $\vec v$. 
\end{defn}

\begin{lem}
\label{elementary-fact}
Let $M$ be 
in primitive Frobenius form.
Then there exists a bound $t_0 \geq 0$ such that for any 
blocks $B_i$ and $B_j$ of $M$ with 
$i < j$ 
and for any exponent 
$t \geq t_0$ the power $M^t$ of $M$ has 
off-diagonal block $M_
{j,i}
^{t}$ 
which is 
positive (i.e. has only positive coefficients) if $B_i \succ B_j$ and if 
one of the diagonal blocks $M_{i,i}$ or 
$M_{j,j}$ of $M$ is primitive non-zero. Otherwise $M_
{j,i}^t$ is zero.
\end{lem}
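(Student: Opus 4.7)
The statement has a positivity assertion and a vanishing assertion; I will address the positivity part in detail, since the vanishing sub-case $B_i\not\succ B_j$ is immediate from the definitions: each entry of $M^t_{j,i}$ counts weighted walks of length $t$ between specified vertices of the digraph encoded by $M$, and no such walk exists.

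For the positivity claim I would fix $B_i\succ B_j$ and assume without loss of generality that $M_{i,i}$ is primitive non-zero; the symmetric case with $M_{j,j}$ primitive non-zero is analogous. By $B_i\succ B_j$, pick $q_0\in B_i$, $p_0\in B_j$ and $s_0\ge 1$ with the entry $(M^{s_0})[p_0,q_0]>0$; by primitivity of $M_{i,i}$, pick $s_i$ so that every entry of $(M_{i,i})^s$ is positive for all $s\ge s_i$. The key general observation, used throughout, is that the lower block-triangular structure of $M$ forces any walk between two vertices of $B_i$ to remain inside $B_i$, so $(M^s)[q',q]=((M_{i,i})^s)[q',q]$ for all $q,q'\in B_i$, and analogously for $B_j$. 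To prove entry-wise positivity of $M^t_{j,i}$, for arbitrary $(p,q)\in B_j\times B_i$ I expose one strictly positive summand in
\[
(M^t)[p,q] \;\ge\; (M^{t_1})[p,p_0]\cdot (M^{s_0})[p_0,q_0]\cdot (M^{t_2})[q_0,q], \qquad t_1+s_0+t_2=t.
\]
The middle factor is positive by construction, and the third factor equals $((M_{i,i})^{t_2})[q_0,q]>0$ for $t_2\ge s_i$. For the first factor two sub-cases arise: if $M_{j,j}$ is also primitive non-zero with threshold $s_j$, I fix $t_1:=s_j$ and use $((M_{j,j})^{s_j})[p,p_0]>0$; if instead $M_{j,j}$ is the $(1\times 1)$-zero block, then $B_j=\{p_0\}$ forces $p=p_0$ and the choice $t_1:=0$ suffices. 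Letting $t_2$ range freely over integers $\ge s_i$ realises every $t$ beyond a pair-dependent bound $t_{i,j}$; setting $t_0:=\max_{i<j}t_{i,j}$ over the finitely many block pairs produces the required uniform threshold.

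The hardest part will be the remaining sub-case of the vanishing claim, where $B_i\succ B_j$ holds but both $M_{i,i}$ and $M_{j,j}$ are $(1\times 1)$-zero blocks. A naive bound does not suffice because reachability from $B_i$ to $B_j$ may be witnessed by walks passing through an intermediate primitive non-zero block whose self-loops keep contributing positive counts to $M^t_{j,i}$ for arbitrarily large $t$. A clean resolution presumably relies either on an implicit structural reduction on the block decomposition (for instance, coalescing each maximal chain of $(1\times 1)$-zero blocks with its first primitive non-zero ancestor) or on a separate inductive argument tracing how the bounded boundary contributions from such chains stabilise under iteration.
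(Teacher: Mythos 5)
Your positivity argument is correct and is essentially the paper's own: both proofs locate one positive entry of some $M^{s_0}_{j,i}$ via the definition of $B_i\succ B_j$, pre- and post-compose with large powers of whichever of $M_{i,i}$, $M_{j,j}$ is primitive non-zero, and use the fact that a zero diagonal block is necessarily a $(1\times 1)$-matrix to dispense with the remaining factor. That half of the lemma is sound, and it is also the only half that is actually invoked later: Lemma \ref{initial-faces} only needs positivity of $M^t_{j,i}$ for blocks $B_j$ whose diagonal block is primitive non-zero.

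The sub-case you could not close --- $B_i\succ B_j$ with both $M_{i,i}$ and $M_{j,j}$ equal to the $(1\times 1)$-zero block --- is not a missing idea on your part but a defect in the statement. The paper disposes of it in one sentence, asserting that $M^t_{j,i}$ is then zero for all $t\geq 2$; this is false for exactly the reason you identify. Take
\[
M=\begin{pmatrix} 0&0&0\\ 1&2&0\\ 0&1&0\end{pmatrix}
\]
with three $(1\times 1)$-blocks: this is in primitive Frobenius form, $B_1\succ B_3$, both $M_{1,1}$ and $M_{3,3}$ are zero, yet $(M^t)[3,1]=2^{\,t-2}>0$ for every $t\geq 2$, because a walk from vertex $1$ to vertex $3$ may dwell arbitrarily long in the intermediate primitive block $B_2$. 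Consequently neither of your proposed repairs can yield a proof of the conclusion as stated; the honest fix is to weaken the ``otherwise zero'' clause (for example, restrict it to the case $B_i\not\succ B_j$, or add the hypothesis that no block strictly between $B_i$ and $B_j$ in the order $\succ$ is primitive non-zero), which costs nothing in the rest of the paper since only the positivity assertion is used downstream.
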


\begin{proof}
We can first rise $M$ to a positive power $M^s$ such that any primitive non-zero diagonal block $M_{i,i}^s$ of $M^s$ is positive.
It follows that the same is true for any exponent $s' \geq s$.

If $B_i \succ B_j$, then by definition of $\succ$ for some integer $k = k(i,j)$ the power $M^k$ has in its off-diagonal block $M_
{j,i}^k$ some positive coefficient $a_{p,q}$. If both, $M_{i,i}$ and $M_{j,j}$ are primitive non-zero, it follows that for $M^{k+2s}$ the same diagonal block is positive, and this is also true for any exponent $t \geq k + 2s$.

If $B_i \succ B_j$ and $M_{i,i}$ is primitive non-zero but $M_{j,j}$ is zero, we deduce from above the positive coefficient $a_{p,q}$ of $M_
{j,i}^k$ that for $M^{k+s}$ all coefficients in the $p$-th line of the block $M_
{j,i}^{k+s}$ must be positive. We now use the fact that the diagonal zero matrix $M_{j,j}$ must be a $(1\times1)$-matrix, so that $M_
{j,i}^{k+s}$ consists of a single line, which is thus positive throughout. The same argument holds for any $t = k+s'$ with $s' \geq s$.

If $B_i \succ B_j$ and $M_{j,j}$ is primitive non-zero but $M_{i,i}$ is zero, we deduce from above the positive coefficient $a_{p,q}$ of $M_
{j,i}^k$ that for $M^{k+s}$ all coefficients in the $q$-th column of the block $M_
{j,i}^{k+s}$ must be positive. We now use the fact that the diagonal zero matrix $M_{i,i}$ must be a $(1\times1)$-matrix, so that $M_
{j,i}^{k+s}$ consists of a single column, which is thus positive throughout. The same argument holds for any $t = k+s'$ with $s' \geq s$.

If $B_i \succ B_j$ and both, $M_{i,i}$ and $M_{j,j}$ are zero matrices, then $M^2$ has as 
$(j,i)$-th block the zero matrix, and the same is true for all powers $M^t$ with $t \geq 2$. 

Finally, 
if it doesn't hold that $B_i \succ B_j$, then by definition of $\succ$ the 
$(j,i)$-th block of any positive power of $M$ is the zero matrix.  
\end{proof}

\begin{lem}
\label{initial-faces}
Let 
$M$ be in primitive Frobenius form, 
and assume that there are no zero columns in $M$.
Let $B_i$ be any 
block of $M$, 
let $C_i = C(B_i)$ be its dependency block union,
and
let $\cal B_i$ and $\cal C_i$ be the corresponding block cones. For any non-zero vector $\vec v \in \cal B_i$ we write 
$$M^t \vec v = \vec v^*_t + \vec u^*_t$$
with $
\vec v^*_t \in \cal B_i$ and $
\vec u^*_t \in \cal C_i$. 

Then there is a bound $t_0 \in \N$ depending only on $M$ such that for every $t \geq t_0$ the vector $\vec u^*_t$ is contained in the dominant interior of $\cal C_i$. 
\end{lem}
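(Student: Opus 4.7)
The plan is to use Lemma \ref{elementary-fact} to force the off-diagonal blocks $M^t_{j,i}$ to be entirely positive for every block $B_j$ lying in a suitable longest chain inside $C_i$, and then to observe that this positivity, combined with the fact that $\vec v|_{B_i}$ is a non-zero non-negative vector, propagates to strict positivity of every coordinate of $\vec u^*_t$ in the blocks of that chain.

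The only step that genuinely uses the no-zero-columns hypothesis is to verify that $\lambda_{max}(C_i) > 0$ whenever $C_i \neq \emptyset$; the case $C_i = \emptyset$ is vacuous. The key observation is that any $\prec$-minimum block $B_j$ of $C_i$ is automatically a $\prec$-minimum of the entire block poset of $M$: if $B_k \prec B_j$, then transitivity combined with $B_j \in C_i$ yields $B_k \in C_i$, contradicting the minimality of $B_j$ inside $C_i$. Now if such a $B_j$ were a $(1\times 1)$-zero block, the entire $j$-th column of $M$ would vanish (above the diagonal by lower block triangularity, at the diagonal by the zero-block assumption, and below the diagonal by the minimality of $B_j$), contradicting the hypothesis. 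So $B_j$ must be primitive non-zero, which forces $\lambda_{max}(C_i) \geq \lambda_j > 0$.

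With $\lambda_{max}(C_i) > 0$ in hand, I would fix any longest chain $B_{j_k} \succ B_{j_{k-1}} \succ \cdots \succ B_{j_1}$ in $C_i$ whose PF-eigenvalues all equal $\lambda_{max}(C_i)$. Each diagonal block $M_{j_\ell, j_\ell}$ is then primitive non-zero, and $B_i \succ B_{j_\ell}$ by the definition of $C_i$, so Lemma \ref{elementary-fact} provides a uniform $t_0$ (depending only on $M$) such that for every $t \geq t_0$ and every $\ell$ the off-diagonal block $M^t_{j_\ell, i}$ has strictly positive entries throughout. Since $\vec v \in \cal B_i$ is non-zero, its restriction $\vec v|_{B_i}$ is non-negative with at least one strictly positive entry, and so every coordinate of $M^t_{j_\ell, i}\, \vec v|_{B_i}$ -- which is precisely a block $B_{j_\ell}$ coordinate of $M^t \vec v$ -- is a sum of non-negative terms containing at least one strictly positive summand. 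Hence $\vec u^*_t$ has strictly positive coordinates in every block $B_{j_\ell}$ of the chosen longest chain, which places $\vec u^*_t$ in the dominant interior of $\cal C_i$ by Definition \ref{domin}.

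The main (and essentially only) obstacle is the first step: extracting positivity of $\lambda_{max}(C_i)$ from the apparently mild no-zero-columns hypothesis requires combining the transitivity of $\prec$ with the lower block triangular structure of $M$. Once that is in place, the rest is a direct reading of Lemma \ref{elementary-fact} against the definition of the dominant interior.
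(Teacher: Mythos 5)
Your proof is correct and follows essentially the same route as the paper: apply Lemma \ref{elementary-fact} to get positivity of $M^t\vec v$ in the primitive non-zero blocks of $C_i$, and use the no-zero-columns hypothesis to see that $\lambda_{max}(C_i)>0$, so that the chains defining the dominant interior consist only of primitive non-zero blocks. Your argument via a $\prec$-minimal block of $C_i$ being minimal in the whole poset is a correct justification of the step ``no zero columns $\Rightarrow \lambda_{max}(C_i)>0$'', which the paper asserts without detail.
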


\begin{proof}
Let $t_0$ be as in Lemma \ref{elementary-fact}. Then $\vec v^*_t + \vec u^*_t = M^t v$ has positive coordinates in all blocks $B_j$ of $C_i$ for which $M$ has a primitive non-zero diagonal block $M_{j,j}$. Since $M$ has no zero-columns, the maximal eigenvalue for the blocks in $C_i$ must be strictly bigger then $0$. Thus the dominant interior of $C_i$ is defined through chains of blocks which are primitive non-zero. Hence $\vec u^*_t$ is contained in the dominant interior.
\end{proof}

\subsection{An example}
Before proceeding with the proof of the main theorem, we discuss an example explaining the above concepts: 

Let $M$ be the following matrix: 
\[
M=
  \begin{bmatrix}
     3& 1& 0 & 0 & 0  & 0 &0 & 0\\
 1& 1& 0 & 0 & 0  & 0 &0 & 0 \\
 1 & 2 & 2 & 1 & 0 & 0 & 0 & 0\\
 1 & 1 & 1 & 1 & 0  & 0 &0 &0\\
 4 & 0 & 0 & 0 & 3 & 1& 0 & 0\\
 1 & 1 & 0 & 0 & 1  & 1 & 0 & 0\\ 
  0 & 3 & 1 & 3& 2& 3& 2 & 1\\
 1 & 1 & 2 & 1 & 0  & 4 & 1 & 1\\ 
  \end{bmatrix}
\]

The matrix $M$ is partitioned into 4 blocks $B_1, B_2, B_3, B_4$, where $M_{1,1}=  \begin{bmatrix} 3 & 1\\ 1  & 1\\  \end{bmatrix}$,  $M_{2,2}=  \begin{bmatrix} 2 & 1\\ 1  & 1\\  \end{bmatrix}$, $M_{3,3}=  \begin{bmatrix} 3 & 1\\ 1  & 1\\  \end{bmatrix}$ and $M_{4,4}=  \begin{bmatrix} 2 & 1\\ 1  & 1\\  \end{bmatrix}$

\medskip
We have the following relations: $B_1\succ B_2\succ B_4$, $B_1\succ B_3$, $B_3\succ B_4$. We compute that $PF(B_1)=PF(B_3)=2+\sqrt{2}$ and  $PF(B_2)=PF(B_4)=
\frac{\sqrt{5}+3}{2} < 2+\sqrt{2}$. 

Hence, with the above definitions $B_1$ has growth type $t(2+\sqrt{2})^{t}$,  $B_2$ has $t(\frac{\sqrt{5}+3}{2})^{t}$, $B_3$ has $(2+\sqrt{2})^{t}$, and $B_4$ has $(\frac{\sqrt{5}+3}{2})^{t}$. 

The dependency blocks are given by $C(B_1) = B_2 \cup B_3 \cup B_4$, $C(B_2) = C(B_3) = B_4$, and $C(B_4) = \emptyset$.

The dominant interiors are given (where $\overset{\circ}{X}$ denotes the interior of a space $X$)

\noindent
for\,\,\,  $\cal B_1 + \cal C(B_1)$ \,\,\,  by \,\,\,  
$\overset{\circ}{\cal B_1} + \overset{\circ}{\cal B_3} + \cal B_2 + \cal B_4$,
\noindent
for\,\,\,  $\cal B_2 + \cal C(B_2)$ \,\,\,  by \,\,\,  
$\overset{\circ}{\cal B_2} + \overset{\circ}{\cal B_4}$,

\noindent
for\,\,\,  $\cal B_3 + \cal C(B_3)$ \,\,\,  by \,\,\,  
$\overset{\circ}{\cal B_3} + \cal B_4$,

\noindent
for\,\,\,  $\cal B_4 + \cal C(B_4)$ \,\,\,  by \,\,\,  
$\overset{\circ}{\cal B_4}$.

There is one more $M$-invariant block cone, given by $\cal C = \cal B_2 + \cal B_3 + \cal B_4$. Its dominant interior is given by $
\overset{\circ}{\cal B_3} + \cal B_2 + \cal B_4$
\section{Convergence for primitive Frobenius matrices}
\label{section-proof}

The goal of this and the following 
section is to  give a complete proof of the following result.
For related statements the reader is directed to the work of H. Schneider \cite{Sc} and the references given there.

\begin{thm}\label{GPFT}
Let $M$ be a non-negative integer square matrix
which is in primitive Frobenius form as  
given in Definition \ref{primitive-F}.
Assume that $M$ has no zero columns. 
Then for any non-negative vector $\vec{v} \neq \vec 0$ there exists a normalization function $h_{\vec v}$ such that 
\[
\lim_{t\to\infty}\frac{M^{t}\vec{v}}{h_{\vec v}(t)}=\vec{v}_{\infty}\, ,
\]
where $\vec{v}_{\infty} \neq \vec 0$ is an 
eigenvector of $M$. 
\end{thm}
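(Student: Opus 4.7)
The plan is to proceed by induction on the number $m$ of diagonal blocks of $M$. The base case $m=1$ reduces to the classical Perron--Frobenius theorem: since $M$ has no zero column, the single block $A = M_{1,1}$ is a primitive non-zero matrix with a positive eigenvector $\vec p_1$, so that $M^t \vec v / \lambda_1^t \to c\vec p_1$ with $c>0$ for every non-negative $\vec v \neq \vec 0$, and $h_{\vec v}(t) = \lambda_1^t$ works.

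For the inductive step I would write $M$ as
$$M = \begin{pmatrix} A & 0 \\ C & D \end{pmatrix},$$
where $A = M_{1,1}$ corresponds to the top block $B_1$ and $D$ is itself in primitive Frobenius form with $m-1$ blocks. Since $M$ is lower block triangular, every column of $M$ indexed by some $B_k$ with $k\ge 2$ is supported entirely in $D$; hence $D$ also has no zero column, so that the induction hypothesis applies to $D$. Decomposing $\vec v = \vec v_1 + \vec v'$ with $\vec v_1 \in \mathcal{B}_1$ and $\vec v'$ in the lower subspace, one computes
$$M^t \vec v = \begin{pmatrix} A^t \vec v_1 \\ S_t \vec v_1 + D^t \vec v' \end{pmatrix}, \qquad S_t = \sum_{i=0}^{t-1} D^i C A^{t-1-i}.$$
The induction hypothesis on $D$, applied to $\vec v'$, gives a normalization function and a limit eigenvector of $D$, which extends trivially (by zeros) to an eigenvector of $M$. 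Classical Perron--Frobenius for $A$ yields $A^t \vec v_1 \sim c\lambda_1^t \vec p_1$.

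The crux is the analysis of $S_t \vec v_1$. Substituting $A^{t-1-i}\vec v_1 \sim c\lambda_1^{t-1-i}\vec p_1$ and applying the induction hypothesis to $D$ acting on the non-negative vector $\vec w = C\vec p_1$ (which, after finitely many iterates of $D$, lies in the dominant interior of $\mathcal{C}(B_1)$ by Lemma \ref{initial-faces}), one obtains $D^i \vec w \sim (\lambda^*)^i\, i^{k^*-1}\vec q$, where $\lambda^*$ is the largest PF-eigenvalue among blocks of $C(B_1)$, $k^*$ is the length of a longest equal-eigenvalue chain realizing it, and $\vec q$ is a corresponding limit vector. Thus
$$S_t \vec v_1 \sim c\lambda_1^{t-1}\vec q \sum_{i=0}^{t-1} (\lambda^*/\lambda_1)^i\, i^{k^*-1},$$
whose asymptotics split into three cases: $\lambda^* < \lambda_1$ (geometric sum converges, growth $\lambda_1^t$), $\lambda^* > \lambda_1$ (dominant tail, growth $(\lambda^*)^t t^{k^*-1}$), or $\lambda^* = \lambda_1$ (polynomial accumulation, growth $\lambda_1^t t^{k^*}$). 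In every case the resulting growth type matches the prescribed $h_1(t) = \lambda_{\max}(B_1)^t\, t^{k-1}$.

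Finally I would combine $A^t \vec v_1$, $S_t \vec v_1$ and $D^t \vec v'$ by Lemma \ref{growth-type-sums}: the normalization function $h_{\vec v}$ is the one of maximal growth type among these three contributions, and the identification of the limit $\vec v_\infty$ as an eigenvector of $M$ follows by passing to the limit in
$$M\cdot\frac{M^t\vec v}{h_{\vec v}(t)} = \frac{h_{\vec v}(t+1)}{h_{\vec v}(t)}\cdot\frac{M^{t+1}\vec v}{h_{\vec v}(t+1)},$$
using that $h_{\vec v}(t+1)/h_{\vec v}(t)\to \lambda_{\max}$. The main obstacle I expect is the three-case analysis of $S_t \vec v_1$, together with the need to ensure that $\vec v_\infty\ne \vec 0$: nonvanishing should follow because non-negativity of $\vec v$, $C\vec p_1$ and $\vec q$ precludes cancellation, while Lemma \ref{initial-faces} guarantees that the dominant contribution actually lies in the dominant interior of its block cone, so that every block along a longest maximal-eigenvalue chain contributes to the limit.
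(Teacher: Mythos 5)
Your overall strategy coincides with the paper's: induction on the number of diagonal blocks, splitting off the top block $A=M_{1,1}$ from the rest $D$, a three-case analysis of the cross term according to whether the PF-eigenvalue $\lambda_1$ of $A$ is larger than, equal to, or smaller than the maximal eigenvalue $\lambda^*$ of the dependency blocks, and recombination of the three contributions by comparing growth types. Your $S_t\vec v_1=\sum_{i=0}^{t-1}D^iCA^{t-1-i}\vec v_1$ is exactly the paper's sum $\sum_{m=0}^{t-1}\lambda^{t-m-1}M^m\vec u_{t-m}$ in the proof of Proposition \ref{induction-step}, and the identification of the limit as an eigenvector via $h(t+1)/h(t)\to\lambda_{\max}$ is Remark \ref{eigenvalue}.

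There is, however, a genuine error in your treatment of the case $\lambda^*>\lambda_1$. You substitute $A^{t-1-i}\vec v_1\sim c\lambda_1^{t-1-i}\vec p_1$ uniformly in $i$ and reduce the whole sum to the single orbit $D^i(C\vec p_1)$, concluding $S_t\vec v_1\sim c\lambda_1^{t-1}\vec q\sum_i(\lambda^*/\lambda_1)^i i^{k^*-1}$. When $\lambda^*>\lambda_1$ the normalized sum is dominated by the terms with $i$ close to $t-1$, i.e.\ with $j:=t-1-i$ \emph{bounded}; for those terms $A^{j}\vec v_1$ is nowhere near the Perron--Frobenius direction, so the substitution is invalid exactly where it matters. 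The correct limit is a convergent series $\sum_{j\ge 0}(\lambda^*)^{-j-1}\vec q_j$, where $\vec q_j$ is the limit eigenvector produced by the induction hypothesis from the actual vector $CA^j\vec v_1$; this is why the paper's Case 3 limit genuinely depends on $\vec v_0$ and not merely through a scalar. Your final conclusion survives only because each $\vec q_j$ is a non-negative eigenvector with the same eigenvalue $\lambda^*$ (so the series is again such an eigenvector, and non-negativity prevents cancellation), but your argument as written does not establish this. A secondary, more routine gap: in all three cases the asymptotic substitutions inside the sum require a uniform bound on $\frac{1}{(\lambda^*)^i i^{k^*-1}}\|D^i\vec w\|$ over the varying vectors $\vec w=CA^{j}\vec v_1$ and a split of the summation range with explicit error control (this is the role of Lemma \ref{universal-constant} and the $\epsilon$--$s(\epsilon)$--$t_0(\epsilon)$ bookkeeping occupying most of Section \ref{theproof}); acknowledging the obstacle is not the same as overcoming it.
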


This result is proved by induction, and the induction step has some interesting features in itself, so that we pose it here as independent statement. But first we state a property which will be used below repeatedly:

\begin{defn}
\label{convergence-cond}
Let $M$ be as in Theorem \ref{GPFT}, and 
let $C$ be a union of matrix blocks such that the associated block cone $\cal C \subset \cal C^n$ is $M$-invariant, with growth type function $h_C = \lambda_*^t t^{d_*}$ for some value $\lambda_* \geq 1$. We say that $\cal C$ satisfies the {\em convergence condition} CC($\cal C$) if
%
for every vector $\vec u \in \cal C$ 
the sequence $\frac{1}{h_C(t)}M^t \vec u$ converges to a vector $\vec u_\infty$ which is either an 
eigenvector $\vec u_\infty \in \cal C$ of $M$, or else one has $\vec u_\infty = \vec 0$. We require furthermore that $\vec u_\infty \neq \vec 0$ if $\vec u$ is contained in the dominant interior of $\cal C$ (as defined 
above in Definition \ref{domin}).
\end{defn}

\begin{rem}
\label{eigenvalue}
(a)
For $\vec u_\infty$ as in Definition \ref{convergence-cond} the condition $\vec u_\infty \neq \vec 0$ implies  
directly 
that $\vec u_\infty$ is an eigenvector of $M$. 

\smallskip
\noindent
(b)
Its eigenvalue is always equal to $\lambda_*$, as follows directly from the following consideration:
$$M \vec u_\infty = M (\lim_{t \to \infty} \frac{1}{h_C(t)}M^t \vec u) =  (\lim_{t \to \infty} \frac{h_C(t+1)}{h_C(t)}\frac{1}{h_C(t+1)}M^{t+1} \vec u) =$$
$$ (\lim_{t \to \infty} \frac{h_C(t+1)}{h_C(t)}) \vec u_\infty = (\lim_{t \to \infty} \frac{\lambda_*^{t+1} (t+1)^{d_*}}{\lambda_*^{t} (t)^{d_*}}) \vec u_\infty = \lambda_*\vec u_\infty$$
\end{rem}

\begin{prop}
\label{induction-step}
Let $M$ be a non-negative integer square matrix
which is in primitive Frobenius form,
with no zero columns. 
Let $B$ be any block of the associated block decomposition, and let $C := C(B)$
be the corresponding dependency block union
(see \S \ref{normalization}).
Let $\cal B$ and $\cal C$ be the block cones associated to $B$ and $C$ respectively.

Let $\lambda \geq 0$ 
 and $\lambda_u \ge 1$ be the maximal PF-eigenvalues of $B$ and $C$ respectively, and let $h: t \mapsto \lambda_*^t t^{d}$ 
(for $\lambda_* = \max\{\lambda, \lambda_u\}$) and $h_u: t \mapsto \lambda_u^t t^{d_u}$
be the growth type functions for $B$ and $C$ respectively (see \S\ref{frobform}).

Assume that  $\cal C$ satisfies the above convergence condition CC($\cal C$). Then for every vector $\vec 0 \neq \vec v_0 \in \cal B$ the sequence 
$$\vec v_t := \frac{1}{h(t)} M^t \vec v_0$$
converges to an eigenvector $\vec w_\infty$
of $M$ which satisfies:
\begin{enumerate}
\item 
If $\lambda > \lambda_u$ then $\vec w_\infty = \lambda(\vec v_0) (\vec v_\infty + \vec w_0)$, where $\vec v_\infty$ is the extended PF-eigenvector (see Definition \ref{primitive-F} (3)) of the primitive diagonal block of $M$ corresponding to $B$,
the vector $\vec w_0 \in \cal C$ is entirely determined by $\vec v_\infty$, and $\lambda(\vec v_0) \in \R_{> 0}$ depends on $\vec v_0$.
\item
If $\lambda = \lambda_u$ then $\vec w_\infty = \lambda(\vec v_0) \vec u_\infty$, where $\vec u_\infty \neq \vec 0$ is an eigenvector of $\cal C$ that depends only on the above 
extended PF-eigenvector $\vec v_\infty$, 
and $\lambda(\vec v_0) \in \R_{> 0}$ depends on $\vec v_0$.
\item
If $\lambda < \lambda_u$ then $\vec w_\infty \neq \vec 0$ is an eigenvector of $\cal C$ that may well depend on the choice of $\vec v_0$.
\end{enumerate}
\end{prop}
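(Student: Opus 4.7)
The plan is to exploit the lower triangular block structure of $M$ restricted to the $M$-invariant cone $\cal B + \cal C$. Writing this restriction as
$\begin{pmatrix} A & 0 \\ P & N \end{pmatrix}$,
where $A$ is the primitive diagonal block for $B$ (with PF-eigenvalue $\lambda$ and extended PF-eigenvector $\vec v_\infty$), $N$ is the restriction of $M$ to $\cal C$, and $P$ is the off-diagonal block from $\cal B$ to $\cal C$, one obtains by induction on $t$ the identity
\[
M^t \vec v_0 \;=\; A^t \vec v_0 \;+\; \sum_{k=0}^{t-1} N^{t-1-k} P A^k \vec v_0\, .
\]
The classical Perron--Frobenius theorem applied to the primitive block $A$ provides a scalar $c = c(\vec v_0) > 0$ with $A^k \vec v_0 / \lambda^k \to c \vec v_\infty$, with exponential control of the error; the hypothesis CC($\cal C$) similarly governs the iterates $N^{t-1-k}$ on any vector in $\cal C$. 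The proof then splits into the three cases of the statement, and in each case the scalar $\lambda(\vec v_0)$ will come out as a positive multiple of $c(\vec v_0)$.

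For Case~(1), since $\lambda > \lambda_u$ the spectral radius of $N$ on $\cal C$ is at most $\lambda_u < \lambda$, so $\lambda I - N$ is invertible on $\cal C$ and one can define
$\vec w_0 := (\lambda I - N)^{-1} P \vec v_\infty \in \cal C$.
A direct check shows $M(\vec v_\infty + \vec w_0) = \lambda(\vec v_\infty + \vec w_0)$. To verify $M^t \vec v_0 / \lambda^t \to c(\vec v_0)(\vec v_\infty + \vec w_0)$, I would substitute the Perron--Frobenius approximation for $A^k \vec v_0$ into the sum, re-index via $j = t-1-k$, and recognize the resulting expression as a Neumann series $\sum_{j \geq 0} (N/\lambda)^j$ applied to $P \vec v_\infty$ (convergent because $\lambda > \lambda_u$). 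The tail contributions from the Perron--Frobenius error term form a geometric series and vanish in the limit.

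For Case~(3), the growth function is $h = h_u$ and $A^t \vec v_0 / h_u(t) \to \vec 0$ since $\lambda < \lambda_u$. I would use Lemma~\ref{initial-faces} to find $t_0$ and decompose $M^{t_0}\vec v_0 = \vec v^* + \vec u^*$ with $\vec u^*$ in the dominant interior of $\cal C$. For $t \geq t_0$ one has $M^t \vec v_0 = A^{t-t_0}\vec v^* + \sum_{k \geq 0} N^{t-t_0-1-k} P A^k \vec v^* + N^{t-t_0} \vec u^*$; the first summand vanishes after normalization by $h_u(t)$, each remaining summand lies in $\cal C$, and CC($\cal C$) applied to $N^{t-t_0}\vec u^*$ alone yields a non-zero limit in the dominant interior. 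The full sum converges as well, by applying CC($\cal C$) term-by-term to $N^{t-1-k}(PA^k \vec v_0)$ and using that $\|PA^k \vec v_0\| = O(\lambda^k)$ is geometrically smaller than $h_u$-growth, so a dominated-convergence argument legitimizes the termwise passage to the limit.

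The main obstacle is Case~(2), where $\lambda = \lambda_u$ forces $h(t) = \lambda^t t^{d_u+1}$ and both the $A^t$ term and each individual summand contribute to the same order. Here one substitutes the CC($\cal C$) asymptotics $N^{t-1-k}(PA^k \vec v_0) \sim h_u(t-1-k)\,(PA^k \vec v_0)_\infty$ and the Perron--Frobenius asymptotics $PA^k \vec v_0 \sim c \lambda^k P \vec v_\infty$, so the sum behaves like
\[
c\,(P\vec v_\infty)_\infty \sum_{k=0}^{t-1} \lambda^k\, \lambda^{t-1-k} (t-1-k)^{d_u}
= c\,\lambda^{t-1}\,(P\vec v_\infty)_\infty \sum_{k=0}^{t-1} (t-1-k)^{d_u}\, .
\]
Recognizing the last sum as a Riemann-sum approximation to $t^{d_u+1}/(d_u+1)$ and dividing by $h(t)$ produces the limit $\vec w_\infty = \frac{c}{\lambda(d_u+1)} (P \vec v_\infty)_\infty$, which depends on $\vec v_0$ only through $c = c(\vec v_0)$ and is thus the required $\lambda(\vec v_0) \vec u_\infty$ with $\vec u_\infty = (P\vec v_\infty)_\infty/(\lambda(d_u+1))$ depending only on $\vec v_\infty$. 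The delicate point is to justify interchanging the two limits (the CC($\cal C$) asymptotic in the inner iterates $N^{t-1-k}$, uniformly in $k$, and the Perron--Frobenius one in $A^k$), which requires combining explicit rates of convergence from both hypotheses and treating the boundary ranges of $k$ (near $0$ and near $t$) separately so that a small fraction of anomalous terms contributes negligibly to the normalized sum. Non-vanishing of $\vec w_\infty$ is obtained by observing that the $\cal C$-component of $M^t \vec v_0$ lies in the dominant interior of $\cal C$ for all large $t$ (Lemma~\ref{initial-faces}), so the limit within $\cal C$ must be non-zero.
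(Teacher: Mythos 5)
Your proposal is correct and follows essentially the same route as the paper: the identity $M^t\vec v_0 = A^t\vec v_0 + \sum_{k=0}^{t-1}N^{t-1-k}PA^k\vec v_0$ is exactly the paper's iterative decomposition $M^t\vec v_0 = \lambda^t\vec v_t + \sum\lambda^{t-m-1}M^m\vec u_{t-m}$ written in block-matrix form, and in each of the three cases you invoke the same ingredients (classical Perron--Frobenius on the primitive block, CC($\cal C$) with uniform bounds on the lower cone, the Neumann series $(\lambda I - N)^{-1}$ in Case 1, the Riemann sum $\sum m^{d_u}\sim t^{d_u+1}/(d_u+1)$ in Case 2, the geometric tail in $(\lambda/\lambda_u)^j$ in Case 3, and Lemma \ref{initial-faces} for non-vanishing). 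The paper executes the limit-interchange you flag as delicate in Case 2 by the same three-range splitting of the sum that you describe.
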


Before proving Proposition \ref{induction-step} in section \ref{theproof}, we first show how to derive Theorem \ref{GPFT} from Proposition \ref{induction-step}.  We first show that Proposition \ref{induction-step} also implies the following:

\begin{lem}
\label{weak-interior}
Assume that $B$ and $C$ as well as $\cal B$ and $\cal C$ are as in Proposition \ref{induction-step}.
Then we have:

\smallskip
\noindent
(1) 
The cone $\cal B + \cal C$ associated to the block union $B \cup C$ satisfies the convergency condition {\rm CC($\cal B + \cal C$)}.

\smallskip
\noindent
(2)
Assume that $\cal C$ is contained in a larger block cone $\cal C'$ with growth type function $h'$, and assume that 
$\cal C'$ satisfies the convergency condition {\rm CC($\cal C'$)}. Then the cone $\cal B +\cal C'$ also satisfies the convergency condition {\rm CC($\cal B + \cal C'$)}.
\end{lem}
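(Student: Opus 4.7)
The plan is to prove both parts simultaneously by decomposing an arbitrary vector along the block decomposition, applying the hypotheses separately to each piece, and then combining the pieces via Lemma \ref{growth-type-sums}. The main obstacle will be the case analysis needed to verify non-vanishing on the dominant interior; the convergence itself is largely a matter of comparing growth types.

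For part (1), I would first write any $\vec v \in \cal B + \cal C$ as $\vec v = \vec v_{\cal B} + \vec v_{\cal C}$ with $\vec v_{\cal B} \in \cal B$ and $\vec v_{\cal C} \in \cal C$. Applying Proposition \ref{induction-step} to $\vec v_{\cal B}$ yields $\tfrac{1}{h(t)}M^t\vec v_{\cal B} \to \vec w_\infty^{\cal B}$, where $\vec w_\infty^{\cal B}$ is either zero or an eigenvector with eigenvalue $\lambda_* = \max(\lambda,\lambda_u)$ (by Remark \ref{eigenvalue}(b)). The hypothesis CC($\cal C$) gives $\tfrac{1}{h_u(t)}M^t\vec v_{\cal C} \to \vec u_\infty$, which is zero or an eigenvector with eigenvalue $\lambda_u$. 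The next step is the elementary growth-type comparison: in the cases $\lambda>\lambda_u$ and $\lambda=\lambda_u$ the function $h$ strictly dominates $h_u$ (since $B$ either raises the eigenvalue or extends the longest chain by one block), whereas for $\lambda<\lambda_u$ one has $h=h_u$. By Lemma \ref{growth-type-sums}, $h_{\cal B+\cal C}=h$ is a normalization function for $\cal B + \cal C$, and $\tfrac{1}{h(t)}M^t\vec v$ converges. In the first two cases the second summand rescales to $\vec 0$ and the limit is $\vec w_\infty^{\cal B}$. In the case $\lambda<\lambda_u$ both terms contribute, and since both $\vec w_\infty^{\cal B}$ and $\vec u_\infty$ lie in $\cal C$ and are eigenvectors with the common eigenvalue $\lambda_u$, their sum is again an eigenvector.

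The non-vanishing on the dominant interior is a case-by-case check. If $\lambda>\lambda_u$ or $\lambda=\lambda_u$, the block $B$ itself belongs to the longest chain realizing $h_{\cal B + \cal C}$, so any $\vec v$ in the dominant interior has $\vec v_{\cal B}\neq \vec 0$; Proposition \ref{induction-step}(1)--(2) then ensures $\vec w_\infty^{\cal B}\neq \vec 0$. If $\lambda<\lambda_u$, the longest chain lies entirely in $C$, so $\vec v_{\cal C}$ must lie in the dominant interior of $\cal C$, whence CC($\cal C$) gives $\vec u_\infty\neq \vec 0$; the non-negativity of $\vec w_\infty^{\cal B}$ then makes the sum nonzero.

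Part (2) proceeds in parallel, writing $\vec v = \vec v_{\cal B} + \vec v_{\cal C'}$ and applying Proposition \ref{induction-step} to $\vec v_{\cal B}$ (noting that iterates of $\vec v_{\cal B}$ still land in $\cal B + \cal C\subseteq \cal B + \cal C'$ by $M$-invariance, so $h$ is still the relevant growth type for $\vec v_{\cal B}$, because any block of $\cal C'\smsm\cal C$ is by construction not in $C(B)$) and CC($\cal C'$) to $\vec v_{\cal C'}$. The growth function of $\cal B + \cal C'$ is $\max(h,h')$. If one strictly dominates, the smaller term rescales to zero and the limit is the surviving eigenvector; if $h$ and $h'$ have the same growth type, both limits have the common eigenvalue (this common eigenvalue is forced by Remark \ref{eigenvalue}(b), since both $h$ and $h'$ have that same base), and their sum is again an eigenvector. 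Non-vanishing on the dominant interior of $\cal B + \cal C'$ follows by a case split identical to above: the longest max-eigenvalue chain either passes through $B$ (in which case Proposition \ref{induction-step} gives a nonzero contribution from $\vec v_{\cal B}$) or lies entirely in $\cal C'$ (in which case CC($\cal C'$) gives a nonzero contribution from $\vec v_{\cal C'}$), and in either case non-negativity prevents cancellation.
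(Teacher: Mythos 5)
Your proposal is correct and follows essentially the same route as the paper's proof: decompose a vector into its $\cal B$- and $\cal C$- (resp. $\cal C'$-) components, compare the growth type functions $h$, $h_u$, $h'$ case by case according to whether $B$ participates in a longest maximal-eigenvalue chain, and combine the limits via Lemma \ref{growth-type-sums}, with Proposition \ref{induction-step} supplying the non-vanishing on the $\cal B$-side and CC($\cal C$) resp.\ CC($\cal C'$) on the other. The only (shared) informality is that Lemma \ref{growth-type-sums} is stated for genuine normalization functions, i.e.\ nonzero limits, so the degenerate cases where one component's limit vanishes need the same elementary rescaling remark the paper itself leaves implicit.
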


\begin{proof}
(1)
If $B$ belongs to the blocks of $B \cup C$ that determine the dominant interior of $\cal B + \cal C$, then the 
eigenvalue of the PF-eigenvector of $B$ satisfies $\lambda \geq \lambda_u 
\geq 
1$, and 
is maximal among all PF-eigenvalues for blocks in $B \cup C$. 
If $\lambda>\lambda_{u}$, then 
\[
\lim_{t\to\infty}\frac{h_u(t)}{h(t)} = \lim_{t\to\infty}\frac{\lambda_u^t t^{d_u}}{\lambda^t} = 0.\]
If $\lambda=\lambda_{u}$ and hence $d=d_u+1$, we have 
\[
\lim_{t\to\infty} \frac{h_u(t)}{h(t)} = \lim_{t\to\infty}\frac{\lambda_u^t t^{d_u}}{\lambda^t t^d} = \lim_{t\to\infty} \frac{1}{t} = 0.
\]

We note that 
case (3) of Proposition \ref{induction-step} is excluded by the inequalities 
$\lambda \geq \lambda_u$,
and that 
in cases (1) and (2) of Proposition \ref{induction-step} our claim $\underset{t\to\infty}{\lim} \frac{1}{h(t)} M^t \vec v \neq \vec 0$ is explicitly stated for any non-zero $\vec v \in \cal B$. 
For arbitrary $\vec v$ in the dominant interior of $\cal B + \cal C$ we conclude the claim from $\underset{t\to\infty}{\lim} \frac{h_u(t)}{h(t)} = 0$ and from Lemma \ref{growth-type-sums} (1).

If $B$ does not belong to the blocks of $B \cup C$ that determine the dominant interior, then we have $\lambda_u > \lambda$, so that we are in case (3) of Proposition \ref{induction-step}:
In this case, however, any vector in the dominant interior of $\cal B + \cal C$ must also belong to
the dominant interior of $\cal C$. 
The growth type function for $B \cup C$ is given by $h = h_u$, and 
hence 
the claim follows from our assumption 
CC($\cal C$). 

\smallskip
\noindent
(2)
Similar to the situation considered above in the proof of (1), if $B$ does not belong to the blocks that determine the dominant interior of $\cal B + \cal C'$, then any vector in the dominant interior of $\cal B + \cal C'$ must also belong to the dominant interior of $\cal C'$, and the growth type function for $\cal B + \cal C'$ is equal to that for $\cal C'$, so that the claim follows from the assumption
CC($\cal C'$). 

If on the other hand $B$ belongs to the blocks that determine the dominant interior of $\cal B + \cal C'$, then the growth type function for $\cal B + \cal C'$ is equal to that of $B$, so that part (1) shows that the limit vector is non-zero for any $\vec v \neq \vec 0$ in the dominant interior of $\cal B + \cal C$. Any vector $\vec w$ in the dominant interior of $\cal B + \cal C'$ can be written as sum
$\vec w = \vec v + \vec u + \vec w_0$ 
where
$\vec w_0$ belongs to $\cal B + \cal C'$ but not to its dominant interior, while
$\vec v$ lies in the dominant interior of $\cal B + \cal C$ and $\vec u$ in the dominant interior of $\cal C'$, and at least one of them is non-zero.
Thus we deduce
the claim follows directly from Lemma \ref{growth-type-sums}, applied to $\vec v$ and $\vec u$.

\end{proof}

We will now prove Theorem \ref{GPFT}, assuming the results of Proposition \ref{induction-step}.
The proof of Proposition \ref{induction-step} is deferred to section \ref{theproof}. 

\begin{proof}[Proof of Theorem \ref{GPFT}] 
Consider the block decomposition of $M$ according to its primitive Frobenius form, and denote by $B$ the top matrix block. Let $C = C(B)$ be the corresponding dependency block union.

If $C$ is empty, then $B$ is minimal with respect to the partial order on blocks
(as defined in subsection \ref{block-matrices}).
In this case, from the assumption that $M$ has no zero columns, it follows that $B$ is not a zero matrix. Hence the claim of Theorem \ref{GPFT} 
for any vector $\vec v \in \cal B$ 
follows directly from the classical Perron-Frobenius theory.

If $C$ is non-empty, 
it follows from the previously considered case that the maximal eigenvalue for $C$ satisfies $\lambda_u \geq 1$. 
Thus via induction over the number of blocks contained in $C$
we can invoke Lemma \ref{weak-interior} (2) 
to obtain that 
the convergency condition CC($\cal C$) holds.

We can hence apply Proposition \ref{induction-step} to get directly the 
the claim of Theorem \ref{GPFT} 
for any non-negative vector $\vec v \in \cal B$.

We can then assume by induction that the claim of Theorem \ref{GPFT} is true for any vector $\vec u \neq \vec 0$ that has zero-coefficients in the $B$-coordinates. 
Now, an arbitrary vector $\vec w \neq \vec 0$ in the non-negative cone $\cal{C}^n$ 
can be written 
as a sum $\vec w = \vec v + \vec u$, with $\vec v$ and $\vec u$ as before, and at least one of them 
is 
different from $\vec 0$. Hence the claim of Theorem \ref{GPFT} follows from Lemma \ref{growth-type-sums}.
\end{proof}

\begin{rem}
The last proof also shows the following slight improvement of Theorem \ref{GPFT}:
For every primitive block $B_i$ of the Frobenius form of $M$, and for any vector $\vec v \neq \vec 0$ in the associated non-negative cone $\cal B_i$, the normalization function 
$h_{\vec v}$ from Theorem \ref{GPFT}
for the family $(M^t \vec v)_{t \in \N}$  is of the same growth type as the function $h_i$ defined in section \ref{frobform}.
\end{rem}

Recall from section \ref{prelims*} that
\[
{\big \|}\sum a_i \vec e_i {\big \|} = \sum |a_i|\, . 
\]
The following 
elementary observation is repeatedly used in the next section.

\begin{lem}
\label{universal-constant}
Let $M$ be a non-negative integer $(n \times n)$-matrix. Assume that there exists a 
function $h: \N \to \R_{>0}$ such that for any vector $\vec u$ in the non-negative cone $\cal C^n = (\R_{\geq 0})^n$
the sequence
\[
\frac{1}{h(t)}M^t \vec u
\]
converges to a limit vector $\vec u_\infty \in 
\cal C^n$ 
which is either equal to $\vec 0$ or else an eigenvector $\vec u_\infty \in 
\cal C^n$ 
of $M$.




Then there is a ``universal constant'' $K = K(\cal C) > 0$ which satisfies:
$$
\frac{1}{h(t)}\frac{||M^t \vec v||}{||\vec v||} \leq K 
$$
for any $t\in\N$ and for any (not necessarily non-negative) $\vec v \in \R^n$. \end{lem}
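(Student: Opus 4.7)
The plan is to first establish the estimate on the standard basis vectors, and then extend it to an arbitrary vector in $\mathbb{R}^n$ using linearity of $M^t$ and the triangle inequality for the $\ell_1$-norm.

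First, I would apply the hypothesis to each coordinate vector $\vec e_i \in \mathcal{C}^n$ (for $i=1,\ldots,n$): by assumption, the sequence $\tfrac{1}{h(t)} M^t \vec e_i$ converges in $\mathbb{R}^n$ to some limit $\vec v_i^\infty$. Since every convergent sequence in $\mathbb{R}^n$ is bounded in the $\ell_1$-norm, there exists a constant $K_i > 0$ such that
\[
\frac{\|M^t \vec e_i\|}{h(t)} \le K_i \quad \text{for all } t \in \mathbb{N}.
\]
Setting $K := \max_{1 \le i \le n} K_i$ yields a single bound that works for all standard basis vectors simultaneously.

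Next, for an arbitrary vector $\vec v = \sum_{i=1}^n a_i \vec e_i \in \mathbb{R}^n$ (possibly with negative coefficients), I would use the linearity of $M^t$ together with the triangle inequality for $\|\cdot\|$:
\[
\frac{\|M^t \vec v\|}{h(t)} \;=\; \frac{1}{h(t)} \Bigl\|\sum_{i=1}^n a_i\, M^t \vec e_i\Bigr\| \;\le\; \sum_{i=1}^n |a_i|\, \frac{\|M^t \vec e_i\|}{h(t)} \;\le\; K \sum_{i=1}^n |a_i| \;=\; K\,\|\vec v\|.
\]
Dividing by $\|\vec v\|$ (assuming $\vec v \neq \vec 0$, with the inequality being trivial otherwise) gives the desired bound with the universal constant $K = K(\mathcal{C})$, which depends only on $M$ and $h$.

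There is no real obstacle here: the argument is essentially a standard passage from pointwise convergence on a spanning set to a uniform bound via linearity. The only point worth noting is that the hypothesis is applied purely on the non-negative cone (to the vectors $\vec e_i$), but the conclusion extends to all of $\mathbb{R}^n$ because the $\ell_1$-norm respects the absolute values of coordinates in the expansion $\vec v = \sum a_i \vec e_i$.
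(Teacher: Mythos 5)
Your proposal is correct and follows essentially the same route as the paper's proof: bound $\tfrac{1}{h(t)}\|M^t\vec e_i\|$ uniformly using convergence (hence boundedness) of the sequences on the coordinate vectors, then extend to arbitrary $\vec v=\sum a_i\vec e_i$ by linearity and the triangle inequality for the $\ell_1$-norm. Your bookkeeping is in fact slightly cleaner (you get the constant $\max_i K_i$ directly, whereas the paper settles for $nK_0$), but this is an immaterial difference.
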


\begin{proof}
We first consider the finitely many coordinate vectors $\vec e_i$ from the canonical base of $\R^n$ and observe that the hypothesis 
$$
\lim_{t \to \infty}\frac{1}{h_(t)}M^t \vec e_i = \vec u_\infty^i
$$
for some $\vec u_\infty^i \in \cal C$ implies the existence of a constant $K_0 > 0$ with
$$
\frac{1}{h_(t)}\frac{||M^t \vec e_i||}{||\vec e_i||} \leq K_0 
$$
for any $t \in \N$ and any $i = 1, \ldots, n$. 

An arbitrary vector $\vec v = \sum a_i \vec e_i \in \R^n$ satisfies $||\vec v|| = \sum |a_i| \geq |a_i|\cdot ||\vec e_i||$, which gives
$$
\frac{1}{h(t)}\frac{||M^t \vec v||}{||\vec v||} = 
\frac{1}{h(t)}\frac{||\sum_{i = 1}^n a_i M^t \vec e_i||}{||\vec v||} \leq 
\frac{1}{h(t)}\sum_{i = 1}^n \frac{|a_i|\cdot||  M^t \vec e_i||}{||\vec v||} \leq
$$
$$
\frac{1}{h(t)}\sum_{i = 1}^n \frac{|a_i|\cdot||  M^t \vec e_i||}{|a_i|\cdot ||\vec e_i||} \leq 
\sum_{i = 1}^n \frac{1}{h(t)}\frac{|  M^t \vec e_i||}{||\vec e_i||} \leq 
 n K_0 \, ,
$$
thus proving the claim for $K(\cal C) := n K_0$.
\end{proof}

\section{Proof of the Proposition \ref{induction-step}}
\label{theproof}

Let us consider an arbitrary vector $\vec 0 \neq \vec v_0 \in \cal B$, and define iteratively, for any integer $t \geq 1$, vectors $v_t \in \cal B$ and $u_t \in \cal C$ through 
\[
M \vec  v_{t-1} = \lambda \vec v_t + \vec u_t.
\]
Therefore, for any $t \geq 1$, we compute
\[
M^t \vec v_0
 = \lambda^t \vec v_t + \sum_{k=0}^{t-1} \lambda^{k} M^{t-k-1} \vec u_{k+1}
 = \lambda^t \vec v_t + \sum_{m=0}^{t-1} \lambda^{t-m-1} M^{m} \vec u_{t-m}.
\]

\medskip
\noindent
{\bf Case 1:} Assume that $\lambda_u < \lambda$.  
\smallskip

In this case  
the diagonal block $M_{ii}$ of $M$ corresponding to $B$
is primitive. Let $\vec{v} \in \cal B$ be 
the extended PF-eigenvector of $M$ as given in section \ref{frobform}.
 
Let $\vec u \in \cal C$ be the non-negative vector determined by the equation
\[
M \vec v = \lambda \vec v + \vec{u} \, .
\]
Then we compute:
\begin{align*}
\frac{1}{\lambda^t} M^t \vec v
&= \frac{1}{\lambda^t} (\lambda^t \vec v + \sum_{m=0}^{t-1} \lambda^{t-m-1} M^{m} \vec u)\\
&= \vec v +  \frac{1}{\lambda}\sum_{m=0}^{t-1} 
 \frac{\lambda_u^{m} \cdot {m}^{d_u}}{\lambda^{m}}
 \frac{1}{\lambda_u^{m} \cdot {m}^{d_u}} M^{m} \vec u
\end{align*}
Recall that, 
since $\vec u \in \cal C$, by assumption there is a vector $\vec u_\infty \in \cal C$ with
\[
\lim_{m\to\infty}\frac{1}{\lambda_u^{m} \cdot {m}^{d_u}} M^{m} \vec u=\vec{u}_{\infty}.
\]
Hence we deduce that for some constant $K\ge0$ one has
\[
\|\frac{1}{\lambda_u^{m} \cdot {m}^{d_u}} M^{m} \vec{u}\| \leq K\]
for all $m\ge1$. From here it follows that the series
\[
\sum_{m=0}^{\infty}\frac{\lambda_{u}^m\cdot m^{d_u}}{\lambda^m}\frac{1}{\lambda_{u}^m\cdot m^{d_u}}M^m\vec{u}=\sum_{m=0}^{\infty}\left(\frac{\lambda_u}{\lambda}\right)^m\cdot m^{d_u}\frac{1}{\lambda_{u}^m\cdot m^{d_u}}M^m\vec{u}
\]
is convergent. 
Set
\[
\vec w :=\sum_{m=0}^{\infty} 
 \frac{\lambda_u^{m} \cdot {m}^{d_u}}{\lambda^{m}}
 \frac{1}{\lambda_u^{m} \cdot {m}^{d_u}} M^{m} \vec u=\sum_{m=0}^{\infty}\frac{1}{\lambda^m}M^m\vec{u}.
 \]

We now observe:
\begin{align*}
\frac{1}{\lambda} M (\vec v + \frac{1}{\lambda} \vec w)&=\frac{1}{\lambda}\left(\lambda\vec{v}+\vec{u}+\frac{1}{\lambda}\sum_{m=0}^{\infty} 
\frac{1}{\lambda^{m}} M^{m+1} \vec u\right)
\\&=\frac{1}{\lambda}\left(\lambda\vec{v}+\vec{u}+\sum_{m=1}^{\infty} 
\frac{1}{\lambda^{m}} M^{m} \vec u\right)
\\&=\frac{1}{\lambda}\left(\lambda\vec{v}+\vec{u}+\vec{w}-\vec{u}\right) 
\\&=\vec v + \frac{1}{\lambda} \vec{w}
\end{align*}
In other words, $\vec v + \frac{1}{\lambda} \vec w$ is an eigenvector of $M$ with eigenvalue $\lambda$ which is contained in the non-negative cone $\cal B + \cal C$ spanned by $B$ and $C$. 

\medskip

We now consider an arbitrary vector $\vec v_0 \in \cal B$, as well as the vectors  $v_t \in \cal B$ and $u_t \in \cal C$ as defined iteratively 
at the beginning of this section.
For any integer $s$ with $1 \leq s \leq t-1$, we have 
\begin{align*}
\frac{1}{\lambda^t} M^t \vec v_0
&= \frac{1}{\lambda^t} (\lambda^t \vec v_t + \sum_{m=0}^{t-1} \lambda^{t-m-1} M^{m} \vec u_{t-m})
\\&= \vec v_t +  \frac{1}{\lambda}\sum_{m=0}^{t-1} 
 \frac{\lambda_u^{m} \cdot {m}^{d_u}}{\lambda^{m}}
 \frac{1}{\lambda_u^{m} \cdot {m}^{d_u}} M^{m} \vec u_{t-m}
 \\&= \vec v_t 
 + \frac{1}{\lambda}\sum_{m=0}^{s} 
 \frac{\lambda_u^{m} \cdot {m}^{d_u}}{\lambda^{m}}
 \frac{1}{\lambda_u^{m} \cdot {m}^{d_u}} M^{m} \vec u_{t-m}
 + \frac{1}{\lambda}\sum_{m=s+1}^{t-1} 
 \frac{\lambda_u^{m} \cdot {m}^{d_u}}{\lambda^{m}}
 \frac{1}{\lambda_u^{m} \cdot {m}^{d_u}} M^{m} \vec u_{t-m}
 \\
 &\tag{$\dagger$}\label{firstsum}= \vec v_t 
 + \frac{1}{\lambda}\sum_{m=0}^{s} 
 \frac{\lambda_u^{m} \cdot {m}^{d_u}}{\lambda^{m}}
 \frac{1}{\lambda_u^{m} \cdot {m}^{d_u}} M^{m} \vec u_{t-m}
 + (\frac{\lambda_u}{\lambda})^{s}
\frac{1}{\lambda}\sum_{m=s+1}^{t-1} 
 \frac{\lambda_u^{m-s} \cdot {m}^{d_u}}{\lambda^{m-s}}
 \frac{1}{\lambda_u^{m} \cdot {m}^{d_u}} M^{m} \vec u_{t-m} 
\end{align*}

We now consider the limit of this sum for $t \to \infty$: By the classical Perron-Frobenius theorem for primitive non-negative matrices we have 
\[
\lim_{t\to\infty} \vec v_t = \lambda' \vec v
\] for some $\lambda' > 0$. 
From our definition of the $\vec v_t$ and $\vec u_t$ it follows that their lengths 
$\|\vec v_t\|$ and $\|\vec u_t\|$
are uniformly bounded. We can hence apply Lemma \ref{universal-constant} 
to the subspace $\R^m \subset \R^n$ generated by $\cal C$ in order
to deduce that 
there is a uniform bound to the length of any of the $\frac{1}{\lambda_u^{m} \cdot {m}^{d_u}} M^{m} \vec u_{t-m}$. Hence for any $s \geq 0$ the sum
\[
\sum_{m=s+1}^{t-1} 
 \frac{\lambda_u^{m-s} \cdot {m}^{d_u}}{\lambda^{m-s}}
 \frac{1}{\lambda_u^{m} \cdot {m}^{d_u}} M^{m} \vec u_{t-m}
\]
converges for $t \to \infty$. As a consequence, for any $\epsilon > 0$ there is a value $s = s(\epsilon) \geq 0$ such that for any $t \geq s+2$ the third term of the above sum (\ref{firstsum}) satisfies: 
\[
\left\|(\frac{\lambda_u}{\lambda})^{s}
\ss
\frac{1}{\lambda}
\sum_{m=s+1}^{t-1} 
 \frac{\lambda_u^{m-s} \cdot {m}^{d_u}}{\lambda^{m-s}}
 \frac{1}{\lambda_u^{m} \cdot {m}^{d_u}} M^{m} \vec u_{t-m}\right\|
 \leq \epsilon.
\]

On the other hand, for large values of $t$ the vectors $\vec v_{t-m-1}$ will be close to $\lambda' \vec v$, and hence $\vec u_{t-m}$ will be close to $\lambda' \vec u$, for $\vec u$ as defined above by means of the eigenvector $\vec v$. That is,
for any $\epsilon > 0$ there is a bound $t_0 = t_0(\epsilon) \geq 0$ such that for any $t \geq t_0$ there is a (not necessarily non-negative !) vector $\vec w_t$ of length 
\[
\|\vec w_t\| \leq \epsilon\]
with $\vec u_{t} = \lambda' \vec u + \vec w_{t}$. This gives, for any $s \leq t - t_0\,$:
\begin{align*} 
&\frac{1}{\lambda}\sum_{m=0}^{s} 
 \frac{\lambda_u^{m} \cdot {m}^{d_u}}{\lambda^{m}}
 \frac{1}{\lambda_u^{m} \cdot {m}^{d_u}} M^{m} \vec u_{t-m}
=\\&
\frac{\lambda'}{\lambda}\sum_{m=0}^{s} 
 \frac{\lambda_u^{m} \cdot {m}^{d_u}}{\lambda^{m}}
 \frac{1}{\lambda_u^{m} \cdot {m}^{d_u}} M^{m} \vec u
+
\frac{1}{\lambda}\sum_{m=0}^{s} 
 \frac{\lambda_u^{m} \cdot {m}^{d_u}}{\lambda^{m}}
 \frac{1}{\lambda_u^{m} \cdot {m}^{d_u}} M^{m} \vec w_{t-m}.
\end{align*}

We compute
\begin{align*}
&\left\|\frac{1}{\lambda}\sum_{m=0}^{s} 
\frac{\lambda_u^{m} \cdot {m}^{d_u}}{\lambda^{m}} 
\frac{1}{\lambda_u^{m} \cdot {m}^{d_u}} M^{m} \vec w_{t-m}\right\|
\\& 
\leq \frac{1}{\lambda}\sum_{m=0}^{s} 
\frac{\lambda_u^{m} \cdot {m}^{d_u}}{\lambda^{m}} 
\left\|\frac{1}{\lambda_u^{m} \cdot {m}^{d_u}} M^{m} \vec w_{t-m}\right\|
\\& 
\leq \frac{1}{\lambda}\sum_{m=0}^{s} 
\frac{\lambda_u^{m} \cdot {m}^{d_u}}{\lambda^{m}} 
K(\epsilon),
\end{align*}
where $K(\epsilon)$ is the constant from Lemma \ref{universal-constant} 
(again applied to the subspace generated by $\cal C$). 
As a consequence, for 
any $t \geq s + t_0(\epsilon)$
and some constant $K'$ which only depends on $\cal C$
the second term in the above sum \label{firstsum}
will be $\epsilon K'$-close to 
\[
\frac{\lambda'}{\lambda}\sum_{m=0}^{s} 
 \frac{\lambda_u^{m} \cdot {m}^{d_u}}{\lambda^{m}}
 \frac{1}{\lambda_u^{m} \cdot {m}^{d_u}} M^{m} \vec u\, ,
\]
which converges (according to the above definition of $\vec w$) 
to $\dfrac{\lambda'}{\lambda} \vec w$ as $s$ tends to infinity. 

Given $\epsilon > 0$, use the first part of our considerations to find $s = s(\epsilon)$ which ensures that the third term in the above sum \ref{firstsum} is smaller than $\epsilon$. We then find $t_0 = t_0(\frac{\epsilon}{K'})$, and consider any value $t \geq t_0 + s$. The above derived estimates 
give 
\[
\frac{1}{\lambda^t}M^t \vec v_0 = v_t + \frac{\lambda'}{\lambda} \vec w + \vec w^*_t,
\]
where $\vec w^*_t$ is a (not necessarily non-negative)
 error term that satisfies $\| \vec w^*_t\| \leq\epsilon$.
 
Therefore we obtain
\[
\lim_{t\to \infty}  \frac{1}{\lambda^t} M^t \vec  v_{0} = \lambda' (\vec v + \frac{1}{\lambda} \vec w)\,,
\]
which proves the claim for $\vec w_0 = \frac{1}{\lambda} \vec w$.

\bigskip

\noindent{\bf Case 2:}
Assume that $\lambda_u = \lambda$. 
\smallskip

Similar to the previous case we first consider the 
extended PF-eigenvector $\vec{v}\in\cal{B}$ corresponding to the block $B$.
Recall that $\vec u \in \cal C$ is the vector given by the equation 
\[
M\vec v = \lambda \vec v + \vec u.
\]
 
We compute:
\begin{align*}
\frac{1}{\lambda_u^t \cdot t^{d_u+1}} M^t \vec v
&= \frac{1}{\lambda_u^t \cdot t^{d_u+1}} \left(\lambda^t \vec v + \sum_{j=0}^{t-1} \lambda^{j} M^{t-j-1} \vec u_{}\right)
\\&= \frac{1}{t^{d_u+1}} \vec v +  \frac{1}{\lambda}\sum_{j=0}^{t-1}  \frac{({t-j-1})^{d_u}}{t^{d_u+1}} \frac{1}{\lambda^{t-j-1} \cdot ({t-j-1})^{d_u}} M^{t-j-1} \vec u_{}\tag{$\dagger\dagger$}\label{secondsum}
\end{align*}
The
first term in this sum tends to $0$ when $t$ goes to infinity. In order to understand the limit of the second term in the above sum (\ref{secondsum}) we recall from the inductive hypothesis in Proposition \ref{induction-step} that the vectors
\[
\frac{1}{\lambda^{s} \cdot {s}^{d_u}} M^{s} \vec u_{}
\]
converge for $s \to \infty$ to some 
vector $\vec u_\infty$ in $\cal C$.

Since we need it later, we observe here that 
it follows from Lemma \ref{initial-faces} that some iterate $M^t \vec u$ belongs to the dominant interior of $\cal C\,$. Thus the inductive hypothesis in Proposition \ref{induction-step} states that $\vec u_\infty \neq \vec 0$ is an eigenvector of $M$.

In both cases, we derive that for any $\epsilon > 0$ there exists a bound $s(\epsilon)\ge0$ such that for all $s\geq s(\epsilon)$ we have
\[
\left\|\frac{1}{\lambda^{s} \cdot {s}^{d_u}} M^{s} \vec u_{} - \vec u_\infty\right\| \leq \epsilon\, ,
\]
from which we deduce that
\[
\left\|\frac{1}{\lambda^{t-j-1} \cdot ({t-j-1})^{d_u}} M^{t-j-1} \vec u_{} - \vec u_\infty\right\| \leq \epsilon
\]
holds for any $t-j-1 \geq s(\epsilon)$ or, equivalently, $j \leq t-s(\epsilon)-1$.

Thus we can split the second term in the above sum (\ref{secondsum})
as follows:
\begin{align*}
&\frac{1}{\lambda}\sum_{j=0}^{t-1}  \frac{({t-j-1})^{d_u}}{t^{d_u+1}} \frac{1}{\lambda^{t-j-1} \cdot ({t-j-1})^{d_u}} M^{t-j-1} \vec u_{}
\\&=
\frac{1}{\lambda}\sum_{j=0}^{t-s(\epsilon)-1}  \frac{({t-j-1})^{d_u}}{t^{d_u+1}} \frac{1}{\lambda^{t-j-1} \cdot ({t-j-1})^{d_u}} M^{t-j-1} \vec u_{}
\\&+
\frac{1}{\lambda}\sum_{j=t-s(\epsilon)}^{t-1}  \frac{({t-j-1})^{d_u}}{t^{d_u+1}} \frac{1}{\lambda^{t-j-1} \cdot ({t-j-1})^{d_u}} M^{t-j-1} \vec u_{}.
\end{align*}
For fixed $\epsilon>0$ and hence fixed $s(\epsilon)$ the second term in the last sum converges to 0 as $t$ tends to $\infty$, since 
\[
 \frac{({t-j-1})^{d_u}}{t^{d_u+1}}\le\frac{t^{d_u} }{t^{d_u+1}}\le\frac{1}{t}\, .
\] 

In order to compute the first term in (\ref{secondsum}) we observe that
\begin{align*}
&\left\|\sum_{j=0}^{t-s(\epsilon)-1}  \frac{({t-j-1})^{d_u}}{t^{d_u+1}} \frac{1}{\lambda^{t-j-1} \cdot ({t-j-1})^{d_u}} M^{t-j-1} \vec u_{}-
\sum_{j=0}^{t-s(\epsilon)-1}  \frac{({t-j-1})^{d_u}}{t^{d_u+1}}  \vec u_{\infty}\right\|
\\&\leq
\sum_{j=0}^{t-s(\epsilon)-1}\left \| (\frac{({t-j-1})^{d_u}}{t^{d_u+1}} \frac{1}{\lambda^{t-j-1} \cdot ({t-j-1})^{d_u}} M^{t-j-1} \vec u_{}
- \frac{({t-j-1})^{d_u}}{t^{d_u+1}}  \vec u_{\infty})\right\|
\\&=\sum_{j=0}^{t-s(\epsilon)-1}  \frac{({t-j-1})^{d_u}}{t^{d_u+1}} \left\|\frac{1}{\lambda^{t-j-1} \cdot ({t-j-1})^{d_u}} M^{t-j-1} \vec u_{}
 - \vec u_{\infty}\right\|
\\&\leq\sum_{j=0}^{t-s(\epsilon)-1}  \frac{({t-j-1})^{d_u}}{t^{d_u+1}} \epsilon\le\epsilon.
\end{align*}
This shows that 
\begin{align*}
\lim_{t\to\infty}\frac{1}{\lambda_u^t \cdot t^{d_u+1}} M^t \vec v&=
\frac{1}{\lambda}\lim_{t\to \infty} \sum_{j=0}^{t-s(\epsilon)-1}  \frac{({t-j-1})^{d_u}}{t^{d_u+1}}  \vec u_{\infty}
\\
&=\left(\frac{1}{\lambda}\lim_{t\to \infty} \sum_{j=0}^{t-s(\epsilon)-1}  \frac{({t-j-1})^{d_u}}{t^{d_u+1}}\right)\vec u_{\infty}\\
&=\left(\frac{1}{\lambda}\lim_{t\to \infty} \frac{1}{t^{d_u+1}}\sum_{k=s(\epsilon)}^{t-1}  {k}^{d_u}\right)\vec u_{\infty}.
\end{align*}
We note here that  
\[
\frac{1}{t^{d_u+1}}\sum_{k=0}^{t-1}  {k}^{d_u}\le1
\]
for all $t\ge1$. On the other hand,  \[\frac{1}{t^{d_u+1}}\sum_{k=0}^{t-1}  {k}^{d_u}\ge
\frac{1}{t^{d_u+1}}\sum_{k=t/2}^{t-1}  {k}^{d_u} \geq \frac{1}{t^{d_u+1}} \sum_{k=t/2}^{t-1}  (\frac{t}{2})^{d_u} = (\frac{1}{2})^{d_u+1}>0\]
for sufficiently large $t$,
so that, 
using the above observation that $\vec u_\infty \neq \vec 0$, we conclude that
the limit vector $\lambda_0 \vec u_\infty$ with
\[
\lambda_0 := \left(\frac{1}{\lambda}\lim_{t\to \infty} \frac{1}{t^{d_u+1}}\sum_{k=s(\epsilon)}^{t-1}  {k}^{d_u}\right)\tag{1}\label{lambda}
\]
is an 
eigenvector of $M$ in $\cal C$.
This proves the claim for the extended PF-eigenvector $\vec{v}$. \medskip

We now consider an arbitrary vector $\vec v_0 \in \cal B$, as well as the vectors  $v_t \in \cal B$ and $u_t \in \cal C$ as defined iteratively as before.
We obtain:
\begin{align*}
\frac{1}{\lambda_u^t \cdot t^{d_u+1}} M^t \vec v_0
&= \frac{1}{\lambda_u^t \cdot t^{d_u+1}} \left(\lambda^t \vec v_t + \sum_{j=0}^{t-1} \lambda^{j} M^{t-j-1} \vec u_{j+1}\right)
\\&= \frac{1}{t^{d_u+1}} \vec v_t +  \frac{1}{\lambda}\sum_{j=0}^{t-1}  \frac{({t-j-1})^{d_u}}{t^{d_u+1}} \frac{1}{\lambda^{t-j-1} \cdot ({t-j-1})^{d_u}} M^{t-j-1} \vec u_{j+1}\tag{$\ddagger$} \label{thirdsum}
\end{align*} 

The first term in this sum tends to $0$ when $t$ goes to infinity. In order to understand the limit of the second term 
we observe that the primitivity of the diagonal matrix block 
of $M$ corresponding to $B_{i}$ 
implies that the $\vec v_t$ converge to $\lambda' \vec v$ for some scalar $\lambda' > 0$. We write 
(as in Case 1)
$\vec u_{t+1} = \lambda' \vec u + \vec w_{t+1}$ and note that 
for any $\epsilon > 0$ there exists an integer $t_0 = t_0(\epsilon)$ such that
$\|w_{t+1}\| \leq \epsilon$ for any $t \geq t_0$.
As in Case 1 we have 
\[
\frac{1}{\lambda_u^t \cdot t^{d_u}} \|M^t \vec w_t\| \leq K(\epsilon)
\]
for all $t \geq t_0$ where $K(\epsilon)$ is the constant given by Lemma \ref{universal-constant}. 

As before, let $s(\epsilon)$ be an integer which ensures for all $s\geq s(\epsilon)$
that
\[
\|\frac{1}{\lambda^{s} \cdot {s}^{d_u}} M^{s} \vec u_{} - \vec u_\infty\| \leq \epsilon\, ,
\]
from which we deduce that
\[
\|\frac{1}{\lambda^{t-j-1} \cdot ({t-j-1})^{d_u}} M^{t-j-1} \vec u_{} - \vec u_\infty\| \leq \epsilon
\]
holds for any $t-j-1 \geq s(\epsilon)$ or, equivalently, $j \leq t-s(\epsilon)-1$.

We now 
split the second term in the above sum (\ref{thirdsum}) as follows:
\begin{align*}
&\frac{1}{\lambda}\sum_{j=0}^{t-1}  \frac{({t-j-1})^{d_u}}{t^{d_u+1}} \frac{1}{\lambda^{t-j-1} \cdot ({t-j-1})^{d_u}} M^{t-j-1} \vec u_{j+1}
\\&=
\frac{1}{\lambda}\sum_{j=0}^{t_0-1}  \frac{({t-j-1})^{d_u}}{t^{d_u+1}} \frac{1}{\lambda^{t-j-1} \cdot ({t-j-1})^{d_u}} M^{t-j-1} \vec u_{j+1}
\\&+
\frac{1}{\lambda}\sum_{j=t_0}^{t-s(\epsilon)-1}  \frac{({t-j-1})^{d_u}}{t^{d_u+1}} \frac{1}{\lambda^{t-j-1} \cdot ({t-j-1})^{d_u}} M^{t-j-1} \vec u_{j+1}
\\&+
\frac{1}{\lambda}\sum_{j=t-s(\epsilon)}^{t-1}  \frac{({t-j-1})^{d_u}}{t^{d_u+1}} \frac{1}{\lambda^{t-j-1} \cdot ({t-j-1})^{d_u}} M^{t-j-1} \vec u_{j+1}
\end{align*}
For a fixed $\epsilon>0$ and hence a fixed $t_0 = t_0(\epsilon)$ and $s(\epsilon)$, the first and the third term in the last sum converge to 0, as $t$ tends to $\infty$, since 
\[
 \frac{({t-j-1})^{d_u}}{t^{d_u+1}}\le\frac{t^{d_u} }{t^{d_u+1}}\le\frac{1}{t}\to0.
\] 
and the terms 
\[
\frac{1}{\lambda^{t-j-1} \cdot ({t-j-1})^{d_u}} M^{t-j-1} \vec u_{j+1}
\]
are uniformly bounded as we observed in Case 1. 

We now analyze the second term, where $\lambda'$ defined above through $\lim \vec v_t = \lambda' \vec v \,$:

\begin{align*}
&\left\|\sum_{j=t_0}^{t-s(\epsilon)-1}  \frac{({t-j-1})^{d_u}}{t^{d_u+1}} \frac{1}{\lambda^{t-j-1} \cdot ({t-j-1})^{d_u}} M^{t-j-1} \vec u_{j+1}
- \sum_{j=t_0}^{t-s(\epsilon)-1}  \frac{({t-j-1})^{d_u}}{t^{d_u+1}}  \lambda' \vec u_{\infty}\right\|
\\&\leq
\sum_{j=t_0}^{t-s(\epsilon)-1}\left \| (\frac{({t-j-1})^{d_u}}{t^{d_u+1}} \frac{1}{\lambda^{t-j-1} \cdot ({t-j-1})^{d_u}} M^{t-j-1} \vec u_{j+1}
- \frac{({t-j-1})^{d_u}}{t^{d_u+1}} \frac{1}{\lambda^{t-j-1} \cdot ({t-j-1})^{d_u}} M^{t-j-1} \lambda' \vec u_{})\right\|
\\&+
\sum_{j=t_0}^{t-s(\epsilon)-1}\left \| (\frac{({t-j-1})^{d_u}}{t^{d_u+1}} \frac{1}{\lambda^{t-j-1} \cdot ({t-j-1})^{d_u}} M^{t-j-1} \lambda' \vec u_{}
- \frac{({t-j-1})^{d_u}}{t^{d_u+1}}  \lambda' \vec u_{\infty})\right\|
\\&\leq
\sum_{j=t_0}^{t-s(\epsilon)-1} \frac{({t-j-1})^{d_u}}{t^{d_u+1}} \left \|\frac{1}{\lambda^{t-j-1} \cdot ({t-j-1})^{d_u}}  M^{t-j-1} \vec w_{j+1}
)\right\|
\\&+
\sum_{j=t_0}^{t-s(\epsilon)-1}\left \| \frac{({t-j-1})^{d_u}}{t^{d_u+1}} \frac{1}{\lambda^{t-j-1} \cdot ({t-j-1})^{d_u}} M^{t-j-1} \lambda' \vec u_{}
- \frac{({t-j-1})^{d_u}}{t^{d_u+1}}  \lambda' \vec u_{\infty})\right\|
\end{align*}
\begin{align*}
\\&\leq
\sum_{j=0}^{t-s(\epsilon)-1} (\frac{({t-j-1})^{d_u}}{t^{d_u+1}} 
K(\epsilon)
\\&+
\sum_{j=0}^{t-s(\epsilon)-1}  \frac{({t-j-1})^{d_u}}{t^{d_u+1}} \lambda' \left\|\frac{1}{\lambda^{t-j-1} \cdot ({t-j-1})^{d_u}} M^{t-j-1} \vec u_{}
 - \vec u_{\infty}\right\|
\\&\leq\sum_{j=0}^{t-s(\epsilon)-1}  \frac{({t-j-1})^{d_u}}{t^{d_u+1}} (1 + \lambda') K(\epsilon)
\\&
\leq (1 + \lambda') K(\epsilon),
\end{align*}
which tends to $0$ as $\epsilon\to 0$. 

Together with the previous estimates this shows that:
\[
\lim_{t \to \infty} \frac{1}{\lambda_u^t \cdot t^{d_u+1}} M^t \vec v_0
= \lambda' \lambda_0 \vec u_\infty, \]
where $\lambda_0 > 0$ is given by the Formula (\ref{lambda}) above. 
This finishes the proof in case 2, for $\lambda(\vec v_0) = \lambda' \lambda_0$.
We note that in this case 2 (as in case 1) all vectors in the block $\cal B$ have the same limit vector up to scaling. 

\smallskip
\noindent
{\bf Case 3:}
Assume that $\lambda_u > \lambda$.  

\smallskip
Note that this also includes the case where the diagonal block of $M$ corresponding to $B$ is a $(1 \times 1)$-matrix with entry $0$ or $1$.
 
For an arbitrary vector $\vec v_0$ of the cone $\cal B$ consider the following computation, 
where the vectors $\vec v_t \in \cal B$ and $\vec u_t \in \cal C$ are defined as before, and the value of the bound $t_0$ will be specified later:
\begin{align*}
\frac{1}{\lambda_u^t \cdot t^{d_u}} M^t \vec v_0
 &= \frac{1}{\lambda_u^t \cdot t^{d_u}}\left(\lambda^t \vec v_t + \sum_{j=0}^{t-1} \lambda^{j} M^{t-j-1} \vec u_{j+1}\right)\\
 &= \left(\frac{\lambda}{\lambda_u}\right)^t \frac{1}{t^{d_u}} \vec v_t +  \frac{1}{\lambda_u}\sum_{j=0}^{t-1} \left(\frac{\lambda}{\lambda_u}\right)^j \left(\frac{{t-j-1}}{t}\right)^{d_u} \frac{1}{\lambda_u^{t-j-1} \cdot ({t-j-1})^{d_u}} M^{t-j-1} \vec {u}_{j+1}\\
&= \tag{$\ddagger\ddagger$}\label{fourthsum}
\left(\frac{\lambda}{\lambda_u}\right)^t \frac{1}{t^{d_u}} \vec v_t +   
\frac{1}{\lambda_u}\sum_{j=0}^{t_0-1}  \left(\frac{\lambda}{\lambda_u}\right)^j \left(\frac{{t-j-1}}{t}\right)^{d_u} \frac{1}{\lambda_u^{t-j-1} \cdot ({t-j-1})^{d_u}} M^{t-j-1} \vec u_{j+1}\\
&+
\frac{1}{\lambda_u}\sum_{j=t_0}^{t-1}  \left(\frac{\lambda}{\lambda_u}\right)^j \left(\frac{{t-j-1}}{t}\right)^{d_u} \frac{1}{\lambda_u^{t-j-1} \cdot ({t-j-1})^{d_u}} M^{t-j-1} \vec u_{j+1} \, ,
 \end{align*}

The first term in the last sum tends to $0$ when $t$ goes to infinity. 
In order to understand the limit of the third term 
we argue (as in case 1) that from the definition of the $\vec v_t$ and $\vec u_t$ it follows directly that 
the values 
$\| \vec v_t\|$ and
$\| \vec u_t\|$ are uniformly bounded over all $t \geq 0$ by some constant $K_0 \geq 0$. 
Hence it follows from Lemma \ref{universal-constant} that there exist $K=K(K_0)$ such that 
\[
\|\frac{1}{\lambda_u^{t-j-1} \cdot ({t-j-1})^{d_u}} M^{t-j-1} \vec u_{j+1}\| < K
\]
for all $t \geq j \geq t_0$.
  
This immediately gives:
\begin{align*}
&\left\|\sum_{j=t_0}^{t-1} \left(\frac{\lambda}{\lambda_u}\right)^j \left(\frac{{t-j-1}}{t}\right)^{d_u} \frac{1}{\lambda_u^{t-j-1} \cdot ({t-j-1})^{d_u}} M^{t-j-1} \vec u_{j+1} 
\right\|\\
&\leq \sum_{j=t_0}^{t-1} \left(\frac{\lambda}{\lambda_u}\right)^j \left(\frac{{t-j-1}}{t}\right)^{d_u} \left \|\frac{1}{\lambda_u^{t-j-1} \cdot ({t-j-1})^{d_u}} M^{t-j-1} \vec u_{j+1}
\right\|\\
&\leq \sum_{j=t_0}^{t-1} \left(\frac{\lambda}{\lambda_u}\right)^j \left(\frac{{t-j-1}}{t}\right)^{d_u} K\\
&\leq K \sum_{j=t_0}^{t-1} \left(\frac{\lambda}{\lambda_u}\right)^j \left(\frac{{t-j-1}}{t}\right)^{d_u}  
\leq K \sum_{j=t_0}^{t-1} \left(\frac{\lambda}{\lambda_u}\right)^j   \\
&
 = K\left(\frac{\lambda}{\lambda_u}\right)^{t_0} \, \sum_{j = 0}^{t - t_0-1} \left(\frac{\lambda}{\lambda_u}\right)^j   
\leq K\,\frac{\lambda_u}{\lambda_u - \lambda}\left(\frac{\lambda}{\lambda_u}\right)^{t_0}
\end{align*}
This shows that the third term of the above sum converges for increasing $t_0$ to $0$.

\smallskip

In order to understand the limit of the second term of the above sum (\ref{fourthsum}) we recall from the inductive hypotheses that for any of the $\vec u_j$
\[
\lim_{s\to\infty}\frac{1}{\lambda_u^{s} \cdot {s}^{d_u}} M^{s} \vec{u}_j =\vec u_\infty^j
\] 
where either $\vec u_{\infty}^j = \vec 0$ or 
$\vec u_{\infty}^j$ of is an eigenvector of $M$ with eigenvalue $\lambda_u$. 
From Lemma \ref{initial-faces} and our induction hypothesis on vectors in the dominant interior of $\cal C$
we know that except for
a bounded number of small values of $j$ one has $\vec u_{\infty}^j \neq \vec 0$, 
and $\vec u_{\infty}^j$ is an eigenvector of $M$ with eigenvalue $\lambda_u$ (see Remark \ref{eigenvalue}).

Moreover, as we observed above,the values 
$\| \vec u_j\|$ are uniformly bounded over all $j \geq 0$. Hence for any $\epsilon > 0$ there exists a bound $s(\epsilon, j)$ such that for all $s\geq s(\epsilon, j)$ we have:
\[
\left\|\frac{1}{\lambda_u^{s} \cdot {s}^{d_u}} M^{s} \vec u_{j} -\vec u_\infty^j\right\| \leq \epsilon
\]
For any choice of $t_0 \geq 0$ we define 
$$
s_m(\epsilon, t_0) = \max\{ s(\epsilon, j)  \mid 1 \leq j \leq t_0 \}
$$
and thus obtain
\[
\left\|\frac{1}{\lambda_u^{t-j-1} \cdot ({t-j-1})^{d_u}} M^{t-j-1} \vec u_{j+1} - \vec u_\infty^{j+1}\right\| \leq \epsilon
\]
for any $0 \leq j \leq t_0 - 1$ and $t-j-1 \geq s_m$. This gives, for any $t \geq t_0 + s_m +1$:

\begin{align*}
&\left\|\sum_{j=0}^{t_0-1} \left(\frac{\lambda}{\lambda_u}\right)^j \left(\frac{{t-j-1}}{t}\right)^{d_u} \frac{1}{\lambda_u^{t-j-1} \cdot ({t-j-1})^{d_u}} M^{t-j-1} \vec u_{j+1} 
- \sum_{j=0}^{t_0-1}  \left(\frac{\lambda}{\lambda_u}\right)^j \left(\frac{{t-j-1}}{t}\right)^{d_u} \vec u_{\infty}^{j+1}\right\|\\
&\leq \sum_{j=0}^{t_0-1}\left \| \left(\frac{\lambda}{\lambda_u}\right)^j \left(\frac{{t-j-1}}{t}\right)^{d_u} \frac{1}{\lambda_u^{t-j-1} \cdot ({t-j-1})^{d_u}} M^{t-j-1} \vec u_{j+1}-
\left(\frac{\lambda}{\lambda_u}\right)^j \left(\frac{{t-j-1}}{t}\right)^{d_u} \vec u_{\infty}^{j+1})\right\|\\
&\leq \sum_{j=0}^{t_0-1}  \left(\frac{\lambda}{\lambda_u}\right)^j \left(\frac{{t-j-1}}{t}\right)^{d_u} \left\|\frac{1}{\lambda_u^{t-j-1} \cdot ({t-j-1})^{d_u}} M^{t-j-1} \vec u_{j+1}- 
  \vec u_{\infty}^{j+1}\right\|\\
&\leq\sum_{j=0}^{t_0-1} \left(\frac{\lambda}{\lambda_u}\right)^j \left(\frac{{t-j-1}}{t}\right)^{d_u} \epsilon\,\,\,\le\,\,\,\sum_{j=0}^{t_0-1} \left(\frac{\lambda}{\lambda_u}\right)^j\epsilon \\
&\leq \frac{\lambda_u}{\lambda_u - \lambda}
\epsilon.
\end{align*}
This shows that the term
\[
\frac{1}{\lambda_u} \sum_{j=0}^{t_0-1} \left(\frac{\lambda}{\lambda_u}\right)^j \left(\frac{{t-j-1}}{t}\right)^{d_u} \frac{1}{\lambda_u^{t-j-1} \cdot ({t-j-1})^{d_u}} M^{t-j-1} \vec u_{j+1}
\]
is $\frac{\lambda_u}{\lambda_u - \lambda}\epsilon$-close to the 
sum
\[
\frac{1}{\lambda_u} \sum_{j=0}^{t_0-1}  \left(\frac{\lambda}{\lambda_u}\right)^j \left(\frac{{t-j-1}}{t}\right)^{d_u}  \vec u_{\infty}^{j+1} \, ,
\]
which is non-zero for all sufficiently large $t_0$. 
Since it is the sum of eigenvectors with same eigenvalue $\lambda_u$, it is itself an eigenvector with eigenvalue $\lambda_u$.

\medskip

We now put together the arguments for the first, the second term and the third term of the above sum and obtain:
For $t \to \infty$ the family of vectors $\frac{1}{\lambda_u^t \cdot t^{d_u}} M^t \vec v_0$ converges
to the eigenvector
$$
\frac{1}{\lambda_u} \sum_{j=0}^{\infty}  \left(\frac{\lambda}{\lambda_u}\right)^j 
\vec u_{\infty}^{j+1}.
$$
The reader should notice that, contrary to the other two cases, in this case 3 this limiting eigenvector does depend on the choice of the ``starting vector'' $\vec v_0$.

\section{Eigenvectors and PB-Frobenius convergence}
\label{PB-convergence}

\subsection{Eigenvectors for matrices in primitive Frobenius form}
\label{primitive-F-eigenvectors}

Let $M$ be a non-negative integer square matrix in primitive Frobenius form
with no zero-columns. 
We say that a block $B_i$ in the associated block decomposition is {\em principal} if for every block $B_j$ in the dependency block union $C(B_i)$ the corresponding PF-eigenvalues satisfy:
$$\lambda_i > \lambda_j$$
This is equivalent to stating that any maximal chain of blocks $B_j$ that realize the growth type function of $\cal B_i + \cal C(B_i)$ (see the paragraph before Lemma \ref{initial-faces}) consists only of the single block $B_i$, i.e. the growth type function $h_i$ of $B_i$ is given by $h_i(t) = \lambda_i^t$.

\begin{lem}
\label{principal-ev}
Every principal block $B_i$ of $M$ determines an eigenvector $\vec v(B_i) \in \cal B_i + \cal C(B_i)$ with eigenvalue $\lambda_i$ which satisfies:
\begin{enumerate}
\item
The vector $\vec v(B_i)$ admits a decomposition 
$$\vec v(B_i) = \vec v^{\, \rm PF}_i + \vec w_i \, ,$$
where $\vec v^{\, \rm PF}_i$ is the 
extended PF-eigenvector (see Definition \ref{primitive-F} (3)) of the primitive diagonal block of $M$ corresponding to $B_i$, and $w_i \in \cal C(B_i)$.
\item
The vector $\vec v(B_i)$ is the only eigenvector in $\cal B_i + \cal C(B_i)$ which admits such a decomposition: Any other eigenvector in $\cal B_i + \cal C(B_i)$ is either contained in $\cal C(B_i)$, or else it is a scalar multiple of $\vec v(B_i)$. Hence, $\vec v(B_i)$ will be called the ``principal eigenvector'' of $B_i$ (or of $\cal B_i + \cal C(B_i)$).

\end{enumerate}
\end{lem}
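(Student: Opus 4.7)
My plan is to deduce both parts of the lemma from Case 1 of Proposition \ref{induction-step} together with a spectral gap argument. The principal hypothesis on $B_i$ says exactly $\lambda_i > \lambda_j$ for every $B_j \in C(B_i)$; writing $\lambda_u = \max\{\lambda_j \mid B_j \in C(B_i)\}$, this is the strict inequality $\lambda_i > \lambda_u$ that distinguishes Case 1 from Cases 2 and 3 of Proposition \ref{induction-step}.

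For existence, I would first dispose of the degenerate situation $C(B_i) = \emptyset$: then $B_i$ is minimal in the partial order, and the no-zero-columns assumption forces $M_{ii}$ to be a non-zero primitive diagonal block, so $\vec v(B_i) := \vec v^{\, \rm PF}_i$ (with $\vec w_i = \vec 0$) works by classical Perron-Frobenius. If $C(B_i) \neq \emptyset$, I would first upgrade the inductive hypothesis to the convergence condition CC($\cal C(B_i)$) by induction on the number of blocks contained in $C(B_i)$, using Lemma \ref{weak-interior}(2). Then Case 1 of Proposition \ref{induction-step} applies, and in fact its proof explicitly exhibits the eigenvector $\vec v^{\, \rm PF}_i + \frac{1}{\lambda_i}\sum_{m \geq 0} \lambda_i^{-m} M^m \vec u$, where $\vec u \in \cal C(B_i)$ is determined by $M \vec v^{\, \rm PF}_i = \lambda_i \vec v^{\, \rm PF}_i + \vec u$. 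This vector is the desired $\vec v(B_i)$, with tail $\vec w_i := \frac{1}{\lambda_i}\sum_{m \geq 0} \lambda_i^{-m} M^m \vec u \in \cal C(B_i)$.

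For part (2), let $\vec v'$ be any eigenvector in $\cal B_i + \cal C(B_i)$ with some eigenvalue $\mu \geq 0$, and split it as $\vec v' = \vec v_i^* + \vec u'$ with $\vec v_i^* \in \cal B_i$ and $\vec u' \in \cal C(B_i)$. Since $M$ is lower block triangular with respect to this splitting, projecting $M \vec v' = \mu \vec v'$ onto the $B_i$-coordinates gives $M_{ii} \vec v_i^* = \mu \vec v_i^*$. If $\vec v_i^* = \vec 0$ then $\vec v' \in \cal C(B_i)$, the first alternative. Otherwise $\vec v_i^*$ is a non-zero non-negative eigenvector of the primitive matrix $M_{ii}$, so by classical Perron-Frobenius $\mu = \lambda_i$ and $\vec v_i^* = c \vec v^{\, \rm PF}_i$ for some $c > 0$.

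The main obstacle is then to conclude $\vec v' = c \vec v(B_i)$, that is, to show that the tail in $\cal C(B_i)$ is completely determined by the $\cal B_i$-component. For this I would set $\vec v'' := \vec v' - c \vec v(B_i)$, which lies in the linear span of $\cal C(B_i)$ and is an eigenvector of $M$ with eigenvalue $\lambda_i$. The restriction of $M$ to this linear span is itself lower block triangular, with primitive diagonal blocks $M_{jj}$ for $B_j \in C(B_i)$, each of spectral radius $\lambda_j \leq \lambda_u$. Since $B_i$ is principal, $\lambda_u < \lambda_i$, so $\lambda_i$ is not an eigenvalue of this restriction, forcing $\vec v'' = \vec 0$. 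This step is where the principal hypothesis is essential: without the strict spectral gap $\lambda_i > \lambda_u$ one cannot rule out additional eigenvectors, as illustrated by Cases 2 and 3 of Proposition \ref{induction-step}.
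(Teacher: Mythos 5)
Your proposal is correct, and it splits naturally into two halves. The existence half is essentially the paper's argument: establish CC($\cal C(B_i)$) inductively via Lemma \ref{weak-interior}, note that the principal hypothesis is exactly the spectral gap $\lambda_i>\lambda_u$ of Case (1) of Proposition \ref{induction-step}, and read off the eigenvector $\vec v^{\,\rm PF}_i+\frac{1}{\lambda_i}\sum_{m\ge0}\lambda_i^{-m}M^m\vec u$ exhibited in that case's proof. The uniqueness half, however, is genuinely different. The paper argues dynamically: for any eigenvector $\vec v=\vec v_0+\vec u$ with $\vec v_0\neq\vec 0$, the normalized iterates $\lambda_i^{-t}M^t\vec v$ are forced by Case (1) and Lemma \ref{growth-type-sums} to converge to a positive multiple of $\vec v^{\,\rm PF}_i+\vec w_i$, while for an eigenvector these iterates are just $(\mu/\lambda_i)^t\vec v$; comparing pins down both $\mu$ and $\vec v$. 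You instead argue spectrally: block-triangularity identifies the $B_i$-component as a non-negative eigenvector of the primitive block $M_{ii}$ (classical Perron--Frobenius gives $\mu=\lambda_i$ and the component up to scale), and then the difference $\vec v'-c\,\vec v(B_i)$ lives in the $M$-invariant span of $\cal C(B_i)$, where the restriction of $M$ has all eigenvalues of modulus at most $\lambda_u<\lambda_i$, so the difference vanishes. Your route is more elementary (no convergence needed for uniqueness) and in fact yields a slightly stronger statement — uniqueness among all, not just non-negative, $\lambda_i$-eigenvectors with prescribed $B_i$-component — whereas the paper's route comes for free once Proposition \ref{induction-step} is in hand and simultaneously delivers the convergence statement that is reused in the proof of Proposition \ref{eigenvectors}. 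The only point you leave implicit is that the linear span of $\cal C(B_i)$ is $M$-invariant; this holds because $C(B_i)$ is closed under the dependency relation $\succ$, which is transitive for matrices in primitive Frobenius form as noted in \S\ref{block-matrices}, and is worth a sentence.
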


\begin{proof}
Any non-zero vector $\vec v \in \cal B_i + \cal C(B_i)$ can be written as $\vec v = \vec v_0 + \vec u$, with $\vec v_0 \in \cal B$ and $\vec u \in \cal C(B_i)$. From the hypothesis that $B_i$ is principal it follows that the growth type of $\cal C(B_i)$ and thus that of $\vec u$ is strictly smaller than that of $B_i$, which is given by the function $h(t) = \lambda_i^t$. Case (1) of Proposition \ref{induction-step} thus shows that, if $\vec v_0 \neq \vec 0$, then $\frac{1}{h(t)} M^t(v_0)$ converges to a scalar multiple of 
the eigenvector 
$\vec v^{\, \rm PF}_i + \vec w_i$, where $w_i \in \cal C(B_i)$ is uniquely determined by the extended eigenvector $\vec v^{\, \rm PF}_i$. It follows directly that either 
$\vec v_0 = \vec 0$ and thus
$\vec v \in \cal C(B_i)$, or else
\[
\lim_{t\to\infty} \frac{1}{h(t)} M^t(\vec v) = \lambda(\vec v^{\, \rm PF}_i + \vec w_i)
\]
for some $\lambda > 0$.
In particular, we observe that any eigenvector in $\cal B_i + \cal C(B_i)$ which is not contained in $\cal C(B_i)$ must (up to rescaling) agree with $\vec v^{\, \rm PF}_i + \vec w_i$.
The latter is indeed an eigenvector with eigenvalue $\lambda_i$, by
Remark \ref{eigenvalue} and Lemma \ref{weak-interior} (1).

\end{proof}

We will denote by 
$\cal C(\lambda) \subset \cal C^n$ 
the non-negative cone 
spanned by all principal eigenvectors of $M$ with eigenvalue $\lambda$. As before, we write here $\cal C^n$ to denote the standard non-negative cone in $\R^n$. 
We also recall that for matrices in primitive Frobenius form there is a natural partial order on the blocks (see subsection \ref{block-matrices}), to which we refer below when a block is called ``minimal'' or ``maximal''.

\begin{prop}
\label{eigenvectors}
A 
vector $\vec v \in \cal C^n$ 
is an eigenvector of $M$ with eigenvalue $\lambda \geq 1$ if and only if $\vec v$ is contained in 
$\cal C(\lambda) \smallsetminus \{\vec 0\}$.
\end{prop}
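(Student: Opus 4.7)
The ``if'' direction is immediate: any non-zero $\vec v \in \cal C(\lambda)$ is by definition a non-negative combination of principal eigenvectors of $M$, all with eigenvalue $\lambda$, and hence is itself an eigenvector of $M$ with eigenvalue $\lambda$.

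For the ``only if'' direction the plan is to use induction on the number of blocks in the primitive Frobenius decomposition of $M$. Let $B_1$ denote the topmost block (of smallest index), which is $\succ$-maximal since $M$ is lower block triangular, and write $\vec v = \vec v_1 + \vec v'$, with $\vec v_1$ the projection of $\vec v$ onto $\cal B_1$. If $\vec v_1 = \vec 0$ one views $\vec v$ as an eigenvector of the principal submatrix $M'$ obtained by deleting $B_1$; because the $B_1$-rows of $M$ vanish on columns outside $B_1$, $M'$ inherits the no-zero-column hypothesis, and the inductive hypothesis places $\vec v$ in $\cal C'(\lambda) \subseteq \cal C(\lambda)$ --- the inclusion holding since the principal blocks of $M'$ coincide with those of $M$ distinct from $B_1$, with the same principal eigenvectors (the dependency union $C(B_j)$ for $j \geq 2$ already excludes $B_1$).

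The main case is $\vec v_1 \neq \vec 0$. Projecting $M \vec v = \lambda \vec v$ onto the $B_1$-coordinates gives $M_{11} \vec v_1 = \lambda \vec v_1$, and since the assumption $\lambda \geq 1$ rules out $M_{11} = (0)$, the matrix $M_{11}$ is primitive and the classical Perron--Frobenius theorem forces $\vec v_1 = c\,\vec v^{\,\rm PF}_1$ for some $c > 0$, with $\lambda = \lambda_1$. The heart of the argument is then to show that $B_1$ is principal, which I expect to be the main obstacle. I will argue by contradiction: if $\lambda_u := \max\{\lambda_j \mid B_j \in C(B_1)\} \geq \lambda$, then Proposition~\ref{induction-step} in case (2) or (3) provides a normalization $h(t)$ of growth type strictly exceeding $\lambda^t$ (namely $\lambda^t t^{d_u+1}$ or $\lambda_u^t t^{d_u}$) such that $\frac{1}{h(t)} M^t \vec v_1$ converges to a non-zero eigenvector in $\cal C(B_1)$. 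Writing $M^t \vec v_1 = \lambda^t \vec v_1 + \vec u_t$ with $\vec u_t \in \cal C(B_1)$, the eigenvector identity $M^t \vec v = \lambda^t \vec v$ gives $M^t \vec v' = \lambda^t \vec v' - \vec u_t$; since $\vec v' \in \cal C^n$ has trivial $B_1$-projection, $M^t \vec v'$ stays non-negative, which forces $\vec u_t \leq \lambda^t \vec v'$ coordinatewise for every $t$. Dividing by $\lambda^t$ and letting $t \to \infty$ then contradicts boundedness of $\vec v'$ at any coordinate where $\lim_{t\to\infty}\frac{1}{h(t)} \vec u_t$ is positive.

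Once principality of $B_1$ is established, Proposition~\ref{induction-step} case~(1) applied to $\vec v_1 = c\,\vec v^{\,\rm PF}_1$ gives $\lim_{t\to\infty}\frac{1}{\lambda^t} M^t \vec v_1 = c\,\vec v(B_1) = c(\vec v^{\,\rm PF}_1 + \vec w)$, and hence $\lim_{t\to\infty}\frac{1}{\lambda^t}\vec u_t = c\vec w$. The same coordinatewise bound from the previous paragraph now yields $c\vec w \leq \vec v'$, so $\vec v^* := \vec v - c\,\vec v(B_1) = \vec v' - c\vec w$ lies in $\cal C^n$, has trivial $B_1$-projection, and remains an eigenvector of $M$ with eigenvalue $\lambda$. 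Thus $\vec v^*$ is an eigenvector of $M'$, the induction hypothesis places $\vec v^*$ in $\cal C'(\lambda) \subseteq \cal C(\lambda)$, and consequently $\vec v = c\,\vec v(B_1) + \vec v^* \in \cal C(\lambda)$, completing the inductive step.
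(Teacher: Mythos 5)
Your proof is correct and follows essentially the same route as the paper: induction on the number of blocks, peeling off the maximal block $B_1$, using the classical Perron--Frobenius theorem on the primitive diagonal block to pin down $\vec v_1$ and $\lambda$, and invoking Proposition \ref{induction-step} to rule out the non-principal cases and to identify the principal eigenvector. Your explicit coordinatewise bound $\vec u_t \leq \lambda^t \vec v'$ is a nice touch that justifies the non-negativity of the remainder $\vec v^* = \vec v - c\,\vec v(B_1)$, a point the paper's appeal to Lemma \ref{principal-ev} leaves more implicit.
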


\begin{proof}
Clearly any $\vec v \in \cal C(\lambda) \smallsetminus \{\vec 0\}$ is an eigenvector with eigenvalue $\lambda$. For the converse implication
we consider a maximal block $B$ of $M$, and assume by induction over the number of blocks in $M$ that the claim is true for the restriction of $M$ to the invariant block $\cal C$ spanned by all coordinate vectors not contained in $B$. If $B$ is not principal, it follows directly from the cases (2) and (3) of Proposition \ref{induction-step} that any eigenvector of $M$ must have zero entries in the coordinates that belong to $B$, so that the claim follows from the induction hypothesis.

Similarly, if $B$ is principal but the eigenvalue $\lambda$ of $\vec v$ is different from the PF-eigenvalue $\lambda_0$ of $B$, it follows from case (1) of Proposition \ref{induction-step} that $\vec v$ belongs to $\cal C$, so that the claim follows again from the induction hypothesis.

Finally, 
if $B$ is principal with $PF$-eigenvalue equal to $\lambda$, and with principal eigenvector $\vec v^{\, \rm PF} + \vec w$, then by the $M$-invariance of $\cal C$ we can apply Lemma \ref{principal-ev} to obtain a decomposition 
$$\vec v = \lambda' (\vec v^{\, \rm PF} + \vec w) + \vec u$$
for some vector $\vec u \in \cal C$ and some scalar $\lambda' \geq 0$. Since both, $\vec v$ and $\vec v^{\, \rm PF} + \vec w$ are eigenvectors with eigenvalue $\lambda$, the same is true for $\vec u$. Hence the claim follows again from our induction hypothesis.
${}^{}$
\end{proof}

\begin{rem}
\label{Schneider}
(1)
Eigenvectors of non-negative matrices have been investigated previously by several authors, see for instance \cite{ESS} and \cite{Ro} and the references given there.  Indeed, the 
statements 
of Lemma \ref{principal-ev} and Proposition \ref{eigenvectors} 
are very close to results obtained there.

In particular, in a slightly more general context, H. Schneider \cite{Sc} and his coauthors use, 
in the graph that canonically realizes the partial order on the set of irreducible matrix blocks of any non-negative matrix, 
the term ``distinguished'' for vertices
which correspond in the above considered case to what we call ``principle'' matrix blocks. They call the corresponding eigenvalues ``distinguished'', and any non-negative eigenvector is ``distinguished'' if it has a distinguished eigenvalue. Our ``principal'' eigenvectors would be, in their terminology, ``extremal distinguished'' eigenvectors, which are furthermore normalized.

\smallskip
\noindent
(2)
The reader should be aware of the fact that authors in dynamical systems use the attribute ``distinguished'' for eigenvectors in a slightly different meaning than what is common in applied linear algebra: In \cite{BKMS} as well as in \cite{HY}
``distinguished eigenvectors'' refers to what would be ``extremal distinguished eigenvectors'' in  Schneider's sense  above.

\smallskip
\noindent
(3)
As a final comment, we'd like to point out here that, similar to the above proofs of Lemma \ref{principal-ev} and Proposition \ref{eigenvectors}, an additional number of classical results (for instance Theorem 3.1 of \cite{Ro} or Theorem 3.7 of \cite{Sc}) about eigenvectors of non-negative matrices seem to follow as direct corollaries from our Proposition \ref{induction-step}.

\end{rem}

\medskip

\subsection{Eigenvectors for PB-Frobenius matrices}
\label{PB-eigenvectors}

We now turn our attention once again to non-negative matrices in PB-Frobenius form (see Definition \ref{Frobenius-form*}), as has been used throughout the first 3 sections of this paper. 

\begin{prop}
\label{PB-eigenvectors}
Let $M_0$ be a non-negative integer square matrix which is in PB-Frobenius form, and assume that 
$M_0$ is expanding (see Definition-Remark \ref{expanding-matrix}). 
Let $M_1$ be a positive power of $M_0$ which is
in primitive Frobenius form (with respect to a possibly refined block decomposition). Then every eigenvector of $M_1$ is also an eigenvector of $M_0$.
\end{prop}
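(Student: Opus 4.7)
The plan is to apply Theorem \ref{thmI} directly to $M_0$ and to pin down its limit vector using the arithmetic subsequence $t = ks$ coming from the relation $M_1 = M_0^s$. Let $\vec v \neq \vec 0$ be a non-negative eigenvector of $M_1$ with eigenvalue $\lambda$; by Proposition \ref{eigenvectors} the cone of non-negative eigenvectors of $M_1$ is spanned by principal eigenvectors, so treating an arbitrary non-negative eigenvector suffices. The first step is to verify that $\lambda > 1$: since $M_0$ is expanding, Definition-Remark \ref{expanding-matrix}(2) gives $\|M_0^t \vec v\| \to \infty$, while along the subsequence $t = ks$ the identity $M_0^{ks}\vec v = \lambda^k \vec v$ yields $\|M_0^{ks}\vec v\| = \lambda^k \|\vec v\|$. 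This growth can only be unbounded if $\lambda > 1$, and in particular $\lambda > 0$.

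Now $M_0$ itself satisfies the hypotheses of Theorem \ref{thmI}, so that theorem supplies a limit vector
\[
\vec v_\infty = \lim_{t \to \infty} \frac{1}{\|M_0^t \vec v\|} M_0^t \vec v \neq \vec 0,
\]
which is an eigenvector of $M_0$. On the other hand, evaluating the same normalized sequence along the subsequence $t = ks$ gives
\[
\frac{M_0^{ks} \vec v}{\|M_0^{ks} \vec v\|} = \frac{\lambda^k \vec v}{\lambda^k \|\vec v\|} = \frac{\vec v}{\|\vec v\|},
\]
which is constant in $k$. The two computations agree only if $\vec v_\infty = \vec v / \|\vec v\|$, in which case $\vec v$ is a positive scalar multiple of $\vec v_\infty$, and hence is itself an eigenvector of $M_0$.

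The main conceptual obstacle is the standard worry that an eigenvector of $M_0^s$ need not be an eigenvector of $M_0$: a priori it could be a genuine combination of $M_0$-eigenvectors whose eigenvalues are distinct $s$-th roots of the same number $\lambda$. The non-negativity hypothesis bypasses this issue entirely, since Theorem \ref{thmI} produces a \emph{unique} limit direction for the iterates of any non-negative vector under $M_0$, and the subsequence computation above identifies that direction with $\vec v$ itself. In particular the argument makes no explicit use of the refined block decomposition of $M_1$ beyond what is already encoded in the statement of Theorem \ref{thmI}, and the expanding hypothesis on $M_0$ enters only to rule out the degenerate possibility $\lambda \le 1$.
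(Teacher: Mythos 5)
Your argument is circular within the logical architecture of the paper. You invoke Theorem \ref{thmI} (convergence of the full normalized sequence $\frac{1}{\|M_0^t\vec v\|}M_0^t\vec v$ for the PB-Frobenius matrix $M_0$), but the paper's proof of Theorem \ref{thmI} is given only at the very end of \S\ref{PB-convergence} and relies essentially on Proposition \ref{PB-eigenvectors}: Theorem \ref{GPFT} yields convergence only along the arithmetic subsequence $t = ks$ (for $M_1 = M_0^k$ in primitive Frobenius form), and one then needs to know that the finite $M_0$-orbit of the subsequential limit is a single point --- which is exactly the content of Proposition \ref{PB-eigenvectors} --- to upgrade this to convergence of the full sequence. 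So the ``unique limit direction'' you appeal to in your last paragraph is precisely the fact that this proposition is designed to supply; you cannot use it as an input. Note also that you cannot substitute Theorem \ref{GPFT} for Theorem \ref{thmI} here, since $M_0$ is only in PB-Frobenius form (its PB blocks need not be primitive), and applying Theorem \ref{GPFT} to $M_1$ only controls the subsequence $t = ks$, along which your computation gives no new information.

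The non-circular route, which is the one the paper takes, avoids any convergence statement for $M_0$ itself. One first observes that the principal eigenvectors of $M_1$ with a fixed eigenvalue $\lambda$ are linearly independent (they have non-zero coordinates in distinct principal blocks), hence are extremal rays of the cone $\cal C(\lambda)$; since $M_0$ commutes with $M_1$ and $M_1$ fixes $\cal C(\lambda)$ pointwise up to scale, $M_0$ must permute these extremal rays. The expanding hypothesis then enters not to show $\lambda > 1$ for its own sake, but to show that each principal block of $M_1$ has PF-eigenvalue $> 1$ and therefore (by Remark \ref{iterating-to-get-primitive-F}(b)) cannot sit inside a PB-block of $M_0$ --- so it is already a primitive diagonal block of $M_0$ itself, hence fixed rather than permuted. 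That forces the permutation to be trivial, and the conclusion extends from the principal eigenvectors to all of $\cal C(\lambda)$ by Proposition \ref{eigenvectors}. Your verification that $\lambda > 1$ along the subsequence is correct but does not substitute for this block-structure argument.
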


\begin{proof}
We first note that the assumption that $M_0$ is expanding 
implies that $M_1$ has no zero-columns.

Since any two distinct principal eigenvectors of $M_1$ have non-zero coordinates in distinct principal blocks, it follows that they are linearly independent. Thus each principal eigenvector is an extremal point of the non-negative cone $\cal C(\lambda)$ spanned by all principal eigenvectors with same eigenvalue $\lambda$. Since the positive power $M_1$ of $M_0$ fixes every vector of $\cal C(\lambda)$ up to rescaling, it follows that $M_0$ must permute the principal eigenvectors of $M_1$ (up to rescaling).

We now observe that 
from the assumption that 
$M_0$ is expanding 
it follows 
furthermore 
that any minimal block of $M_1$ is primitive with PF-eigenvalue $> 1$. Thus from the definition of ``principal'' it follows that  all principal eigenvectors of $M_1$ have eigenvalue $> 1$.  Correspondingly, their associated principal block has as corresponding square diagonal matrix block a primitive matrix with PF-eigenvalue $> 1$. 
Recalling 
(see Remark \ref{iterating-to-get-primitive-F} (b)) 
that the block decomposition for $M_1$ is a refinement of the block decomposition for $M_0$, we deduce 
%
that these 
principal 
blocks can not be 
contained in a PB-block 
for $M_0$, so that by definition of the PB-Frobenius form they must be primitive blocks even for $M_0$.  In particular, each of them is fixed by $M_0$, which implies that the above permutation of $M_0$ of the principal eigenvectors of $M_1$ is trivial. Hence every primitive eigenvector of $M_1$ is also eigenvector of $M_0$, which implies the same for all of $\cal C(\lambda)$, thus proving our claim.
\end{proof}

We are now ready to prove the matrix convergence result stated in the Introduction: 

\begin{proof}[Proof of Theorem \ref{thmI}]
Let $M$ be the given matrix in PB-Frobenius form, 
which is assumed to be expanding. 
By Lemma \ref{Frobenius-powers*} there exists a positive power $M_1$ of $M$ which is in primitive Frobenius form. Let $\vec v \in \cal C$ be any non-zero vector, and apply Theorem \ref{GPFT} to get a limit eigenvector 
$$\vec v_\infty = \lim_{t\to\infty}\frac{M_1^{t}\vec{v}}{h_{\vec v}(t)}\, ,$$
for some normalization function $h_{\vec v}$ for the vector $\vec v$. The same statement (up to replacing $\vec v_\infty$ by a scalar multiple) stays valid if we replace $h_{\vec v}$ by any other normalization function for $\vec v$.
Thus in particular for the normalization function
(see Remark \ref{length-normalization})
$$h'_{\vec v}(t) = ||M^t \vec v ||$$
we want to consider the accumulation points of the values
$$\frac{M^{t}\vec{v}}{h'_{\vec v}(t)} \, .$$
As is true for all 
sequences of type $f^n(x)$ for which for some fixed $k$ the subsequence $f^{kn}(x)$ converges,
the sequence of 
vectors 
$\frac{M^{t}\vec{v}}{h'_{\vec v}(t)}$ 
must accumulate 
(up to rescaling) 
onto the 
finite 
$M$-orbit of $\underset{t\to\infty}{\lim}\frac{M_1^{t}\vec{v}}{h_{\vec v}(k t)}$, 
for $M_1 = M^k$. But from Proposition \ref{PB-eigenvectors} we know that this orbit consists 
(up to rescaling) 
only of a single point. 
Since by definition of $h'_{\vec v}$ we have $\|\frac{M^{t}\vec{v}}{h'_{\vec v}(t)}\| = 1$ for all $t \geq 1$, 
the family of vectors $\frac{M^{t}\vec{v}}{h'_{\vec v}(t)}$ 
must indeed converge.
\end{proof}

\bibliographystyle{alpha}
\bibliography{dynamicsoncurrents}

\end{document}